\documentclass[a4paper,12pt,reqno]{amsart}
\usepackage{latexsym, mathabx}
\usepackage{amsmath,amsthm,amssymb,amscd}
\usepackage{fullpage}
\usepackage{xcolor}
\usepackage{parskip}
\usepackage{mathtools}
\usepackage{thmtools}
\mathtoolsset{showonlyrefs}
\usepackage{csquotes}
\usepackage{comment}
\usepackage{pdfpages}
\usepackage[activate={true,nocompatibility},final,tracking=true,kerning=true,spacing=true]{microtype}
\usepackage[pdfencoding=unicode,pdftex,bookmarks=true,bookmarksnumbered]{hyperref}
\definecolor{citecolour}{rgb}{0.0, 0.0, 0.8}
\colorlet{linkcolour}{green!50!black}
\hypersetup{colorlinks,breaklinks,
		linkcolor=linkcolour,citecolor=citecolour,
		filecolor=linkcolour, urlcolor=linkcolour}
\definecolor{paperblue}{RGB}{20,60,130}
\definecolor{paperpurple}{RGB}{95,55,135}
\usepackage{txfonts,pxfonts,tikz} 
\usepackage{tgschola}
\usepackage[T1]{fontenc}

\newtheorem{prevtheorem}{Theorem}

\newtheorem{theorem}{Theorem}

\newtheorem{proposition}{Proposition}

\newtheorem{lemma}[theorem]{Lemma}
\newtheorem{corollary}{Corollary}

\newenvironment{proofofA}{{\it {Proof of Theorem \ref{Thm:main}.} }}{\hfill $\blacksquare$ \\}
\newenvironment{proofofB}{{\it {Proof of Theorem \ref{thm:Lattices}.} }}{\hfill $\blacksquare$ \\}

\newenvironment{proofof}{{\bf {Proof.} }}{\hfill $\blacksquare$ \\}

\def\Z{\mathbf{Z}}

\def\ZZ{\mathbb{Z}}
\def\RR{\mathbb{R}}
\def\T{\mathbf{T}}
\def\vol{\text{vol}}

\usepackage{graphicx} 

\title{Three subgroup common transversal in abelian groups} 
\author{Maria Loukaki }
\address{Department of Mathematics and Applied Mathematics, University of Crete, Voutes Campus, 70013 Heraklion, Greece}
\email{mloukaki@uoc.gr}

\author{Emmanouil Spyridakis  }
\address{Department of Mathematics and Applied Mathematics, University of Crete, Voutes Campus, 70013 Heraklion, Greece}
\email{manos.ch.spyridakis@gmail.com}

\date{June 2026}

\begin{document}

\begin{abstract}
We completely characterize when three equal-order subgroups of a finite abelian group share a common transversal. Consequently, we determine when three full-rank lattices of equal volume  with a discrete sum  in $\mathbb{R}^d$ admit a common fundamental domain, answering, for $n=3$, an open  question from 1997.
\end{abstract}

\keywords{Finite abelian groups, full-rank lattices, tiling,  common transversals, common fundamental domains, Steinhaus problem}

\subjclass[2020]{20K01, 20D60, 05D15, 52C22, 52B20}

\maketitle

\section{Introduction}

In the 1950s, Steinhaus~\cite{Sie58} asked whether there exists a planar set that {\it tiles} the plane when {\it translated } by any full-rank lattice obtained by rotating the integer lattice around the origin. 
In the language used in this paper, Steinhaus asked if there exists a common transversal $E$ of all full-rank lattices $L_{\theta}= R_\theta\mathbb{Z}^2$, 
where $\theta \in [0, 2\pi)$ and $R_\theta$ denotes rotation by $\theta$ around the origin. That is,  whether there exists a set $E \subseteq \RR^2$ that covers all  cosets modulo each  $L_{\theta}$ exactly once. 
An affirmative answer to the Steinhaus problem  was given only in  this century~\cite{Jackson}. But the variation of the problem  where the subset of the plane  is asked to be Lebesgue measurable 
has sparked much more interest and is still open. The best result up to date  for the measurable problem can be found in~\cite{KW98}. 

The Steinhaus problem extends naturally to $\mathbb{R}^d$ for $d > 2$. A full-rank lattice $L$ in $\mathbb{R}^d$ is a discrete subgroup of $\RR^d$. Every full-rank lattice $L$ in $\mathbb{R}^d$ can be expressed as $A\mathbb{Z}^d$ for some invertible matrix $A$, with volume $\text{vol}(L) = |\det(A)|$. A {\it transversal} (or a {\it fundamental domain} ) of $L=A\mathbb{Z}^d$ in $\mathbb{R}^d$ is a set containing  exactly one representative from each coset of $L$ in $\mathbb{R}^d$.
A common example of a fundamental domain of $L$ is the so-called fundamental parallelepiped of $L=A\ZZ^d$
\begin{equation*}
    P_L = A[0,1)^d
\end{equation*}
 Recent results in~\cite{grepstad2025bounded} and~\cite{spyridakis2026} demonstrate the existence of a bounded and measurable common fundamental domain for two full-rank lattices in $\mathbb{R}^d$ of equal volume.

This raises the question: Does a bounded and measurable common fundamental domain exist for three full-rank lattices of equal volume? Let us remark here that the question of finding a  set $E \subseteq \RR^d$  that is a common transversal for a finite number of full-rank lattices in $\RR^d$ first appeared in~\cite{kol97}. In the same paper Kolountzakis   gave  an example of three full-rank lattices of equal volume that do not admit a common fundamental domain. That being said, these full-rank lattices are not rotations of the full-rank lattice of the integers.  
Furthermore,  in~\cite{kol97} the following problem   was  first posed:  
\begin{center}
When does a  finite set of subgroups  $A_1, \ldots, A_n$ of the same finite index in an abelian group $G$  admit a common transversal? 
\end{center}
It is worth mentioning that when $n=2$, Hall's Marriage Theorem guarantees the existence of a common transversal for two subgroups of the same index in $G$. But when more than two subgroups are involved common transversals  do not always exist. Indeed take $G= \Z_p \times \Z_p$, for any prime $p$, and consider all its subgroups of order $p$. These are $p+1$ in total and their union is equal to $G$. They cannot share a common transversal because no element of $G$ lies outside their union. Thus, any common transversal (which, after normalization, can be assumed to contain the $0$ element of $G$) would contain at least one more element from one of the subgroups.

Nevertheless, it was proved \cite[Thm.~1]{kol97} that if the sum  $A_1+ \ldots +A_n$  of the subgroups, is direct then they always share a common transversal in $G$. (We say that a sum of subgroups  $A_1+ \ldots +A_n$ is direct when every element $a \in A_1 +\cdots  +A_n$ can be written in a unique way as $a= a_i+\cdots +a_n $,  with $a_i \in A_i$).   Consequently it was shown~\cite[Thm.~4]{kol97} that if the full-rank lattices $L_1, \cdots, L_n$ of $\RR^d$ have the same volume and both $\sum L_i$ and $\sum L_i^*$ (the sum of their duals) are direct sums then they have a Lebesgue measurable common transversal, while if only the sum of $L_i$ is direct then a bounded (but not necessarily measurable) common transversal was constructed~\cite[3.2]{kol97}.
The  question that was also  posed in~\cite[3.1]{kol97}, and is  of interest in this paper,  is whether full-rank lattices $L_1, \cdots, L_n$ sharing a common transversal can be characterized when they do not sum directly. 

Partial results for the  first question  were given in~\cite{ALS24} under the hypothesis that  the   subgroups $X, Y, Z$ are all cyclic (see~\cite[Thm. B]{ALS24}) or  that $X, Y, Z$  are all complemented in $G$ (see~\cite[Thm. A]{ALS24}).  
In particular, after factoring out by their intersection, it was proved in~\cite[Thm. B]{ALS24}  that three cyclic subgroups $X, Y$, and $Z$  lack a common transversal in their sum $G$ if and only if they exhibit a structure analogous to the Klein group. That is,  precisely when their Sylow $2$-subgroups $X_2, Y_2, Z_2$ are all cyclic, isomorphic to some $\mathbb{Z}_{2^n}$, and the Sylow $2$-subgroup of $G$ is $G_2 \cong  \mathbb{Z}_{2^n} \times \mathbb{Z}_{2^n}$, the  direct sum of any two of $X_2, Y_2$ and $ Z_2$.


The main result of this paper  fully resolves the problem of the existence of a common transversal for three subgroups of equal order inside a finite abelian group. In particular, we prove the the only problematic scenario where three subgroups $X, Y, Z$ lack a common transversal in $G$ is precisely the one that appeared  in the cyclic case, i.e. when (after factoring out by their intersection) the Sylow $2$-subgroups $X_2, Y_2, Z_2$ are all non-trivial cyclic  and $G_2$ is the direct sum of any two of them. Hence,  the existence of a common transversal for $X,Y,Z$ in $G$ depends solely on the nature and position of their Sylow $2$-sugroups within $G$.
Our main result reads: 
\begin{prevtheorem}\label{Thm:main}
Let $G$ be a finite abelian group with subgroups $X, Y, Z$ of equal index, and let $N = X \cap Y \cap Z$. Then $X, Y, Z$ have no common transversal in $G$ if and only if $G/N$ has even order and the Sylow $2$-subgroup $(G/N)_2 $ of $G/N$ satisfies  
\[
(G/N)_2 = (X/N)_2 \oplus (Y/N)_2 = (X/N)_2 \oplus (Z/N)_2 = (Y/N)_2 \oplus (Z/N)_2,
\]
where $(X/N)_2$, $(Y/N)_2$, and $(Z/N)_2$ are {\em cyclic }  Sylow $2$-subgroups of $X/N$, $Y/N$, and $Z/N$, respectively.
\end{prevtheorem}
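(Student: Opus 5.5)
We first make two reductions that are standard for this problem. If $E$ is a common transversal of $X,Y,Z$ in $G$, then its image in $G/N$ need not be one. Instead we use the other direction. Since $N\le X\cap Y\cap Z$, a common transversal $\bar E$ of $X/N,Y/N,Z/N$ in $G/N$ lifts to a common transversal of $X,Y,Z$ in $G$, by choosing one preimage of each element. Conversely, a common transversal $E$ of $X,Y,Z$ can be pushed down: each coset $g+X$ is a union of $N$-cosets, and the map $E\to G/X$ is a bijection. A counting argument over the fibres, together with Hall's theorem applied to the bipartite structure, should let us replace $E$ by a transversal that is a union of $N$-cosets intersected with a fixed transversal of $N$. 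We will make this precise by noting that the existence of a common transversal is equivalent to the existence of a perfect ``3-dimensional matching'' in the hypergraph whose hyperedges are the triples $(g+X,g+Y,g+Z)$, $g\in G$. This hypergraph depends only on $G/N$, with multiplicity $|N|$.

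So we may assume $N=0$. Since equal index in the same group means equal order, we may also assume $G=X+Y+Z$, because a common transversal in the sum extends to $G$ coset by coset.

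Next comes the primary decomposition. Write $G=G_2\oplus G_{2'}$ and likewise for $X,Y,Z$. Common transversals behave multiplicatively. If $E_2$ is a common transversal of $X_2,Y_2,Z_2$ in $G_2$ and $E_{2'}$ is one for the odd parts, then $E_2+E_{2'}$ is a common transversal in $G$. Conversely, the obstruction must be shown to survive. For that direction we argue by the pigeonhole principle exactly as in the Klein example. Suppose $G_2=X_2\oplus Y_2=X_2\oplus Z_2=Y_2\oplus Z_2$ with all three cyclic of order $2^n\ge 2$. Then $G_2$ has a quotient $G_2/2G_2\cong \Z_2\times\Z_2$ in which the images of $X_2,Y_2,Z_2$ are the three lines. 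Composing $G\to G_2\to \Z_2^2$ gives a map under which $X,Y,Z$ map onto the three distinct order-$2$ subgroups. A common transversal of $X,Y,Z$ would then yield a function on $E$ whose fibre counts over $\Z_2^2$ are forced to satisfy incompatible parity conditions: each pair of classes must be hit equally often modulo each of the three lines. Summing shows that the count at $0$ equals the count at each nonzero point, and also differs from it. We will carry out this counting carefully, using that $|E|=[G:X]$ and the sizes of fibres of $E\to G/X$.

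The main obstacle is sufficiency: constructing a common transversal whenever the Klein-type configuration fails. Here we plan an induction on $|G|$. The odd part is handled by a general lemma: for subgroups of odd order one can find $g$ and a subgroup chain so that $X,Y,Z$ share a common complement-like structure. Concretely, we pick a subgroup $H$ of prime order $p$ contained in exactly one or two of $X,Y,Z$, pass to $G/H$, and lift the transversal, repairing it with a Hall-type matching inside each $H$-coset. The $2$-part is harder. We split into cases: some $X_2$ is noncyclic, or the pairwise sums are not all direct and equal to $G_2$. In each case we look for a cyclic subgroup or direct summand along which $G_2$ fibres so that the inductive hypothesis applies to both base and fibre. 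The genuinely delicate case is when two of the pairwise sums equal $G_2$ but the third does not. There we expect to need an explicit construction, presumably generalizing the cyclic construction of \cite[Thm.~B]{ALS24}, and this is where we anticipate most of the work.
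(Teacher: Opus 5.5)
Your reductions (to $N=0$, to $G=X+Y+Z$, and gluing Sylow-wise common transversals into one) are fine. There are, however, two genuine gaps.

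\textbf{The necessity argument only works in the Klein case.} Let $c_v$ be the number of elements of a common transversal $E$ lying over $v\in\Z_2\times\Z_2$, and write $x,y,z$ for the nonzero points of the three lines. The three line conditions give $c_0+c_x=c_0+c_y=c_0+c_z=|E|/2$. Together with $\sum_v c_v=|E|$ this forces $c_0=c_x=c_y=c_z=|E|/4$. That is contradictory only if $4\nmid |E|=|X_2|\,[G_{2'}:X_{2'}]$, i.e.\ only if $X_2\cong\Z_2$. For $|X_2|\ge 4$ nothing is violated. Take $\Z_4\times\Z_4$ with $X=\langle(1,0)\rangle$, $Y=\langle(0,1)\rangle$, $Z=\langle(1,1)\rangle$, which have no common transversal. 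The set $\{(0,0),(1,2),(2,1),(3,3)\}$ has exactly the fibre counts you require.

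Passing to $G_2/2G_2$ discards precisely the relevant information: the obstruction is of Hall--Paige type and is detected by \emph{summing} elements, not counting them. The paper (Lemma~\ref{Lem:reduction-to-p-groups}) writes the $2$-parts of the elements of $E$ as $a_i=x_i+y_i\in X_2\oplus Y_2$. It shows that every element of $X_2$ and of $Y_2$ occurs exactly $n_q=[G_{2'}:X_{2'}]$ times, an odd number. Hence $\sum_i a_i=n_q(x_0+y_0)$, with $x_0,y_0$ the involutions. The same computation in $X_2\oplus Z_2$ gives $n_q y_0=n_q z_0$, which is impossible since $Y_2\cap Z_2=0$.

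Incidentally, your remark that the image of a common transversal in $G/N$ need not be one is false. Distinct elements of $E$ lie in distinct $X$-cosets, hence in distinct $N$-cosets, so the image always is a common transversal and the hypergraph detour is unnecessary.

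\textbf{The sufficiency half is a plan, not an argument, and its one concrete step fails.} If $H$ of prime order lies in exactly one or two of $X,Y,Z$, then $X+H$, $Y+H$, $Z+H$ no longer have equal order. So neither the inductive hypothesis nor even the notion of a common transversal is available in $G/H$. The ``Hall-type repair'' is never specified, and neither is the odd-order ``lemma''. Three ingredients are missing.
\begin{itemize}
\item[(a)] The base configuration $G=X\oplus Y=X\oplus Z=Y\oplus Z$. Writing $Z=\{x_i+y_i\}$ with $\{x_i\}=X$ and $\{y_i\}=Y$, the common transversals are exactly the sets $\{x_i+\phi(y_i)\}$ with $\phi$ and $y\mapsto\phi(y)-y$ both bijections of $Y$, i.e.\ complete mappings. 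So Hall--Paige decides existence, and this is where odd and non-cyclic $2$-groups succeed.
\item[(b)] An induction on the index that enlarges the three subgroups by \emph{different} subgroups of equal size, so that both the quotient triple and the triple inside the intersection again have equal orders. For example, when $G=X+Y$ and $A=X\cap Y$ is the smallest pairwise intersection, take $X\oplus K$, $Y\oplus L$, $Z\oplus A$ with $K\le Y\cap Z$ and $L\le X\cap Z$ of order $|A|$. These intersect in $K\oplus L\oplus A$, which carries the equal-order triple $L\oplus A$, $K\oplus A$, $K\oplus L$. One then glues transversals as $\{g_i+w_j\}$; the general case uses $X+Y_1$, $Y+X_1$, $Z+X_1+Y_1$ and $X_1$, $Y_1$, $Z\cap(X+Y)$ analogously.
\item[(c)] Explicit constructions, built from Hall's theorem and Hall--Paige-type bijection lemmas, for the cases where one of these smaller triples falls into the Klein configuration (Propositions~\ref{prop.missing-X+Y} and~\ref{prop.missing-X+Y+Z-2}).
\end{itemize}
Your anticipated ``delicate case'' is in the spirit of (c), but you give no method for it, and without (a)--(c) the sufficiency direction has no proof.
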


 If the index $ [G:X]$  is odd, then $ G_2 \leq X_2$ and similarly $G_2 \leq Y_2, Z_2$. Hence $G_2 \leq N$, so $G/N$  has odd order and the obstruction in \autoref{Thm:main} cannot occur. Thus
\begin{corollary}
  Subgroups $X, Y, Z$ of a finite abelian group $G$ with equal odd index always share a common transversal in $G$.  
\end{corollary}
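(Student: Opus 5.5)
The corollary follows directly from \autoref{Thm:main}, and the plan is to show that the obstruction described there cannot occur. The key point is that all the $2$-parts involved sit inside $N$, so that $G/N$ has odd order. The first condition in the theorem, that $G/N$ has even order, then fails, and \autoref{Thm:main} gives a common transversal.

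Let $m=[G:X]=[G:Y]=[G:Z]$, which is odd by hypothesis. Write $G=G_2\oplus G_{2'}$, where $G_2$ is the Sylow $2$-subgroup and $G_{2'}$ is the Hall $2'$-part. Since $G$ is abelian, every subgroup splits the same way, so $X=X_2\oplus X_{2'}$ with $X_2=X\cap G_2$. The index is then multiplicative along this decomposition:
\[
[G:X]=[G_2:X_2]\,[G_{2'}:X_{2'}].
\]
The factor $[G_2:X_2]$ is a power of $2$ and divides the odd number $m$, so $[G_2:X_2]=1$. Hence $G_2=X_2\le X$. The same argument gives $G_2\le Y$ and $G_2\le Z$, so
\[
G_2\le X\cap Y\cap Z=N.
\]

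It follows that $|G/N|$ divides $|G|/|G_2|=|G_{2'}|$, which is odd. So $G/N$ has odd order, the necessary condition in \autoref{Thm:main} fails, and $X,Y,Z$ have a common transversal in $G$.

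The argument has no substantial obstacle once \autoref{Thm:main} is available. The only step needing care is the multiplicativity of the index over the primary decomposition, which holds because $X=X_2\oplus X_{2'}$ with $X_2\le G_2$ and $X_{2'}\le G_{2'}$.
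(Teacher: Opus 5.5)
Your proposal is correct and is the paper's argument: odd index forces $G_2 \le X, Y, Z$, hence $G_2 \le N$, so $G/N$ has odd order and the obstruction in \autoref{Thm:main} cannot occur. Your multiplicativity-of-index step over $G = G_2 \oplus G_{2'}$ just makes explicit the detail the paper states without proof.
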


The proof of the theorem above   reduces to a perfect matching problem,   utilizing abelian group maps that share characteristics similar to those of a {\it complete map}. A bijection  $\theta: G \to G$ is a  complete mapping  of $G$  if the mapping $x \to x\theta(x)$ is also a bijection. It was shown by  Hall and Paige in 1955 (\cite{hall1955complete})
that any finite group with nontrivial cyclic Sylow $2$-subgroup does not admit complete mappings  and by Wilcox-Evans-Bray  in 2009 (\cite{WILCOX20091407}, \cite{EVANS2009105}) that the converse also holds. 
The primary  challenge with the maps needed for the proof of our Theorem A,  lies with the prime $p=2$
 as detailed in Section  \ref{sec:Bijections}.

Let us now turn our attention to full-rank lattices and see how our result on finite groups translates.
Consider three full-rank lattices $K,L,M\subseteq \RR^d$ of equal volume such that $ L+M+K= \ZZ^d $, via a non-singular linear transformation.  In other words,  $K,L,M$ are three full-rank lattices of equal volume with a discrete sum in $\RR^d$. 
Observe that in this case the sublattice $H:= K\cap L\cap M$ is also a full-rank lattice in $\RR^d$. Indeed,   the map
\begin{equation*}
    F: \ZZ^d/H \to  (\ZZ^d/K) \times (\ZZ^d/ L)\times (\ZZ^d/M)
\end{equation*}
defined as  $F(n + H) = (n +K, n+L, n+ M)$ for every $n\in \ZZ^d$ is injective. Thus
\begin{equation*}
    [\ZZ^d:H]=|\ZZ^d/H|\le |\ZZ^d/K| |\ZZ^d/L| |\ZZ^d/M|= \vol(K) \vol(L) \vol(M)<\infty
\end{equation*}
since $K,L,M$ are full-rank lattices.

Using  \autoref{Thm:main} now we get 
\begin{prevtheorem}\label{thm:Lattices}
    Let $K,L,M$ be three full-rank lattices on $\RR^d$ of equal volume such that $K+L+M$ is a full-rank lattice. Denote their intersection by $H= K\cap L\cap M$ and the quotient groups $X:=K/H$, $Y:= L/H$, $Z:= M/H$ inside $G:= (L+M+K)/H$. Then $K,L,M$ do not share a common fundamental domain in $\RR^d$ if and only if the  Sylow $2$-subgroups of $X,Y,Z$  are non-trivial cyclic, pairwise disjoint satisfying 
    $G_2 = X_2 \oplus Y_2= X_2 \oplus Z_2 = Y_2 \oplus Z_2$. 
\end{prevtheorem}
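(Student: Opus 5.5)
The plan is to reduce Theorem~\ref{thm:Lattices} to Theorem~\ref{Thm:main}, applied to the finite abelian group $G=(K+L+M)/H$ with subgroups $X,Y,Z$. Set $\Lambda:=K+L+M$. This is a full-rank lattice, and $H$ has finite index in it by the argument given before the statement. The lattices $K,L,M$ have equal volume, so $[\Lambda:K]=[\Lambda:L]=[\Lambda:M]$, and therefore $X,Y,Z$ have equal index in $G$. Also $X\cap Y\cap Z=(K\cap L\cap M)/H$ is trivial, so in Theorem~\ref{Thm:main} we have $N=0$ and $G/N=G$. Theorem~\ref{Thm:main} then says: $X,Y,Z$ have no common transversal in $G$ if and only if $X_2,Y_2,Z_2$ are cyclic, $G$ has even order, and $G_2$ is the direct sum of any two of them. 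The condition $G_2=X_2\oplus Y_2$ with $|G_2|>1$ forces each $X_2,Y_2,Z_2$ to be nontrivial and pairwise disjoint (they meet trivially). So the obstruction in Theorem~\ref{thm:Lattices} is exactly the obstruction of Theorem~\ref{Thm:main}. It remains to prove that $K,L,M$ have a common fundamental domain in $\RR^d$ if and only if $X,Y,Z$ have a common transversal in $G$.

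\emph{Step 1: group transversal gives a fundamental domain.} Let $T\subseteq \Lambda$ be a set of representatives of a common transversal of $X,Y,Z$ in $G=\Lambda/H$. Let $P$ be a fundamental domain of $\Lambda$ in $\RR^d$, for instance $P_\Lambda$, and set $E:=T+P$. Take $v\in\RR^d$ and write it uniquely as $v=\lambda+p$ with $\lambda\in\Lambda$, $p\in P$. A point $t+p'\in E$ is congruent to $v$ mod $K$ exactly when $p'=p$ and $t\equiv\lambda \pmod K$, since $K\subseteq\Lambda$. Because $T$ meets each coset of $K/H$ in $\Lambda/H$ exactly once, it meets each coset of $K$ in $\Lambda$ exactly once. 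Hence $E$ is a fundamental domain for $K$, and the same argument works for $L$ and $M$.

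\emph{Step 2: fundamental domain gives a group transversal.} Suppose $E$ is a common fundamental domain of $K,L,M$. Pick a coset $v+\Lambda$ with $v\in\RR^d$. Using $K\subseteq\Lambda$, each coset of $K$ inside $v+\Lambda$ contains exactly one point of $E$. So $E\cap(v+\Lambda)$ is a transversal of $K$ in $v+\Lambda$, and likewise of $L$ and $M$. Translating by $-v$ gives a set $S\subseteq\Lambda$ that is a common transversal of $K,L,M$ in $\Lambda$.

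The main obstacle is passing from $\Lambda$ to $\Lambda/H$. The set $S$ is infinite and need not project injectively onto $\Lambda/H$, so reducing mod $H$ does not immediately yield a transversal. To handle this, count with multiplicity. Define $f:\Lambda/H\to\mathbb{Z}_{\ge0}\cup\{\infty\}$ by letting $f(g)$ be the number of points of $S$ in the coset $g$. For each coset $c$ of $X$ in $G$, its preimage in $\Lambda$ is a union of $|X|$ cosets of $H$, which is exactly one coset of $K$. Hence $\sum_{g\in c}f(g)=1$. Therefore $f$ is the indicator function of a set $T$ with $|T\cap c|=1$ for every coset $c$ of $X$, and similarly for $Y$ and $Z$. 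Thus $T$ is a common transversal of $X,Y,Z$ in $G$.

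This settles both directions. Applying Theorem~\ref{Thm:main} with $N=0$ then gives the stated characterization.
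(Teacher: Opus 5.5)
Your argument is correct and follows the same route as the paper: restrict to $\Lambda=K+L+M$, pass to the finite quotient $\Lambda/H$, and apply Theorem~\ref{Thm:main} with $N=0$. The only difference is that you prove inline, via $T+P_\Lambda$ and restriction to a coset $v+\Lambda$, what the paper obtains by citing Lemmas~\ref{Lem:G=X+Y+Z} and~\ref{lem:factor}. One correction: the ``main obstacle'' in Step~2 does not exist, since $S$ is a transversal of $K$ in $\Lambda$ and so has exactly $[\Lambda:K]<\infty$ elements, and because $H\le K$ it meets each coset of $H$ at most once and hence projects injectively onto a common transversal in $\Lambda/H$; your counting argument is valid but merely rederives this.
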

Finally, let us remark that if $K, L$, and $M$ share a common fundamental domain within the full-rank lattice $K+L+M$, then a common fundamental domain for $K, L$, and $M$ in $\mathbb{R}^d$ can be constructed as a finite union of polytopes. Specifically, if $F$ is a finite common transversal of $K, L$, and $M$ in $K+L+M$, then $F+P$, where $P$ is a fundamental parallelepiped of the full-rank lattice $K+L+M$, serves as such a common fundamental domain in $\mathbb{R}^d$ and is a finite union of polytopes.\\

Throughout this paper, $G$ denotes a finite abelian group, and $X, Y, Z$ are subgroups of $G$ of the same order,  whose common transversals are the subject of investigation. We denote by  $\T_G(X, Y, Z)$ the set of common transversals of $X, Y, Z$ in $G$. We also write $G_p$ for the (unique as $G$ is abelian) Sylow $p$-subgroup of $G$ for any prime $p $ that divides the order $|G|$.

This paper is structured as follows:  Section \ref{sec:Bijections} establishes necessary bijections for subsequent proofs. We make some reductions and provide the notation we will be using in Section \ref{sec:Reductions}. Sections \ref{sec:DirectSum}, \ref{sec:sumOfTwo}, and \ref{sec:GeneralCase} focus on proving first some special cases and ultimately the general  case  of our  main result (Theorem A) for abelian $p$-groups. This approach, on $p$-groups instead of general abelian groups,  leverages the bijections from Section  \ref{sec:Bijections} and allows order inequalities to translate into divisibility relations, ensuring the existence of subgroups of the desired order. Specifically, Theorem A is proven for the special case of an abelian $p$-group $G = X \oplus Y$ in Section \ref{sec:DirectSum}, and for $G = X+Y$ in  Section  \ref{sec:sumOfTwo}. The proof of Theorem A for a general $p$-group $G = X+Y+Z$ is presented in Section \ref{sec:GeneralCase}.  In Section \ref{sec:ThmA} we complete the proofs of Theorem A and B by demonstrating that three subgroups $X, Y, Z$ in a finite abelian group $G$ share a common transversal if and only if their Sylow $p$-subgroups $X_p, Y_p, Z_p$ share a common transversal in $G_p$ for every prime divisor $p$ of $|G|$. Finally, Section  \ref{sec:Lattices} provides examples of full-rank lattices that do not share a common transversal.

\section{Bijections}\label{sec:Bijections}
While common transversals exhibit algebraic structure in special cases, such as when the underlying subgroups are complemented, their existence generally reduces to a perfect matching problem, and as such have been treated in this paper. 
This section serves exactly this purpose; to
provide all the required machinery, through bijections, that will be needed in the
forthcoming proofs. 
The starting point, being Hall's Theorem~\cite{Hall1952ACP} according to which if $A=\{ a_i \mid i \leq n\} $  is an abelian finite group of  order $n$ and $\{ b_1, \cdots ,b_n\} $  are elements of $G$ (not necessarily distinct )  with $b_1+ \cdots +b_n=0$
 then there exists a permutation $\sigma \in S_n$ with 
 \[ \{ a_{\sigma(i)}+b_i \mid  i\leq n \}=A = \{a_1, \cdots , a_n\}.
 \]
 Additionally, we recall Hall-Paige Theorem (we only need it here for abelian groups)  according to which every finite group $G$  admits a complete map (i.e.,  a bijection $\theta: G \to G$ with $x+\theta(x)$ being also a bijection) if and only if $G$ has a trivial or non-cyclic Sylow $2$-group.
For a detailed analysis and the history of the Hall-Paige Theorem  one might see~\cite{Evans2018}. 
The maps presented in this section, are based on Hall's or Hall-Paige theorem and share a similar structure. But they  are weaker forms of complete maps, with cyclic $2$-groups often presenting a challenge.

Let us note  that for odd order groups, all the required maps constructed below  could simply replaced by the bijection $\phi: G \to G$ where $\phi(x) = -x$. However, the prime 2 necessitates more complex constructions. Therefore, while keeping in mind the simplified bijection available for odd order groups, we state and prove the lemmas and propositions of this section in a general way, regardless of parity.

Our first result of this type, that is actually a straightforward application of Hall's theorem, is the following.
\begin{lemma}\label{lem:2-bijections}
Assume $G$ is an abelian $p$-group  and let $1 \neq  H  < G$ be a proper  subgroup of $G$ and  $S $ a transversal of $H$ in $G$, so that every $g \in G$ is written uniquely as $g = s + h$ for some  $s \in S $ and $h  \in H$. Then there exists a bijection $\psi: G \to G $  so that $\psi(s + h)-h$ is also a bijection of $G$. 
If $G $ is not a cyclic $2$-group,  then there exists a bijection  $\phi:G  \to G $ so that $\phi(s +h )-s $ is also a bijection of $G$.    
\end{lemma}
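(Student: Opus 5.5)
Both statements follow from a single application of Hall's theorem to the whole group $G$. We index the elements of $G$ by themselves and write each $g\in G$ uniquely as $g=s(g)+h(g)$ with $s(g)\in S$ and $h(g)\in H$. For the first map we take the multiset $\{b_g\}_{g\in G}$ with $b_g=-h(g)$. For the second we take $b_g=-s(g)$. Suppose $\sum_{g\in G} b_g=0$. Then Hall's theorem~\cite{Hall1952ACP} gives a permutation $\sigma$ of $G$ such that $\{\sigma(g)+b_g \mid g\in G\}=G$. Setting $\psi(g)=\sigma(g)$ (respectively $\phi(g)=\sigma(g)$), the map $g\mapsto \psi(g)-h(g)$ (respectively $g\mapsto\phi(g)-s(g)$) is a bijection of $G$, which is exactly the claim. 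So everything reduces to checking that the relevant sums vanish.

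We use the standard fact about the sum of all elements of a finite abelian group $A$. This sum $\Sigma A$ is $0$ unless the Sylow $2$-subgroup of $A$ is nontrivial cyclic, in which case it equals the unique involution. The reason is that elements pair with their inverses, so only elements of order $\le 2$ survive, and those form an elementary abelian $2$-group whose sum is $0$ unless it has order $2$. For a $p$-group this means $\Sigma A=0$ unless $A$ is a nontrivial cyclic $2$-group.

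For $\psi$: each $h\in H$ occurs as $h(g)$ exactly $|S|=[G:H]$ times, so $\sum_g h(g)=[G:H]\,\Sigma H$. If $\Sigma H=0$ we are done. Otherwise $H$ is a nontrivial cyclic $2$-group, so $p=2$ and $\Sigma H$ is an involution $t$. Since $H$ is proper, $[G:H]$ is a positive power of $2$, hence even, and $[G:H]\,t=0$. Thus $\sum_g b_g=0$, which gives $\psi$.

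For $\phi$: we avoid computing $\Sigma S$ directly, because $S$ is an arbitrary transversal, and use instead $s(g)=g-h(g)$. This gives $\sum_g s(g)=\Sigma G-[G:H]\,\Sigma H$. The second term is $0$ by the previous paragraph. The first term is $0$ precisely because $G$ is not a cyclic $2$-group, by the fact above. Hence $\sum_g b_g=0$, and Hall's theorem yields $\phi$. The only delicate point in the whole argument is the prime $2$: recognising that the index must absorb the involution $\Sigma H$ (this uses that $H$ is proper), and that the hypothesis on $G$ is exactly what kills $\Sigma G$. The assumption $H\neq 1$ is not needed in this argument.
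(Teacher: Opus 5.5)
Your proof is correct and follows essentially the paper's route: apply Hall's theorem to the multiset of $-h(g)$ (respectively a multiset built from the $s(g)$), and check the zero-sum condition using $[G:H]\,\Sigma H=0$ (because $H$ is proper) and $\Sigma G=0$ (because $G$ is not a cyclic $2$-group). The only cosmetic difference is that you take $b_g=-s(g)$ directly, whereas the paper applies Hall to $+s_i$ and then sets $\phi=f'+s$; your remark that $H\neq 1$ is never used is also accurate for the paper's argument.
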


\begin{proof}
Let $S= \{s_i \mid i \leq n\} $ and   $H = \{ h_j \mid j \leq |H| \}$. For every element in $g_{i,j} \in G $,   we write  $g_{i,j} = s_i+ h_j $ with $s_i \in S$ and $ h_j \in H$ uniquely determined. Let $B = \{ b_{i, j} = -h_j \mid i\leq n, j\leq |H| \}$ be the sequence where each $-h_j \in H$ appears $[G:H]$ times. Then
\[
\sum_{\substack{i \leq n, \\ j \leq |H|}} {b_{i,j}} = [G:H] \sum_{j \leq |H|} (-h_j) = [G:H] \sum_{j \leq |H|} h_j= 0.
\]
The last equality holds because $\sum_{j  \leq |H|} h_j = 0$, unless H is a cyclic $2$-group, in which case the sum is its unique involution. But then  $[G:H] \sum_{j \leq |H|} h_j = 0$ since $[G:H] \geq 2 $ is an even number.
Now we apply Hall's Theorem (Main Theorem in~\cite{Hall1952ACP}) to the set $B$. Hence there exists a bijection $\psi :G \to G $ so that 
\[
\psi(g_{i,j})+b_{i,j}= \psi(g_{i,j}) - h_j, 
\]
is a bijection of $G$ (where $g_{i,j}= s_i +h_j$). 

Assume now that $G$ is not a cyclic $2$-group, and  consider the sequence  $B'= \{ b'_{i, j} = s_i \mid i\leq n, j\leq |H| \}$. Observe that $\sum_{\substack{i \leq n, \\ j \leq |H|}} g_{i, j} = 0$, because $G$ is not a cyclic $2$-group. Hence 
\[
0=\sum_{i, j} g_{i, j}= \sum_{i,j} s_i+h_j= |H|\sum_{i \leq n } s_i +[G:H] \sum_{j \leq |H|} h_j= |H|\sum_{i \leq n } s_i.
\]
The last equality holds because $\sum_{j \leq |H| } h_j= 0$ unless $H$ is a cyclic $2$-group, in which case the sum is its unique involution. In the latter case, $[G:H] \sum_{j \leq |H|} h_j = 0$ since $[G:H] \geq 2  $  is even. 
Therefore,  
\[
\sum_{i \leq n, j \leq |H|} b'_{i,j}= |H| \sum_{i \leq n} s_i = 0.
\]
Now we apply the main Theorem in~\cite{Hall1952ACP} to the set $B'$. Hence there exists 
a bijection $f':G \to G$ so that 
\[
f'(g_{i,j}) +b'_{i,j}= f'(g_{i,j}) +s_i
\]
is a bijection of $G$ (where $g_{i,j}= s_i+h_j$).

Then the function  $\phi : G \to G$ defined as 
\[
\phi(g_{i,j})= f'(g_{i,j})+s_i,
\]
 is clearly the desired bijection. 
This completes the proof of the lemma.
\end{proof}

Let us note that the requirement of the second part of the lemma that wants   $G$ to be  a non-cyclic $2$-group is essential; if $G$ is a cyclic $2$-group, there is no bijection $\phi:G\to G$ with $    \phi(s+h) - s$ being also a bjection of $G$. 
 Indeed, a necessary condition for such a bijection to exist is that
\begin{equation*}
    \sum_{g\in G}g=\sum_{s\in S, h\in H} (\phi(h+s)-s)= \sum_{s\in S, h\in H} \phi(h+s) - \sum_{s\in S, h\in H} s= \sum_{g\in G} g + |H| \left(\sum_{s\in S} s\right)
\end{equation*}
where the last equality holds because $\phi$ is a bijection on $G$. Thus, we must have that
\begin{equation*}
    |H| \left(\sum_{s\in S} s\right)=0
\end{equation*}
Since $G$ is a cyclic $2$-group and $H$ a proper subgroup, $G/H$ is a non-trivial cyclic $2$-group. Consequently, $\sum_{s\in S}s$, where $S$ is a complete system of representatives, represents the unique coset of order $2$ in $G/H$. Every element in this coset has order $2|H|$, thus $|H| \left(\sum_{s\in S} s\right)\neq 0$. This contradiction establishes that the  requirement of a non-cyclic 2-group $G$ is necessary for the second part of the lemma.

Our next result reads:
\begin{lemma}\label{lem:bijectionInQuotient}
Assume $G$ is an abelian group $p$-group   and $1< A \leq  B \leq   G$  subgroups of $G$. Let $\{ g_i  \mid i \leq n  \} $ a transversal of $B $ in $G$.  Assume further that $ \{ b_j \mid j \leq r\} $  is a transversal of $A $ in $B$ and  $A = \{ a_t \mid t \leq s\}$.   Then there exists a bijection $f : G \to G$  so that for any fixed $t \leq s$ we have 
\[
f(g_i +b_j +a_t)- b_j  \not \equiv f (g_{i'} +b_{j'}+a_t) -b_{j'}  \pmod A
\]
for all $(i, j) \neq (i', j')$.  
\end{lemma}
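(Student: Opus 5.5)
The plan is to reduce the statement to a transversal (Hall-type) problem in the quotient $\bar G = G/A$, and then lift it to $G$ by translating along $A$.

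\textbf{Reformulation.} Every $g\in G$ is written uniquely as $g=g_i+b_j+a_t$. So $f$ is determined by choosing, for every triple $(i,j,t)$, an image $f(g_i+b_j+a_t)$.

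For fixed $t$ there are $nr=[G:A]$ pairs $(i,j)$. Hence the required condition says exactly that the map
\[
(i,j)\longmapsto f(g_i+b_j+a_t)-b_j+A
\]
is a bijection onto $\bar G$.

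Fix lifts $c_x\in G$ for the cosets $x\in\bar G$. Suppose we have a bijection $\bar\psi:\bar G\to\bar G$ such that $\bar\psi(\bar g_i+\bar b_j)-\bar b_j$ is again a bijection of $\bar G$. Then we put
\[
f(g_i+b_j+a_t)=c_{\bar\psi(\bar g_i+\bar b_j)}+a_t .
\]
This $f$ is a bijection of $G$, since it permutes the cosets of $A$ and translates within them. It also satisfies the required incongruence for every $t$ simultaneously. So the main step is to produce $\bar\psi$.

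\textbf{Producing $\bar\psi$ when $A<B<G$.} Here $\bar B=B/A$ is a proper nontrivial subgroup of the $p$-group $\bar G$, and $\{\bar g_i\}$ is a transversal of it. The first part of Lemma \ref{lem:2-bijections}, applied to $\bar G$ with $H=\bar B$ and $S=\{\bar g_i\}$, gives exactly such a $\bar\psi$. That first part carries no cyclicity hypothesis.

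\textbf{The case $A=B$.} Then $r=1$ and we may take $b_1=0$. The identity map works, because the elements $g_i+a_t$ are pairwise incongruent mod $A$ for $i\ne i'$.

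\textbf{The case $B=G$ (main obstacle).} Now $n=1$ and we need $\bar\psi$ on $\bar B$ with $\bar\psi(\bar b)-\bar b$ bijective. By Hall's theorem this works whenever $\sum_{\bar b\in\bar B}\bar b=0$, that is, unless $\bar B$ is a nontrivial cyclic $2$-group. In that exceptional case no such $\bar\psi$ exists on $\bar G$ itself (already $\bar G\cong\ZZ_2$ fails), so the choice must depend on $t$.

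\textbf{Handling the exceptional case.} Here I would use that $A\neq 1$, so $s\ge 2$ and, since $p=2$, $s=|A|$ is even. For each $t$ choose a bijection $\tau_t$ of $\bar G$ and set
\[
f(b_j+a_t)\in \bar b_j+\tau_t(\bar b_j).
\]
The $t$-wise condition then holds automatically. The remaining requirement is that every coset of $A$ receive exactly $|A|$ images in total; after that, the images can be made distinct inside each coset.

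A natural choice is to take $\tau_t$ to be $-\mathrm{id}$ composed with translations by elements $d_t$. Pairing the indices $t$ in twos (possible because $s$ is even) should then allow the parity obstruction $\sum\bar b\neq 0$ to cancel.

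An alternative is to apply Hall's theorem directly in $G$, rather than in $\bar G$, to the sequence $-b_j$, with each term repeated $s$ times. Its sum is $s\sum_j b_j$, and one would check that this is $0$ using that $s$ is even. The cost is that one must then still arrange the incongruences mod $A$.

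I expect this exceptional case, where $B=G$ and $G/A$ is a cyclic $2$-group, to be the real difficulty. All other cases follow directly from Lemma \ref{lem:2-bijections}.
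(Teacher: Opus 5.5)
Your reformulation is correct, and so are the cases you actually carry out. For $A=B$ the identity works. For $A<B<G$, the first part of Lemma~\ref{lem:2-bijections} applied to $G/A$, $\bar B$, $\{\bar g_i\}$ gives a $t$-independent $f$. This is a cleaner route than the paper's, which uses Hall--Paige on $B/A$ and switches to a $t$-dependent construction whenever $B/A$ is a cyclic $2$-group, even when $B<G$. Hall's theorem likewise settles $B=G$ when $G/A$ is not a nontrivial cyclic $2$-group.

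The gap is the one remaining case, $B=G$ with $G/A\cong\Z_{2^m}$, $m\ge 1$. This is precisely where the lemma has content, and neither of your suggestions works there.
\begin{itemize}
\item If $\tau_t$ is $-\mathrm{id}$ composed with a translation, then $\bar b+\tau_t(\bar b)$ does not depend on $\bar b$. So for each fixed $t$ all $r$ pairs $(j,t)$ are sent to one coset, and the counting condition forces $r\mid s$. This fails e.g.\ for $G=\Z_8$, $A=\{0,4\}$, where $r=4$ and $s=2$.
\item For the Hall-in-$G$ alternative, $\sum_j b_j\equiv 2^{m-1}b_0\pmod A$ for a generator $\bar b_0$ of $G/A$. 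Hence $s\sum_j b_j=\tfrac{s}{2}\,2^m b_0$, which need not vanish: it equals $4\neq 0$ in the same example. Even when Hall's theorem does apply, it gives no information modulo $A$ for fixed $t$.
\end{itemize}

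What is missing is a concrete choice of the $\tau_t$ that balances the cosets. Split $A=A_1\sqcup A_2$ with $|A_1|=|A_2|=s/2$, which is possible since $s$ is even. Take $\tau_t=\mathrm{id}$ for $a_t\in A_1$ and $\tau_t=\mathrm{id}-\bar b_0$ for $a_t\in A_2$. Then $f(b_j+a_t)$ must lie in the coset $2\bar b_j$, respectively $2\bar b_j-\bar b_0$. In $\Z_{2^m}$ the map $x\mapsto 2x$ hits every even multiple of $\bar b_0$ exactly twice, and $x\mapsto 2x-\bar b_0$ hits every odd multiple exactly twice. So each coset of $A$ receives exactly $2\cdot\frac{s}{2}=s$ pairs $(j,t)$, and your own reformulation then completes the proof. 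This is exactly the paper's construction; it makes the bijection inside each coset explicit via a bijection $\theta:A_1\to A_2$ and the element $d=2^m b_0\in A$.
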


\begin{proof}
Assume first that $A=B$ and so $r=1$ and $\{b_j\}= \{0\}$. 
Then the identity map $f:G \to G$ with $f(x)=x $ satisfies the lemma. Indeed for any fixed $t \leq s$ we get $f(g_i+a_t) = g_i +a_t \equiv g_i \pmod A$. Since $\{g_i \mid i \leq n\}$  is a transversal of $A=B$ in $G$, the cosets  $g_i +A$ are distinct.  Hence the required inequivalence holds for all $i \neq i'$. 

Hence we may   assume henceforth  that  $A \neq B$.
If $B/A $ is not a cyclic $2$-group, denote its elements by $\{ \bar{b}_j \mid j \leq r\}$.   Then $B/A$ satisfies the Hall Paige theorem and thus  there exists  a permutation $\sigma \in S_r$ that simultaneously satisfies  $\{ \bar{b}_{\sigma(j)} \mid j \leq r  \} = B/A$ and  $\{ \bar{b}_{\sigma(j)} - \bar{b}_j \mid j \leq r \}= B/A$.
  If $A= \{a_t \mid t \leq s\} $ then every element of $G$ is written uniquely as $g_i+b_j +a_t$ for some $i, j, t$. We  define $f:G \to G $ as 
  \[
f(g_i+b_j +a_t) = g_i +b_{\sigma(j)} +a_t, 
\]
for all $i \leq n, j \leq r $ and $t \leq s$. 
Clearly $f$ is a bijection of $G$, as  $\sigma $ is a permutation.  Furthermore, 
if $f(g_i +b_j +a_t )- b_j  \equiv f(g_{i'} +b_{j'} +a_t) -b_{j'}  \pmod A$ then 
$g_i +b_{\sigma(j)} -b_{j}  \equiv g_{i'} +b_{\sigma(j')} - b_{j'}  \pmod A$ forcing $i=i'$  (because  $g_i -g_{i'}  \in B$).  Hence we get $b_{\sigma(j)} -b_{j}  \equiv b_{\sigma(j')} - b_{j'} \pmod A$ and thus $\bar{b}_{\sigma(j)} - \bar{b}_j = \bar{b}_{\sigma(j')} - \bar{b}_{j'}$ contradicting the choice of $\sigma$.  So  the lemma follows in this case.

We now consider the remaining  case where $B/A$ is a non-trivial  cyclic $2$-group. Assume $B/A$ is cyclic of order $r=2^m$, generated by $\bar{b}_0$ for some $b_0 \in B$. So 
$B/A = \{  j \bar{b}_0 \mid j \leq r\}$ and $j b_0$ are all distinct modulo $A$, for $j \leq 2^m$, while $d:= 2^m b_0\in A$ (so $d$ is one of the $a_t$). Next, we partition $A$ into two disjoint subsets arbitrarily, $A_1$ and $A_2$, each of size $s/2$ (since $|A|$ is even). Let $\theta: A_1 \to A_2$ be any bijection. 

Consider the map $f : G \to G $ defined as 
\[
f(g_i+j b_0 +a_t)= \begin{cases}
   g_i+ 2j\, b_0 +\theta(a_t) +d,   &\text{for  }  1\leq j \leq 2^{m-1}, \, \,   a_t \in A_1\\
  g_i+ 2j\, b_0 +a_t,  &\text{for  }  2^{m-1} +1\leq j \leq 2^{m}, \, \,   a_t \in A_1\\
   g_i+(2j-1)b_0 +\theta^{-1}(a_t) +d,   &\text{for  }  1\leq j \leq 2^{m-1}, \, \,   a_t \in A_2\\
  g_i+ (2j-1)\, b_0 +a_t,  &\text{for  }  2^{m-1} +1 \leq j \leq 2^{m}, \, \, a_t \in A_2  
\end{cases} 
\]
for every $i \leq n$. 
We show that $f$ is surjective. 
So let $g\in G$. Since the set $\{g_i + jb_0 \: | \: i\le n, j\le r\}$ is a transversal of $A$ in $G$, the element $g$ is uniquely written in the form of:
\begin{equation*}
    g:= g_i + jb_0 + a
\end{equation*}
for some $i\le n, j\le r$ and $a\in A$. As  $d\in A$, the set $\{ a_t + d\: | \: t\le s\} = A$ and so 
\begin{equation}\label{eq:a}
    a:= a_t + d
\end{equation}
for exactly one $t\le s$. 

Let's first assume that $j\in \{1,...,2^m\}$ is even. 
If $a_t \in A_1$ then the  image of $g_i + (2^{m-1}+\frac{j}{2})b_0 + a_t \in G$ via $f$ is  
\begin{equation*}
    f(g_i + (2^{m-1}+\frac{j}{2})b_0 + a_t) = g_i +(2^m + j) b_0 +a_t = g_i + jb_0 + a_t+d= g_i + jb_0 + a= g.
\end{equation*}
If $a_t \in A_2$, there exists $a_{t'} \in A_1$ with $t'\le s$ such that $\theta(a_{t'}) = a_t$. Then
\[
f(g_i + \frac{j}{2} b_0 + a_{t'}) = g_i + jb_0 + \theta(a_{t'}) +d = g_i + jb_0 + a_t +d = g.
\]
Similarly we work for $j$ is odd. If  $a_t \in A_1$   we get 
\begin{align}
  f(g_i + \frac{j+1}{2}b_0 + \theta(a_t)) &=  g_i +(2\frac{j+1}{2} -1) b_0 +\theta^{-1}(\theta(a_t)) +d \\ &= g_i + jb_0 + a_t+d= g_i + jb_0 + a= g. 
\end{align}
While if $j$ is odd, and $a_t \in A_2$ then 
\[
 f(g_i + (2^{m-1}+\frac{j+1}{2})b_0 + a_t) =  g_i +(2^m +j) b_0 +a_t = g. 
  \]
  Hence $f$ is surjective and thus a bijection.
\begin{figure}[h]
\includegraphics[width=0.9\textwidth]{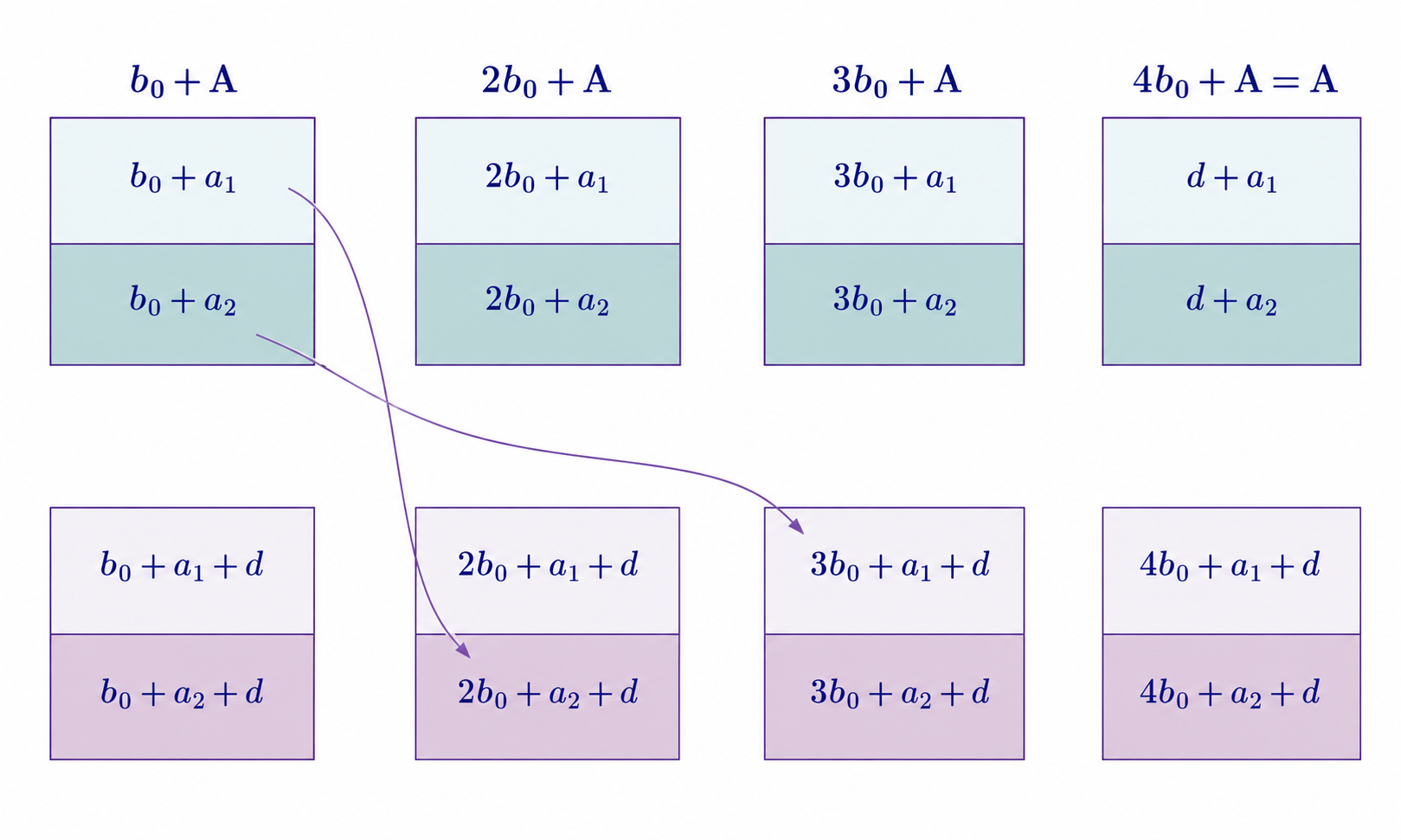}
\caption{The function $f$ matches the elements of each coset of $A=\{ a_1, a_2\}$ in the case where $G= B=\{ jb_0 + a_t \: | \: j\le 4, t\le 2\}$. The set $A_1:= \{a_1\}$,  $\theta(a_1)=a_2$ and $4b_0=d \in A$.}
\label{fig:myfigure}
\end{figure}

To prove the stated inequivalence modulo $A$,  we fix  $t\le s$ and  assume first  that $a_t\in A_1$. Then for every $i\le n$ and $ j\le r$ we have 
\begin{equation*}
    f(g_i +jb_0 +a_t)- jb_0 \equiv g_i +jb_0  \pmod A.
\end{equation*}
This implies that for each fixed $t$, with  $a_t \in A_1$, the set $\{f(g_i + jb_0 +a_t)- j b_0\: | \: i\le n, j\le r\}$ is a transversal of $A$ in $G$ and thus 
\begin{equation*}
    f(g_i +jb_0 + a_t) - jb_0 \not \equiv  f (g_{i'} +b_{j'}+a_t) -b_{j'}  \pmod A
\end{equation*}
for $(i, j) \neq (i', j')$, as desired.

Similarly if $a_t \in A_2$ then 
\begin{equation*}
    f(g_i + jb_0 +a_t) - jb_0 \equiv g_i + (j-1)b_0   \pmod A
\end{equation*}
for  all $i\le n$ and $ j\le r$. The set $\{g_i + (j-1)b_0 \: | \: i\le n , j\le r\}$ forms also  a transversal of $A$ in $G$, ensuring the inequivalence
\[
 f(g_i +jb_0 + a_t) - jb_0 \not \equiv  f (g_{i'} +j'b_0+a_t) -j'b_0  \pmod A
\]
for $(i, j) \neq (i',j')$. This completes the proof of the lemma.

\end{proof}

Let us remark that it is crucial in \autoref{lem:bijectionInQuotient} that $A$ is a non-trivial subgroup of $G$. For example, if $A=\{0\}$ and $B=G$ is a cyclic $2$-group the function sought by the lemma is a complete map. However, cyclic $2$-groups are known not to admit one.

   Furthermore, the assumption that $G$ is a $p$-group cannot be replaced by an  arbitrary abelian group; take for example the group $G=B=\Z_6$ and $ \Z_3 \cong A < B$. In this case $\{ b_j \mid j \leq 2\}=\{0, 1\} $ and the conclusion of the lemma states that for every $a \in A=\{0, 2, 4\} $ we should have $f(a) +A \neq f(a+1)-1 +A$ for some bijection $f: G \to G$. But  $G/A$ has order 2 and so the conclusion of the lemma equivalently translates to:
   $f(a) \in A$ if and only if $f(a+1) \in A$. 
   As $|A|=3$  one can see that this is impossible (because of parity)  for any  bijection $f$.

We complete the section with the following proposition.

\begin{proposition}\label{prop:f-transversal}
Let $G$ be an abelian $p$-group.    Assume $A\oplus B \leq G$ with $|A|=|B|=r>1$ and $B=\{ b_j \mid j \leq r\}$. Assume further that  $\{g_i \mid i \leq n \}$  a transversal of $A\oplus B$ in $G$. Then there exists a function $f : G \to G$  such that 
\begin{align*}
f(g_i+b_j ) &\not \equiv f(g_{i'}+b_{j'}) \pmod A \\
f(g_i+b_j )-g_i &\not \equiv f(g_{i'}+b_{j'})-g_{i'} \pmod B 
\end{align*}
 for all $(i, j ) \neq (i', j') $.
\end{proposition}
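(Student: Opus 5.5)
The plan is to restate both conditions as transversal conditions and then build $f$ coset by coset. Write $H=A\oplus B$, so $|G|=nr^2$. The set $T=\{g_i+b_j\}$ has $nr=[G:A]$ elements, and its elements lie in distinct cosets of $A$, since $g_i$ runs over $G/H$ and $b_j$ over $H/A\cong B$. The statement therefore asks for a map $u\colon T\to G$, with $u(g_i+b_j)=f(g_i+b_j)$, such that:
\begin{enumerate}
\item $\{u(g_i+b_j)\}$ is a transversal of $A$ in $G$;
\item $\{u(g_i+b_j)-g_i\}$ is a transversal of $B$ in $G$.
\end{enumerate}
Since $f$ only matters on $T$, I will define it there and extend arbitrarily.

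\textbf{Ansatz.} First I would try
\[
u(g_i+b_j)=h(g_i)+b_j+a_{\sigma(i,j)},\qquad h(g_i)\in G,\ a_{\sigma(i,j)}\in A,
\]
and decouple the two conditions using the decomposition $G/H\times H$. Modulo $A$ the $a$-term disappears, so condition (1) reduces to $h(g_i)$ being pairwise distinct modulo $H$: the $b_j$ then fill out each coset of $H$. Modulo $B$ the $b$-term disappears, so $u-g_i\equiv h(g_i)-g_i+a_{\sigma(i,j)} \pmod B$. Condition (2) then reduces to two requirements:
\begin{enumerate}
\item $h(g_i)-g_i$ are pairwise distinct modulo $H$;
\item for each fixed $i$, the map $j\mapsto a_{\sigma(i,j)}$ is a bijection onto $A$ (for instance $\sigma(i,j)=j$ after indexing $A=\{a_j\}$).
\end{enumerate}
Thus, whenever the cosets $h(g_i)+H$ and $h(g_i)-g_i+H$ both run over all of $G/H$, the function $f$ exists. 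This is exactly a complete mapping of $G/H$. By Hall--Paige it exists whenever $G/H$ is trivial or has non-cyclic Sylow $2$-subgroup. Since $G$ is a $p$-group, this covers every case except $p=2$ with $G/H$ a non-trivial cyclic $2$-group; in particular all odd $p$ are handled by $h(g)=-g$.

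\textbf{Obstacle: $p=2$ and $G/H$ a non-trivial cyclic $2$-group.} Here no complete map of $G/H$ exists, so the $A$- and $B$-components must absorb the defect, just as in the cyclic case of \autoref{lem:bijectionInQuotient}. I would pass to $G/A$, which contains $H/A\cong B$ as a non-trivial subgroup with transversal $\{g_i\}$. Since $B\neq 1$, the quotient $G/A$ is not a cyclic $2$-group modulo a trivial subgroup, and I would try to apply \autoref{lem:bijectionInQuotient} with the pair $H/A\le G/A$ (respectively $H/B\le G/B$). The aim is a bijection of $T$ that is allowed to move the $b_j$-coordinate and the coset $g_i+H$ simultaneously. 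Concretely, I would imitate the splitting construction in the proof of \autoref{lem:bijectionInQuotient}: partition $A$ into halves $A_1,A_2$ with a bijection $\theta\colon A_1\to A_2$, and let $h$ double or shift the cyclic generator of $G/H$. The overflow element $2^m c_0\in H$ is then absorbed into the $a$-coordinate for one half and into the $b$-coordinate for the other half. In this way each fixed row stays a transversal modulo $A$, and each fixed column stays a transversal modulo $B$.

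\textbf{Main obstacle.} Checking the two transversal conditions simultaneously in this $2$-adic case is where I expect the real work. The trouble is that the correction terms live in $A$ for one condition and in $B$ for the other, and $G/A$ and $G/B$ need not be isomorphic. The hypothesis $|A|=|B|>1$ is what should make the parity/sum obstruction vanish, i.e.\ $|H|\sum g_i$ corrected by an involution. I would verify this first via the sum test used after \autoref{lem:2-bijections}, and only then fix the explicit formulas.
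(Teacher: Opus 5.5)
Your first step is correct. Under your ansatz the two conditions are equivalent to $h$ inducing a complete mapping of $\bar G=G/(A\oplus B)$. That settles the cases where $p$ is odd, where $G=A\oplus B$, or where $\bar G$ is a non-cyclic $2$-group.

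The gap is the remaining case: $p=2$ with $\bar G$ a non-trivial cyclic $2$-group. You give no construction there. This case is not marginal. It is exactly the situation in which the proposition is used in \autoref{prop.missing-X+Y}, where it is applied to $\hat X$ with $\hat X/(A\oplus B)$ cyclic.

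Moreover, no refinement of your ansatz can work in this case. Suppose the coset $f(g_i+b_j)+(A\oplus B)$ depends only on $i$. The $r$ points of row $i$ then fill all $A$-cosets inside that $(A\oplus B)$-coset, so the condition modulo $A$ forces these cosets to be distinct. Likewise the condition modulo $B$ forces the cosets of $f(g_i+b_j)-g_i$ to be distinct. Together this is a complete mapping of a non-trivial cyclic $2$-group, which Hall--Paige forbids.

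Your proposed remedy does not close this case. \autoref{lem:bijectionInQuotient} yields a bijection controlling one quotient at a time, whereas you need $G/A$ and $G/B$ simultaneously, as you note yourself. The ``splitting'' construction is never written down. A sum test only checks a necessary condition.

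The missing idea is to let the $\bar G$-coordinate of $f(g_i+b_j)$ depend on $j$ as well, so the relevant group becomes $\bar G\times A$ rather than $\bar G$.

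\begin{itemize}
\item Index the points $g_i+b_j$ by $(\bar g_i,a_j)\in\bar G\times A$, identifying $b_j$ with $a_j$ through the index.
\item If $\bar G\neq 1$, then $\bar G\times A$ is a product of two non-trivial $p$-groups, hence non-cyclic. This is where $r>1$ enters. Hall--Paige then gives a bijection $\phi$ with $\phi+\mathrm{id}$ also bijective.
\item Write $\phi(\bar g_i,a_j)=(\bar g_{\phi_1},a_{\phi_2})$ and $(\phi+\mathrm{id})(\bar g_i,a_j)=(\bar g_{\theta_1},a_{\theta_2})$.
\item Split the carry as $g_i+g_{\phi_1}-g_{\theta_1}=\hat a_{ij}+\hat b_{ij}\in A\oplus B$.
\item Set $f(g_i+b_j)=g_{\theta_1}+b_{\theta_2}+\hat a_{ij}+a_{\phi_2}$.
\end{itemize}

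Then $f(g_i+b_j)\equiv g_{\theta_1}+b_{\theta_2}\pmod A$ and $f(g_i+b_j)-g_i\equiv g_{\phi_1}+a_{\phi_2}\pmod B$. Both are injective in $(i,j)$, because $\theta$ and $\phi$ are bijections and $\{g_i+b_j\}$, $\{g_i+a_j\}$ are transversals of $A$ and $B$ respectively.

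No $2$-adic case analysis is needed. The overflow you were trying to absorb is handled by putting its $A$-part into $f$ and letting its $B$-part vanish modulo $B$.
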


\begin{proof}
Let  $A=\{ a_j \mid j \leq r\}$ and $B = \{ b_j \mid j \leq r\}$  the elements of $A$ and $B$ in an arbitrary but fixed order. We write $\bar {g}_i $ for the elements of the quotient group $G/(A\oplus B) $ and we also fix their order  and their representatives $\{ g_i \mid i \leq n\}$ (that is,  the enumeration of $g_i$ is also  fixed).

If $G = A \oplus B$, then $n = 1$ (that is, $\{g_i\} = \{0\}$), and the required map is $f : G  \to G$ defined by $f(b_j + a_t) = f(b_j) = b_j + a_j$ for all $j, t \leq r$. Clearly, $f(b_j) \equiv b_j \pmod A$, so $f(b_j)$ are distinct for different values of $j$. Likewise, since $f(b_j) - 0 = f(b_j) \equiv a_j \pmod B$, they are also distinct modulo $B$ for different $j$. This proves the proposition in this case. Therefore, from now on we assume $G \neq A \oplus B$.

Define $H := G/(A \oplus B) \times A$. Then the elements of $H$ can be written as
\[ H = \{ h_{i, j}= (\bar{g}_i, a_j) \mid i \le n,\ j \le r \}. \]
The group $H$ is evidently non-cyclic, 
so the Hall–Paige theorem applies.  Hence there exists a bijection  $\phi: H \to H $ so that $\phi(h)+h$ is also a bijection of $H$.  Set 
\[
(\bar{g}_{\phi_{1,{i,j}}(i)} , a_{\phi_{2,{i,j}}(j)} ):= \phi( ( \bar{g}_i, a_j )), 
\]
for indices  $\phi_{1, i,j}(i) \leq n$ and $\phi_{2,i,j}(j) \leq r$. 
 That $\phi$ is a bijection forces  
 \begin{equation} \label{eq:phi-bijec}
 \{ (\phi_{1,{i,j}}(i), \phi_{2,{i,j}}(j) ) \mid i \leq n, j \leq r \} = \{ (i, j ) \mid i \leq n, j \leq r \}. 
\end{equation}
We now want to reformulate the fact that $\phi(h)+h$ is a bijection in terms of indices. To begin, for each $i \leq n$ and $j \leq r$ we set
\begin{align}\label{eq:theta-ij}
(   \bar{g}_{\theta_{{1, i,j}}(i)}, a_{\theta_{2,i,j}(j)}) &:= \phi( ( \bar{g}_i, a_j )) + (\bar{g}_i , a_j) \\
&=   (\bar{g}_{\phi_{{1, i,j}}(i)} , a_{\phi_{{2, i,j}}(j)} ) +   (\bar{g}_i , a_j)=   (\bar{g}_{\phi_{{1, i,j}}(i)} +\bar{g}_i ,  \,  a_{\phi_{{2, i,j}}(j)} +a_j ), 
\end{align}
for suitable indices  $\theta_{1, i,j}(i) \leq n$ and $\theta_{2, i,j}(j) \leq r$. That the map $\phi(h)+h$ is a bijection is equivalent to 
\begin{equation}\label{eq:theta-bijec}
\{ (\theta_{1, i,j}(i), \, \,  \theta_{2, i,j}(j)) \mid i \leq n,\; j \leq r \}
=
\{ (i, j) \mid i \leq n,\; j \leq r \}.
\end{equation}
Moreover, \eqref{eq:theta-ij} yields $\bar{g}_{\theta_{1,i,j}(i)} = \bar{g}_{\phi_{1,i,j}(i)} + \bar{g}_i$, and hence
\begin{equation}\label{eq:hat{a}}
g_i + g_{\phi_{1,i,j}(i)} = g_{\theta_{1, i,j}(i)} + \widehat{a}_{i,j} + \widehat{b}_{i,j}
\end{equation}
for some  $\widehat{a}_{i,j} \in A$ and $\widehat{b}_{i,j} \in B$.  

Define a map $f : G \to G$ by
\[
f(g_i + b_j) = g_{\theta_{1, i,j}(i)} + b_{\theta_{2,i,j}(j)} + \widehat{a}_{i,j} + a_{\phi_{2, i,j}(j)},
\]
and extend it to all of $G$, for instance, by setting $f(g_i + b_j + a_t) = f(g_i + b_j)$.  
We claim that $f$ fulfills the hypotheses of the proposition.

First, modulo $A$ we have
\[
f(g_i + b_j) \equiv g_{\theta_{1, i,j}(i)} + b_{\theta_{2, i,j}(j)} \pmod A,
\]
and so,  by \eqref{eq:theta-bijec}, the elements $f(g_i + b_j)$ are pairwise distinct modulo $A$ for different pairs $(i,j)$.

To verify the condition modulo $B$, note that, using \eqref{eq:hat{a}}, we obtain
\[
f(g_i + b_j) - g_i
= g_{\theta_{1, i,j}(i)} + b_{\theta_{2, i,j}(j)} + \widehat{a}_{i,j} + a_{\phi_{2, i,j}(j)} - g_i
\equiv g_{\phi_{1, i,j}(i)} + a_{\phi_{2, i,j}(j)} \pmod B.
\]
Now \eqref{eq:phi-bijec} implies that
\[
f(g_i + b_j) - g_i \not\equiv f(g_{i'} + b_{j'}) - g_{i'} \pmod B
\]
whenever $(i,j) \neq (i',j')$. This completes the proof of the proposition.
\end{proof}

We remark that the proposition does not generalize to arbitrary abelian groups because it relies on the Hall-Paige Theorem applied to the  group  $H$ (as it appears in the proof). In particular if $H$ contains a cyclic Sylow 2-subgroup, which can occur when $|G|$ involves more than one prime the proof above fails and the proposition itself is not true. For instance, consider $G = \Z_2 \times \Z_3 \times \Z_3$, $A= \{ 0\} \times \Z_3 \times \{0\}$, and $B = \{0 \} \times \{ 0\} \times \Z_3$. Here $\{ g_i \mid i \leq 2 \} = \{ g_1=(0,0,0), g_2=(1, 0, 0)\}$ and the conclusion of the proposition states that there exists $f: G \to G$ so that the six elements  $f(g_i +b_j )$ are all distinct modulo $A$ and thus cover the 6 cosets of $A$ in $G$, where $B = \{ b_j \mid j \leq 3 \}$.  Similarly $f(g_i+b_j)-g_i$ cover all the 6 cosets of $B$ in $G$. If $f(g_i+b_j)= (x_{i,j}, y_{i,j}, z_{i,j}) \in G $ then looking at the cosets of $A$ in $G$ we conclude that exactly three elements (out of the 6 distinct elements $\{ f(g_i+b_j) \}_{i,j}$)  have first coordinate $x_{i,j}=0 $ and the other three have $x_{i,j}=1$. So 
\begin{equation}\label{eq:sum1}
\sum_{i,j}x_{i,j}=3 \equiv 1 \pmod2.
\end{equation}

Similarly,  for the elements $f(g_i+b_j)-g_i= (\hat{x}_{i,j}, \hat{y}_{i,j}, \hat{z}_{i,j})$ we have three of them having  their first coordinate $\hat{x}_{i,j}$  equal to  $0$ and the other three equal to $1$. So 
\begin{equation}\label{eq:sum2}
\sum_{i,j}\hat{x}_{i,j}=3 \equiv 1 \pmod2.
\end{equation}

Now notice that 
\[
(\hat{x}_{i,j}, \hat{y}_{i,j}, \hat{z}_{i,j})= f(g_i+b_j)-g_i= \begin{cases}
f(g_i+b_j)= (x_{i,j}, y_{i,j}, z_{i,j}), \, \, \text{ if }  i=1\\
f(g_i+b_j)-(1,0,0)= (x_{i,j}-1, y_{i,j}, z_{i,j}), \, \, \text{ if }  i=2.  
\end{cases}
\]
Hence, \eqref{eq:sum1} implies 
\[
\sum_{i,j}\hat{x}_{i,j}= \sum_{i,j} x_{i,j} -3 \equiv 0 \pmod2. 
\]
This clearly contradicts \eqref{eq:sum2}, proving that \autoref{prop:f-transversal} does  not extend to arbitrary finite abelian groups.

\section { Reductions and notation}\label{sec:Reductions}
This section presents initial reductions to the general problem and introduces the main players of the proofs that will follow.

\subsection{Reductions}

 The next two lemmata are well  known  (see for example  \cite{kol97} and \cite{ALS24})  and we only mention them  here.   The first one shows that it suffices to assume that  $G=X+Y+Z$ (for its proof one might see ~\cite[Lemma 2.2]{ALS24}). 
\begin{lemma}\label{Lem:G=X+Y+Z}
Let $R$ an abelian group (not necessarily finite) and $K, L, M $ subgroups of $R$ then     $\T_R(K, L, M) =\emptyset $ if and only if $\T_{K+L+M}(K, L, M) = \emptyset$.  
\end{lemma}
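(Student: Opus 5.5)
The plan is to prove the two implications separately, writing $S:=K+L+M$. Throughout, a common transversal means a set $T\subseteq R$ that meets every coset of $K$, every coset of $L$ and every coset of $M$ in exactly one element.

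First suppose $T'$ is a common transversal of $K,L,M$ inside $S$. Choose a transversal $C$ of $S$ in $R$ using the axiom of choice, since $R$ need not be finite, and set $T:=C+T'$. Every $r\in R$ can be written uniquely as $r=c+s$ with $c\in C$ and $s\in S$. Because $K\le S$, the coset $r+K$ is contained in $c+S$. Inside $c+S$, translation by $c$ matches the cosets of $K$ in $S$ bijectively with the cosets of $K$ in $c+S$. Hence $r+K$ meets $c+T'$ in exactly one element, namely $c$ plus the unique element of $T'\cap(s+K)$. The coset $r+K$ does not meet $c'+T'$ for any $c'\neq c$, since $c'+T'\subseteq c'+S$, which is disjoint from $c+S$. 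The same argument works for $L$ and $M$, so $T$ is a common transversal in $R$.

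Conversely, suppose $T$ is a common transversal of $K,L,M$ in $R$. The goal is to produce one inside $S$. Choose any $t_0\in T$ and replace $T$ by $T-t_0$, which is still a common transversal because translation permutes cosets, and now contains $0$. Put $T':=T\cap S$. Each coset $s+K$ with $s\in S$ lies in $S$. The unique element of $T$ in $s+K$ therefore lies in $T'$, and it is unique in $T'$ as well. Thus $T'$ is a transversal of $K$ in $S$, and by the same reasoning of $L$ and $M$. The translation step is in fact unnecessary, since the intersection argument works for any $T$.

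No step is genuinely difficult. The only point that needs care is that $R$ may be infinite and that $S$ may have infinite index in $R$, which is why the first direction uses a choice of coset representatives of $S$ rather than a finite enumeration. Everything else is routine bookkeeping of cosets that uses only $K,L,M\le S$.
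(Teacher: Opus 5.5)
Your argument is correct in both directions. Lifting a common transversal $T'$ of $S=K+L+M$ to $C+T'$, with $C$ a set of coset representatives of $S$ in $R$, proves one direction, and restricting a common transversal $T$ of $R$ to $T\cap S$ proves the other. You are also right that neither finiteness nor the translation by $t_0$ is needed. The paper gives no proof of its own for this lemma and only cites \cite[Lemma 2.2]{ALS24}, so there is no in-paper argument to compare against; yours is the standard coset-bookkeeping proof.
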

The next result enables us to work with quotient groups (see \cite[Lemma 2.3]{ALS24}). 
\begin{lemma}\label{lem:factor} 
Let $R$ be an  abelian group and  $K, L, M \leq R$  with $N\leq K \cap L \cap M$. Then 
$\T_R(K, L, M)\ne\emptyset$ if and only if $\T_{R/N}(K/N, L/N,  M/N) \ne\emptyset $. 
In particular, 
$\{g_1, \ldots, g_m\} \in \T_R(K, L, M )$ if and only if $\{g_1+N, \ldots, g_m+N\} \in \T_{R/N}(K/N, L/N, M/N)$.

\end{lemma}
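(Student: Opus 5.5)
The plan is to verify both directions of the correspondence directly from the definition of a common transversal, using the natural projection $\pi: R \to R/N$. The key observation is that, since $N \le K\cap L\cap M$, for each $W\in\{K,L,M\}$ the cosets of $W$ in $R$ correspond bijectively to the cosets of $W/N$ in $R/N$. Concretely, $g+W \mapsto (g+N) + W/N$ is a bijection $R/W \to (R/N)/(W/N)$ (third isomorphism theorem), and it commutes with $\pi$. So a set $T\subseteq R$ meets every coset of $W$ exactly once if and only if its image meets every coset of $W/N$ exactly once, \emph{provided} the projection $\pi$ restricted to $T$ is injective.

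First, for the forward direction, I take $T=\{g_1,\dots,g_m\}\in\T_R(K,L,M)$. Two distinct elements $g_i\ne g_j$ of $T$ lie in different cosets of $K$, so $g_i-g_j\notin K\supseteq N$. Hence the $g_i+N$ are pairwise distinct. For each $W\in\{K,L,M\}$, the element $g_i+N$ lies in the coset $(g_i+N)+W/N$, which is the image of $g_i+W$. Since the $g_i+W$ run exactly once over $R/W$, the bijection above shows that the $(g_i+N)$ run exactly once over the cosets of $W/N$. Thus $\pi(T)\in \T_{R/N}(K/N,L/N,M/N)$.

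Conversely, suppose $\{g_1+N,\dots,g_m+N\}$, with distinct classes, is a common transversal in $R/N$. The elements $g_i$ then lie in pairwise distinct cosets of $N$. For each $W$, the coset $g_i+W$ corresponds to $(g_i+N)+W/N$, and these are distinct and exhaust $(R/N)/(W/N)$. By the bijection $R/W\cong (R/N)/(W/N)$, the $g_i+W$ are therefore distinct and exhaust $R/W$. So $\{g_i\}$ is a common transversal in $R$; note this holds for any choice of representatives $g_i$. This gives the \enquote{in particular} statement, and the equivalence of nonemptiness follows immediately.

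The only delicate point is the compatibility of the coset correspondence with representatives, i.e.\ that $g+W=g'+W$ iff $(g+N)-(g'+N)\in W/N$. This uses exactly $N\le W$ and is routine. No finiteness is needed, so the argument works for arbitrary abelian $R$, as the statement requires.
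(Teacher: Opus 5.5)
Your proof is correct and complete. The paper gives no proof of this lemma; it only cites \cite[Lemma 2.3]{ALS24}. Your direct check via the bijections $R/W \to (R/N)/(W/N)$, $g+W\mapsto (g+N)+W/N$ for $W\in\{K,L,M\}$, is the standard argument: it uses only $N\le W$ and needs no finiteness, which suits the general $R$ in the statement.

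You were also right to require the classes $g_i+N$ to be pairwise distinct in the converse. Without that, adjoining $g_1+n$ with $0\ne n\in N$ to a common transversal gives a set whose image is still a common transversal, while the set itself is not.
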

An immediate consequence of the above lemma is that we can always assume that $X \cap Y \cap Z = \{0\}$. 

The first simple but interesting result in the direction of our main theorem is  the special case where the (common) index of $X$ (and $Y$, $Z$)  in $G$ is a prime $p$.  
\begin{lemma}\label{Lem:Index-p}
Assume $G$ is finite abelian $p$-group and $X, Y, Z$ subgroups of index $p$ and trivial intersection.  Then $T_G(X, Y, Z) = \emptyset$ if and only if $p=2$ and $G $ is isomorphic to the Klein group $V_4$. 
\end{lemma}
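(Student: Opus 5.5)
The plan is to reduce everything to the elementary abelian quotient $G/(X\cap Y)$ and then handle it by a case split on how $X,Y,Z$ sit there, building a transversal in the good cases and counting to rule one out in the Klein case.

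First, pin down the structure. Since $X,Y,Z$ are distinct subgroups of index $p$ (if two coincided, say $X=Y$, then $X\cap Z$ has index at most $p^2$; this degenerate case is treated separately below), each contains the Frattini subgroup $\Phi(G)=pG$. Hence $X\cap Y\cap Z\supseteq pG$. Trivial intersection therefore forces $pG=0$, so $G$ is elementary abelian, $G\cong \Z_p^k$. Moreover $X\cap Y\cap Z$ is the kernel of $G\to G/X\times G/Y\times G/Z\cong \Z_p^3$, so $k\le 3$.

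Next, split according to whether the three corresponding hyperplanes (equivalently, the three nonzero functionals $\chi_X,\chi_Y,\chi_Z\in \mathrm{Hom}(G,\Z_p)$) are linearly independent.

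\begin{itemize}
\item \emph{Independent functionals} ($k=3$). Then $(\chi_X,\chi_Y,\chi_Z)$ is an isomorphism $G\cong\Z_p^3$. A common transversal is a set of $p$ elements whose three coordinates are each pairwise distinct. The set $\{(i,i,i)\mid i\in\Z_p\}$ works for every $p$, including $p=2$.
\item \emph{Dependent functionals} ($k=2$, $G\cong\Z_p^2$). Here $X,Y,Z$ are three distinct lines through the origin. We need $p$ elements $t_1,\dots,t_p$ with $t_i-t_j\notin X\cup Y\cup Z$ for all $i\ne j$. For $p\ge 3$ there is a fourth line $W$ (there are $p+1\ge 4$ lines), and $W$ itself serves as a common transversal, since $W\cap X=W\cap Y=W\cap Z=0$. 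For $p=2$, $G=V_4$ and $X\cup Y\cup Z=G$, so no such pair exists: this is exactly the obstruction described in the introduction.
\item \emph{Degenerate case} (two subgroups equal, say $X=Y$). Then $G=X\oplus Z$-type considerations, or simply Hall's theorem for two subgroups of equal index, give a common transversal. Note that here $k\le 2$ and $V_4$ with $X=Y$ is not the exceptional configuration, which requires three distinct subgroups.
\end{itemize}

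I expect no real obstacle. The only point needing care is the initial reduction: that index-$p$ subgroups contain $pG$, which is true because $G/X$ has exponent $p$. The exceptional case is then immediate from the union argument already given in the introduction.
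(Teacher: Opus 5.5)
Your proof is correct, but it takes a different route from the paper's. The paper splits on whether $G = X\cup Y\cup Z$. If some $g$ lies outside the union, then $\{ig \mid 1\le i\le p\}$ is a common transversal. If not, it invokes the classical covering theorem (a group that is a union of three proper subgroups has a quotient isomorphic to $V_4$) to get $p=2$. Inclusion--exclusion with $|X\cap Y|=|X\cap Z|=|Y\cap Z|=|G|/4$ then forces $|G|=4$.

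You instead use $pG\le X\cap Y\cap Z=0$ and the embedding $G\hookrightarrow G/X\times G/Y\times G/Z\cong \Z_p^3$. This makes $G$ elementary abelian of rank $k$ equal to the rank of $\{\chi_X,\chi_Y,\chi_Z\}$, which is what justifies $k=2$ in your dependent case, with $k=1$ absorbed by the degenerate case. You then enumerate the configurations by linear algebra. Your transversals are in fact the paper's: $\{(i,i,i)\}$ is $\{ig\}$ for $g=(1,1,1)\notin X\cup Y\cup Z$, and the fourth line $W$ is $\langle g\rangle$ for any $g$ off the three lines. So the routes differ only in how the obstruction is detected. The paper's argument is shorter and uses one uniform construction, but leans on the covering theorem and a count. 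Yours is self-contained and makes the structure of $G$ explicit.

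Your separate treatment of the degenerate case is also a genuine improvement. In $V_4$ with $X=Y=\langle x\rangle\neq Z=\langle z\rangle$ the triple intersection is trivial, yet $\{0,x+z\}$ is a common transversal. So the ``if'' direction of the lemma as literally stated needs $X,Y,Z$ pairwise distinct, an assumption left implicit in the paper's ``Clearly the Klein group $V_4$ satisfies the lemma.''
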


\begin{proof}
 Clearly the Klein group $V_4$ satisfies the lemma. For the other direction assume first that  $G \neq X \cup Y \cup Z$.  Then any $g \in G \setminus (X \cup Y \cup Z)$ generates a common transversal $\{ i \, g \mid 1 \leq i \leq p \}$ for $X, Y$, and $Z$. If $G= X \cup Y \cup Z$ then  it is well known  (see, e.g., \cite{gaetano,Bruckheimer01011970}) that $G$ has a factor group isomorphic to  $V_4$, so  $p=2$. Moreover,  $X, Y, Z$ are distinct maximal subgroups of $G$, hence  $G = X+Y= X+Z=Y+Z$, implying $|X \cap Y| = |X \cap Z| = |Y \cap Z| =|G|/4$. Since $|X\cap Y \cap Z|= 1$, the inclusion-exclusion principle yields 
   \[
|G|= |X\cup Y \cup Z|= \frac{3|G|}{2} - \frac{3|G|}{4} + 1 = \frac{3|G|}{4} + 1.
    \]
    Therefore  $|G|= 4$ and so  $G \cong V_4$,   as desired.  
\end{proof}
    
\subsection{Notation}

Let us now introduce the  notation that will be used henceforth.   $G$ is always  assumed to be a finite abelian $p$-group with subgroups $X, Y, Z$ of the same order  $m$ that intersect trivially, i.e., $X \cap Y \cap Z = \{0\}$. We also assume that $G= X+Y+Z$. 
We write  $\hat{X}= X \cap (Y+Z)$, $\hat{Y}= Y \cap (X+Z)$, and $\hat{Z}= Z \cap (X+Y)$ and assume that their orders are $l_1, l_2$, and $l_3$, respectively. 
Moreover we write 
$A:= X\cap Y$, $B := X\cap Z$ and $C := Y \cap Z$, and their orders as $r_1, r_2$ and $r_3$ respectively.  These subgroups are central to the proofs that follow and we collect their definitions, orders and relations, for easy  reference, below;   
\begin{align}\label{eq:def.ABC}
\hat{X}= X \cap (Y+Z), \, \, \hat{Y}&=Y \cap (X+Z), \, \,  \hat{Z}= Z \cap (X+Y)\\
l_1= |\hat{X}|,\, \,   l_2 &=|\hat{Y}|, \, \,  l_3= |\hat{Z}| \\
A= X \cap Y, \,\, \,   B&= X\cap Z, \, \, \,  C= Y \cap Z\\
r_1=|A|, \, \, r_2&=|B|, \, \, r_3=|C|.  \\
\end{align}
We also define 
\begin{align}\label{eq:defT}
   T_X:= A \oplus B &\leq \hat{X} \leq  X\\
    T_Y:=A \oplus C &\leq \hat{Y} \leq  Y\\
     T_Z:=B\oplus C &\leq \hat{Z} \leq  Z   
\end{align}
So $l_i \mid m$ for all $i$,  while $r_1 r_2 \mid l_1$, $r_1r_3 \mid l_2$ and $r_2 r_3 \mid l_3$.
Furthermore, the sum of $A=(X\cap Y),  B=(X\cap Z)$ and $C=(Y \cap Z)$ is easily checked to be direct and we denote it as 
\begin{equation}\label{eq.T}
T = A\oplus B \oplus C.
\end{equation}
Let us also point that any of the three "hat"-groups is contained within the sum of the other two, i.e., 
\begin{equation}\label{eq:hat-groups}
\hat{X} + \hat{Y}= \hat{X} +\hat{Z}= \hat{Y} + \hat {Z}, 
\end{equation}
because any relation $x+y=z$ with $x\in X, y\in Y$, and $z \in Z$ implies $x \in \hat{X}, y\in \hat{Y}$, and $z \in \hat{Z}$. It is also straightforward that 
\begin{equation}\label{eq:hat-intersections}
  \hat{X} \cap \hat{Y} = A, \, \, \hat{X} \cap \hat{Z}= B, \, \, \hat{Y} \cap \hat{Z}= C.   
\end{equation}

Because  $G = X+Y+Z$,   we can compute the order of $G$ as a function of $m, l_i$ and $r_i$ as 
\begin{equation}\label{eq:orderG}
|G|= |X+Y+Z|= \frac{m^3}{l_1r_3}=\frac{m^3}{l_2r_2}= \frac{m^3}{l_3r_1}
\end{equation}
and thus 
\begin{equation}\label{eq:lr}
l_1r_3=l_2r_2=l_3r_1.
\end{equation}
This in turn implies 
\begin{equation}\label{eq:[hatY:T_Y]}
[\hat{X}:T_X]=[\hat{Y}:T_Y]= [\hat{Z}:T_Z]=\frac{l_1}{r_1 r_2}= \frac{l_2}{r_1 r_3}=\frac{l_3}{r_2 r_3}.
\end{equation}
The relationship between the "hat" groups (see \eqref{eq:hat-groups}) allows us to simultaneously choose transversals for $T_X, T_Y,$ and $T_Z$ in $\hat{X}, \hat{Y},$ and $\hat{Z}$, respectively. We prove this in the next lemma.  
\begin{lemma}\label{lem:correspondence-Transversals}
Let $\{\hat{z_i} \mid i \leq \frac{l_1}{r_1 r_2}\}$ be a transversal of $T_Z$ in $\hat{Z}$. Then $\hat{z_i} = \hat{x}_i +\hat{y}_i$ for some $\hat{x}_i\in X$ and $\hat{y}_i \in Y$, where $i\le \frac{l_1}{r_1r_2}$. Moreover, $\{\hat{x}_i \mid i \leq  \frac{l_1}{r_1r_2} \}$ and $\{\hat{y}_i \mid i \leq  \frac{l_1}{r_1r_2}\}$ are transversals of $T_X$ in $\hat{X}$ and $T_Y$ in $\hat{Y}$, respectively.
\end{lemma}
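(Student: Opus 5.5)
Since $\hat{z}_i\in\hat{Z}=Z\cap(X+Y)$, we can write $\hat{z}_i=\hat{x}_i+\hat{y}_i$ with $\hat{x}_i\in X$ and $\hat{y}_i\in Y$; this gives the first claim. We do not need to control the choice of decomposition, because any two decompositions differ by an element of $X\cap Y=A$, and $A\le T_X$ and $A\le T_Y$. From $\hat{x}_i=\hat{z}_i-\hat{y}_i\in Z+Y$ we get $\hat{x}_i\in X\cap(Y+Z)=\hat{X}$, and symmetrically $\hat{y}_i\in\hat{Y}$. This is exactly the observation behind \eqref{eq:hat-groups}.

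By \eqref{eq:[hatY:T_Y]}, $[\hat{X}:T_X]=[\hat{Y}:T_Y]=[\hat{Z}:T_Z]=\frac{l_1}{r_1r_2}$. The set $\{\hat{x}_i\}$ has the right number of indices, so it suffices to show that the $\hat{x}_i$ lie in pairwise distinct cosets of $T_X$, and likewise for the $\hat{y}_i$ modulo $T_Y$. Suppose $\hat{x}_i-\hat{x}_j\in T_X=A\oplus B$, say $\hat{x}_i-\hat{x}_j=a+b$ with $a\in A$ and $b\in B$. Then
\[
\hat{z}_i-\hat{z}_j=(a+b)+(\hat{y}_i-\hat{y}_j),
\]
so
\[
\hat{z}_i-\hat{z}_j-b=a+\hat{y}_i-\hat{y}_j .
\]
The left side lies in $Z$ and the right side lies in $Y$, so both lie in $C=Y\cap Z$. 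Hence $\hat{z}_i-\hat{z}_j\in B+C=T_Z$, and therefore $i=j$ because $\{\hat z_i\}$ is a transversal. The argument for the $\hat{y}_i$ is symmetric: if $\hat{y}_i-\hat{y}_j=a+c$ with $a\in A$, $c\in C$, then $\hat{z}_i-\hat{z}_j-c=a+\hat{x}_i-\hat{x}_j$ lies in $Z\cap X=B$, so again $\hat{z}_i-\hat{z}_j\in T_Z$.

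Distinctness of cosets together with the equality of indices shows that the sets are transversals. The only step requiring care is the index equality, and it is already supplied by \eqref{eq:lr} and \eqref{eq:[hatY:T_Y]}.
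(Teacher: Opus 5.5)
Your proof is correct and follows the paper's argument. Both decompose $\hat z_i=\hat x_i+\hat y_i$, show that a coincidence modulo $T_X$ (resp.\ $T_Y$) forces $\hat z_i\equiv \hat z_j \pmod{T_Z}$, and conclude by counting with $[\hat X:T_X]=[\hat Y:T_Y]=[\hat Z:T_Z]$. The difference is cosmetic: the paper computes $\hat Z\cap(T_X+\hat Y)=T_Z$ via Dedekind's modular law, whereas you check the same containment elementwise, and you also spell out $\hat x_i\in\hat X$, $\hat y_i\in\hat Y$, which the paper uses tacitly.
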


\begin{proof}
As $\hat{Z} \leq \hat{X} +\hat{Y}$, we can express each $\hat{z}_i$ for $i \leq \frac{l_1}{r_1r_2}$ as $\hat{z_i} = \hat{x}_i +\hat{y}_i$, where $\hat{x}_i\in X$ and $\hat{y}_i \in Y$. Thus,  it remains to show that $\{\hat{x}_i\}$ and $\{\hat{y}_i\}$ are inequivalent modulo $T_X$ and $T_Y$, respectively. As the problem is symmetric for $\hat{X}$ and $\hat{Y}$, is suffices to prove $\{\hat{x}_i\}$ is a transversal of $T_X$ in $\hat{X}$. 

Assume that $x_i - x_{i'} \in T_X$ for some $i,i'\le \frac{l_1}{r_1r_2}$. Then $z_i - z_{i'} = x_i-x_{i'} +y_i - y_{i'} \in \hat{Z} \cap (T_X + \hat{Y})$.  Let us now observe
\begin{multline}
\hat{Z} \cap (T_X + \hat{Y})= \hat{Z} \cap (A\oplus B + \hat{Y})= \hat{Z} \cap (B \oplus C + \hat{Y})\\
=B \oplus C +(\hat{Z} \cap \hat{Y}) = B \oplus C +C = B\oplus C= T_Z, 
\end{multline}
where the second equality uses $A, C \subseteq \hat{Y}$, the third follows from  Dedekind's modular law since $B\oplus C \subseteq \hat{Z}$, and the fourth uses \eqref{eq:hat-intersections}. Thus, $x_i \equiv x_{i'} \pmod {T_X}$ implies $z_i \equiv z_{i'} \pmod {T_Z}$. Since $\{z_i\}$ is a transversal, we conclude that $\{ x_i\}$  are all inequivalent modulo $T_X$ and so it is a transversal for $T_x$ in $\hat{X}$,  because it has the right order.
   This completes the proof of the lemma. 
\end{proof}

\section{ The case $G = X \oplus Y $}\label{sec:DirectSum}
This section addresses the special case where $G$ is a finite abelian  $p$ group and is  the direct sum of two subgroups, $X$ and $Y$. This case is crucial because (as we will see) is the only  instance of three subgroups lacking a common transversal. Our first theorem states:

\begin{theorem} \label{Thm:Direct-Sum-of-Two}
Assume $G$ is an abelian $p$-group. Let $X, Y, Z$ be proper subgroups of the same index in $G$ such that $G= X\oplus Y$. Then $\T_G(X, Y, Z) = \emptyset$ if and only if $p = 2$ the groups  $X, Y, Z$ are all cyclic and $G = X \oplus Y = X \oplus Z = Y \oplus Z$.  
\end{theorem}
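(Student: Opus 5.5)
The plan is to turn the existence of a common transversal into the existence of a bijection with a prescribed "difference" property, and then to use the maps of Section~\ref{sec:Bijections}. Let $m=|X|=|Y|=|Z|$, so that $|G|=m^2$.

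\textbf{Step 1: reformulation.} Since $G=X\oplus Y$, a set $S\subseteq G$ of size $m$ is a transversal of $X$ if and only if its projection to $Y$ is bijective. It is a transversal of $Y$ if and only if its projection to $X$ is bijective. Hence the common transversals of $X$ and $Y$ are exactly the sets
\[
S_\varphi=\{\varphi(y)+y \mid y\in Y\},
\]
where $\varphi:Y\to X$ is a bijection. Then $\T_G(X,Y,Z)\neq\emptyset$ if and only if, for some such $\varphi$, the elements $\varphi(y)+y$ are pairwise incongruent modulo $Z$.

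In the notation of Section~\ref{sec:Reductions} we have $A=X\cap Y=0$, $\hat X=X$, $\hat Y=Y$ and $\hat Z=Z$. So $l_1=l_2=l_3=m$, and \eqref{eq:lr} gives $r_2=r_3$. Thus $B=X\cap Z$ and $C=Y\cap Z$ have a common order $r$. By \autoref{lem:correspondence-Transversals}, a transversal $\{z_i\}$ of $B\oplus C$ in $Z$ splits as $z_i=x_i+y_i$, where $\{x_i\}$ is a transversal of $B$ in $X$ and $\{y_i\}$ is a transversal of $C$ in $Y$.

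\textbf{Step 2: the case $r=1$ (necessity and the complement case).} If $B=C=0$, then $Z$ projects isomorphically onto both $X$ and $Y$. Hence $Z=\{x+\alpha(x)\mid x\in X\}$ for an isomorphism $\alpha:X\to Y$, and $G=X\oplus Z=Y\oplus Z$. Modulo $Z$ we have $x+y\equiv y-\alpha(x)$, and $Y$ is a transversal of $Z$. So the condition of Step 1 says exactly that $y\mapsto y+\psi(y)$ is a bijection of $Y$, where $\psi=-\alpha\varphi$ is an arbitrary bijection of $Y$; that is, $\psi$ is a complete mapping of $Y$. By the Hall–Paige theorem such a $\psi$ exists if and only if $Y$ is not a nontrivial cyclic $2$-group.

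This settles both directions in the complement case. In particular it gives the "if" direction: when $p=2$, $X$ and $Y$ are cyclic and $G=X\oplus Z=Y\oplus Z$, we get $B=C=0$, so no common transversal exists. Conversely, if $B=C=0$ and $Y$ is not a cyclic $2$-group, we get a transversal. Finally, if the obstruction pattern fails, then either $r>1$ or $Y$ (equivalently $X\cong Y$) is not a cyclic $2$-group.

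\textbf{Step 3: the case $r>1$ (main obstacle).} Here I would build $\varphi$ coset by coset, using \autoref{prop:f-transversal} applied to $B\oplus C\le G$ with $|B|=|C|=r>1$ (this needs $G$ to be a $p$-group, which it is). Write every $y\in Y$ uniquely as $y=y_i'+c$, with $\{y'_i\}$ a transversal of $C$ in $Y$ refining the $y_i$ of Step 1, and $c\in C$. Define $\varphi$ so that
\begin{itemize}
\item its values run over a transversal $\{x'_k\}$ of $B$ in $X$ shifted by elements of $B$ (guaranteeing that $\varphi$ is a bijection onto $X$), and
\item the residues of $\varphi(y)+y$ modulo $Z$ are controlled by the two incongruence conditions of the proposition: the condition modulo $A:=B$ handles the $X$-component and the condition modulo $C$ handles the $Y$-component.
\end{itemize}
The two congruence conditions in \autoref{prop:f-transversal} are designed precisely so that the pair ($X$-part modulo $B$, $Y$-part modulo $C$) of $\varphi(y)+y$ is injective after the identification $Z/(B\oplus C)\cong X/B$ furnished by \autoref{lem:correspondence-Transversals}, and that injectivity is equivalent to incongruence modulo $Z$. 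Where the indexing does not match directly, \autoref{lem:2-bijections} and \autoref{lem:bijectionInQuotient} can be used to redistribute within cosets of $B$ or $C$. In those lemmas the possible cyclic $2$-group obstruction is neutralized by the nontriviality of $B$ and $C$.

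The delicate point, which I expect to be the main obstacle, is the bookkeeping that matches the indices of the proposition, namely the $g_i$ (a transversal of $B\oplus C$ in $G$) and the $b_j\in C$, with the parametrization $y=y_i'+c$. One must check that the resulting $\varphi$ is simultaneously a bijection $Y\to X$ and incongruent modulo $Z$. I would first test the construction on $G=\mathbb Z_4\oplus\mathbb Z_4$ with $r=2$ before writing it in general.
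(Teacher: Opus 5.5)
The reformulation via bijections $\varphi\colon Y\to X$ and your Step~2 are correct. Step~2 is the paper's Case~1: a reduction to a complete mapping of $Y$ followed by Hall--Paige. It gives the ``if'' direction, and the ``only if'' direction when $X\cap Z=Y\cap Z=0$. The remaining case is not proved, and it rests on a false structural claim. It is not true that $\hat X=X$ and $\hat Y=Y$. Since $X+(Y+Z)=G$ has order $m^2$ and $|Y+Z|=m^2/r_3$, one gets $|\hat X|=m/r_3$, i.e.\ $[X:\hat X]=r_3$, and likewise $[Y:\hat Y]=r_2$. So \eqref{eq:lr} only says $l_1r_3=l_2r_2=m$ and gives no relation between $r_2$ and $r_3$. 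For instance, take $G=\mathbb{Z}_p^4$ with $X=\langle e_1,e_2\rangle$, $Y=\langle e_3,e_4\rangle$, $Z=\langle e_1,e_2+e_3\rangle$. Then $X\cap Z=\langle e_1\rangle$, $Y\cap Z=0$ and $\hat Y=\langle e_3\rangle\neq Y$. Hence the case where $B$ and $C$ have different orders (e.g.\ exactly one of them trivial) is missing from your case split, and Proposition~\ref{prop:f-transversal} cannot be invoked there, since it needs two nontrivial subgroups of equal order. Your use of Lemma~\ref{lem:correspondence-Transversals} is also off. It yields transversals of $B$ in $\hat X$ and of $C$ in $\hat Y$, not in $X$ and $Y$. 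So $|Z/(B\oplus C)|=m/(r_2r_3)$ while $|X/B|=m/r_2$, and the identification $Z/(B\oplus C)\cong X/B$ fails whenever $C\neq 0$.

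Even when $|B|=|C|>1$, Step~3 never defines $\varphi$. Applied to $B\oplus C\le G$ as you describe, Proposition~\ref{prop:f-transversal} controls $[G:B\oplus C]\cdot r=m^2/r\neq m$ elements, so it cannot by itself produce a transversal. The missing idea is to absorb the extra layers $X/\hat X$ (of order $r_3$) and $Y/\hat Y$ (of order $r_2$) by pairing them crosswise with $C$ and $B$. After possibly swapping $X$ and $Y$, assume $C\neq 0$. Take transversals $\{x_t\}_{t\le r_3}$ of $\hat X$ in $X$ and $\{y_i\}_{i\le r_2}$ of $\hat Y$ in $Y$. Write $B=\{b_i\}_{i\le r_2}$ and $C=\{c_t\}_{t\le r_3}$, and choose $\hat z_j=\hat x_j+\hat y_j$ as in Lemma~\ref{lem:correspondence-Transversals}. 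Let $f$ be the bijection of Lemma~\ref{lem:bijectionInQuotient} for $C\le\hat Y\le Y$; this needs only $C\neq 0$, and Proposition~\ref{prop:f-transversal} plays no role. Then $F=\{x_t+\hat x_j+b_i+f(y_i+\hat y_j+c_t)\}$ works. In your notation, $\varphi=\beta\circ f^{-1}$ with $\beta(y_i+\hat y_j+c_t)=x_t+\hat x_j+b_i$. Modulo $X$ and $Y$ this is clear. Modulo $Z$ the element becomes $x_t-\hat y_j+f(y_i+\hat y_j+c_t)$. A congruence between two such elements forces $x_t-x_{t'}\in X\cap(Y+Z)=\hat X$, hence $t=t'$. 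The remaining difference then lies in $Y\cap Z=C$, which is exactly what the lemma rules out for fixed $t$.
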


\begin{proof}
In our notation (see \eqref{eq:def.ABC},   \eqref{eq:lr}), we have  $r_1=1$, $\hat{Z} = Z$, $l_3= m=|X|$ and $|G|=|X\oplus Y|= m^2$. Because  $G$ is the direct sum of $X$ and $Y$ every element of $Z$ can be uniquely written as an element of $X$ plus an element of $Y$, so that 
\begin{equation}\label{eq:Z}
    Z=\{z_i=x_i +y_i \: | \: i\le m\}
\end{equation} 
for some $x_i \in X$ and $y_i\in Y$ not necessarily distinct. Now we distinguish cases, depending on the values of $r_2$ and $r_3$. 

 \textbf{ Case 1.} ($r_1=r_2=r_3=1$).
 
 In this case all  $A, B, C$ are  trivial groups.  So  $l_1=l_2=l_3 = m$ (by \eqref{eq:lr}) and thus, not only $\hat{Z}=Z$, but also  $\hat{X}=X$ and   $\hat{Y}=Y$. Furthermore, $G = X\oplus Y = X \oplus Z= Y \oplus Z$  and all the groups $X, Y, Z$ are isomorphic; 
    \begin{equation*}
        Y\cong (X\oplus Y)/X = (X\oplus Z)/X \cong Z\cong (Y\oplus Z)/Y =(Y\oplus X)/Y \cong X
    \end{equation*}
    Furthermore, \autoref{lem:correspondence-Transversals} implies that the elements $\{z_i\}$ of $Z$ can be expressed as $\{z_i = x_i + y_i \mid i \le m\}$, where $\{x_i \mid i \leq m\} = X$ and $\{y_i \mid i \leq m\} = Y$. We fix this enumeration for the elements of $X$, $Y$, and $Z$.

 We first claim that a set $F$ of cardinality $m$ is a common transversal of $X$ and $Y$ if and only if there exists a bijection $\phi: Y \to Y$ so that 
\begin{equation}\label{eq: F}
    F=\left \{x_i + \phi (y_i) \: | \: i\le m\right \}
    \end{equation} 
 The "only if" direction is clear; we now prove the "if" direction.
Let $F'$ be a common transversal of $X$ and $Y$ in $G$. As $F'$ is a transversal of $X$ in $G$, we can enumerate its elements as $f_i$, $i \le m$, such that $f_i \equiv x_i \pmod{Y}$, implying $f_i = x_i + y^{i}$ for some $y^{i} \in Y$, for all $i \le m$. Because $F'$ is also a transversal of $Y$ in $G$, we have $\{y^{i} \mid i \le m\} = Y$. Defining $\phi : Y \to Y$ such that $\phi(y_i) = y^{i}$, we observe that $\phi$ is a bijection of $Y$ and  $F'$ has the form given by equation $\eqref{eq: F}$.

  Now, a  transversal of $X, Y$ in $G$, that we may assume to be of the form $\eqref{eq: F}$ for some bijection $\phi$, is a transversal of $Z$ in $G$ if and only if 
   \[
   x_i + \phi(y_i) \not \equiv x_{i'} + \phi(y_{i'}) \pmod{Z}
   \]
   for all distinct $i, i' \le m$.
   
Since $x_i \equiv -y_i \pmod{Z}$ (because $z_i = x_i + y_i$), a common transversal $F=\{x_i + \phi (y_i) \: | \: i\le m\}$ of $X$ and $Y$ in $G$ is also a transversal of $Z$ if and only if $-y_i +\phi(y_i) - (-y_{i'} +\phi(y_{i'}) ) \notin Z$ for all $i \neq i' \leq m$. This is equivalent to requiring that $\phi$ apart from being a bijection of $Y$ should also satisfy  
\[
\phi(y_i) - y_i\neq \phi(y_{i'}) - y_{i'}
\]
for all $i\neq i'\le m$. In other words, the map $\psi: Y \to Y$ defined as
    \begin{equation*}
        \psi(y_i)= \phi (y_i) - y_i
    \end{equation*}
    is a bijection on $Y$ and simultaneously $\psi(y_i) +y_i= \phi(y_i)$ is also a bijection on $Y$,   i.e, $\psi$ must be a complete map in $Y$. By the Hall-Paige Theorem for finite abelian groups, a complete map exists if and only if $Y$ (and so $X$ and $Z$ as they are isomorphic) is not a cyclic $2$-group. Therefore, a common transversal for $X, Y, Z$ in $G=X\oplus Y= X \oplus Z= Y \oplus Z$ exists if and only if there exists a complete map $\psi:Y\to Y$, which holds if and only if $p$ is an odd prime or $p=2$ and $Y$ is not cyclic.

This completes Case 1 of our theorem. Thus,  may assume that not all $r_i$ are trivial. Observe though that  to complete the proof, it suffices to show that a common transversal always exists in the remaining cases. 
  Without loss we may assume that  $r_3=|C|=|Y\cap Z|>1$. 
  
  {\bf Case 2. } ($r_1=1, r_3 >1$) 
  
By \autoref{lem:correspondence-Transversals}, we can select a transversal $\{\hat{z_j} \mid j \leq \frac{l_1}{ r_2}\}$ of $T_Z$ in $\hat{Z}$ such that 
\[
\hat{z_j} = \hat{x}_j +\hat{y}_j,
\]
where 
$\{ \hat{x}_j\mid  j \leq \frac{l_1}{ r_2} \}$ and $\{ \hat{y}_j \mid j \leq \frac{l_1}{ r_2} \} $ are transversals of $T_X$ in $\hat{ X} $ and $T_Y$ in $\hat{Y}$, respectively. Here, $T_X= B$, $T_Y= C$ and $T_Z= B \oplus C$, where $B$ may be trivial, in which case $\{ \hat{x}_j \}= \hat{X}$. Thus,
\[
\hat{X} = \bigsqcup_{j=1}^{l_1/r_2} \hat{x}_j + B , \, \,\, \,  \hat{Y} = \bigsqcup_{j=1}^{l_1/r_2} \hat{y}_j + C, \, \, \, \, Z= \hat{Z} = \bigsqcup_{j=1}^{l_1/r_2} \hat{z}_j + (B \oplus C).  
\]
Fix an arbitrary enumeration of the elements of $B=\{b_i \: |\: i\le r_2\}$ and $C=\{c_t \: | \: t\le r_3\}$. Let us note that in our case $m = l_3 =  l_1r_3= l_2 r_2$, hence   $ [X:\hat{X}]=r_3$ and  
$ [Y:\hat{Y}]=r_2$.   We also fix  transversals $\{ x_i   \mid i  \leq r_3 \} $  and $\{y_i  \mid i \leq r_2\}$ of   $\hat{X} $ in $X$ and $\hat{Y}$ in $Y$ respectively.   
\begin{figure}[h]
\includegraphics[width=0.5\textwidth]{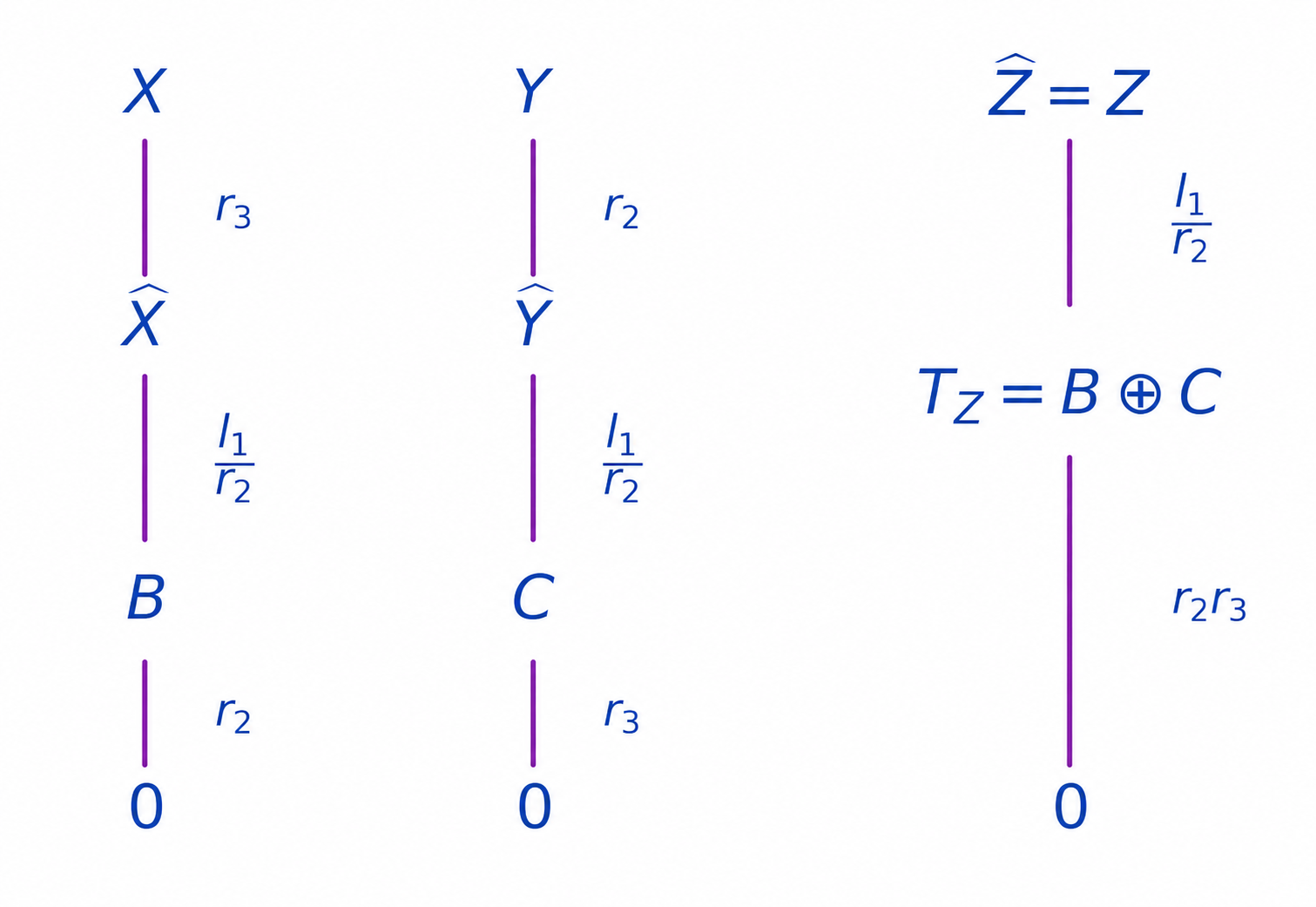}
\caption{$X,Y,Z$ along with their  involved subgroups}
\label{fig:myfigure2}
\end{figure}

We are now applying \autoref{lem:bijectionInQuotient} to the groups $Y, \hat{Y}$ and $C$ in the 
place of $G, B$  and $A$ respectively. This yields  a bijection $f: Y \to Y $  such that for any fixed $t \leq r_3$, 
\[
f(y_i + \hat{y}_j +c_t) - \hat{y}_j \not \equiv f(y_{i'} + \hat{y}_{j'} +c_{t}) - \hat{y}_{j'} \pmod {C}
\] 
whenever $(i, j) \neq (i',j')$.  We claim that the set 
    \begin{equation}\label{eq:F}
        F:= \left\{ x_t + \hat{x}_j + b_i  + f(y_i +\hat{y}_j +c_t) \: | \: i\le r_2, j\le \frac{l_1}{r_2}, t\le r_3\right \}
    \end{equation}
   is a common transversal for $X, Y, Z$ in $G$. 
   
   Clearly it is of the correct size $l_1r_3 = m= [G:X]$. Since $f$ is a bijection of $Y$, taking the elements of $F$ modulo $X$ yields every element of $Y$, giving a transversal of $X$ in $G = X \oplus Y$. Similarly, taking the elements of $F$  modulo $Y$ yields every element of $X$ because every element in $X$ can be written, in a unique way, as $x_t + \hat{x}_j + b_i$ for appropriate indices. Thus $F$ modulo $Y$ gives  a transversal of $Y$ in $G$. It remains to check that the same  holds modulo $Z$.
   But now notice that $F$ modulo $Z$ reduces to the set 
   \[
\left\{ x_t - \hat{y}_j + f(y_i +\hat{y}_j +c_t) \: | \: i\le r_2, j\le \frac{l_1}{r_2}, t\le r_3\right \}
   \]
   because $B \leq Z$ and $\hat{x}_j +\hat{y}_j = \hat{z}_j \in Z$. We aim to show that all  elements in the above set lie in distinct $Z$-cosets. Assume the contrary and let  $x_t - \hat{y}_j + f(y_i +\hat{y}_j +c_t ) =   x_{t'} - \hat{y}_{j'} + f(y_{i'} +\hat{y}_{j'} +c_{t'}) +z $ for some $z \in Z$ and some $(i, j, t) \neq (i', j', t')$. Then $x_t -x_{t'}  \in X \cap (Y+Z) = \hat{X} $,  forcing $t=t'$. Hence $ - \hat{y}_j + f(y_i +\hat{y}_j +c_t ) =  - \hat{y}_{j'} + f(y_{i'} +\hat{y}_{j'} +c_{t}) +z$ and thus $z \in Z \cap Y= C$. So we end up with  $ f(y_i +\hat{y}_j +c_t)- \hat{y}_j \equiv  f(y_{i'} +\hat{y}_{j'} +c_{t})    - \hat{y}_{j'}  \pmod{C}$ for some $(i, j) \neq (i', j')$, contradicting the choice of $f$. 
   
   This completes the proof of Case 2 and the proof of the theorem.

\end{proof}

\section{When G  = X+Y  }\label{sec:sumOfTwo}
This section addresses the case  the $p$-group$G$ is given as   $G = X+Y$, not necessarily as a direct sum, and shows that it reduces to the direct sum case covered in \autoref{Thm:Direct-Sum-of-Two}. The proof of \autoref{Thm:Sum-of-Two}, the section's main theorem, uses induction on the subgroup index. Let us remark that while the induction proceeds smoothly for odd order groups, the prime $p=2$ requires a special case verification, handled in the proposition below. Although the proposition is  stated for arbitrary prime $p$ with an identical proof regardless of parity, 
we remind our reader that a simplified proof for $p$ odd exists where a "good" function $f$ could be defined as $f(x)=-x$. But for $p=2$, existence of a "good" $f$ relies on \autoref{prop:f-transversal}.   
 
\begin{proposition}\label{prop.missing-X+Y}
Let \(G\) be a finite abelian \(p\)-group of order \(|G|=n^2r^3\), and let \(X, Y, Z\) be subgroups of \(G\) of order \(|X|=nr^2\) such that \(X\cap Y\cap Z = \{0\}\) and \(G = X + Y\).  Assume further that  $1< r=|A|=|B| $ and  $|C| \geq r$, where $A= X \cap Y, B= X \cap Z$ and $C= Y \cap Z$. Let  $C_1 \leq C$ of order $r$.  If  $X/ (A+B), Y / (A+C_1)$ and $Z/ (B+C_1)$ are cyclic groups of order $n$  then the subgroups \(X, Y, Z\) have a common transversal in \(G\).
\end{proposition}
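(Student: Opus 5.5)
The plan is to build the common transversal explicitly. The construction copies Case~2 of \autoref{Thm:Direct-Sum-of-Two}, with the bijection $f$ supplied by \autoref{prop:f-transversal}.

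\textbf{Setup.} The orders work out as follows. Since $G=X+Y$ we have $|G|=|X||Y|/|A|$, so $|Y|=nr^2$ and the common index is $[G:X]=[G:Y]=[G:Z]=nr$. Moreover $Z\le X+Y$, so $\hat Z=Z$ and $\hat Y=Y$.

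Because $X\cap Y\cap Z=\{0\}$, the sums $A\oplus B\le X$, $A\oplus C_1\le Y$ and $B\oplus C_1\le Z$ are direct, each of order $r^2$.

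Fix a generator $\bar z_0$ of $Z/(B\oplus C_1)$ and write $z_0=x_0+y_0$ with $x_0\in X$, $y_0\in Y$. Put $z_k=kz_0=x_k+y_k$ for $0\le k<n$, where $x_k=kx_0$ and $y_k=ky_0$. Following the argument of \autoref{lem:correspondence-Transversals}, I expect $\{x_k\}$ to be a transversal of $A\oplus B$ in $X$ and $\{y_k\}$ to be a transversal of $A\oplus C_1$ in $Y$. The cyclicity hypotheses and the equal orders $n$ are what make this matching possible. If necessary, I would adjust $z_0$ so that $x_0$ and $y_0$ generate the cyclic quotients $X/(A+B)$ and $Y/(A+C_1)$.

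\textbf{Construction.} Enumerate $B=\{b_j\}$ and $C_1=\{c_j\}$. Apply \autoref{prop:f-transversal} to the $p$-group $Y$, with $A\oplus C_1$ in place of $A\oplus B$ (so $C_1$ plays the role of the proposition's $B$) and with transversal $\{g_k\}=\{y_k\}$. This gives $f:Y\to Y$ such that:
\begin{itemize}
\item the elements $f(y_k+c_j)$ are pairwise distinct modulo $A$;
\item the elements $f(y_k+c_j)-y_k$ are pairwise distinct modulo $C_1$.
\end{itemize}
The candidate transversal is then
\[
F=\{\,x_k+b_i+f(y_k+c_j)\;:\;0\le k<n,\ i,j\le r\,\},
\]
possibly after reindexing so that $|F|=nr$ as required. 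The natural choice is to tie $i$ to $j$ through a fixed bijection $B\to C_1$, so that $F$ is indexed by $(k,j)$ only.

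\textbf{Checking the three subgroups.}
\begin{itemize}
\item Modulo $Y$: the elements reduce to $x_k+b_j$, which run over the $nr$ cosets of $A$ in $X$. Since $X/A\cong G/Y$, this is a transversal of $Y$.
\item Modulo $X$: the elements reduce to $f(y_k+c_j)$, which are distinct modulo $A$. This gives a transversal because $G/X\cong Y/A$.
\item Modulo $Z$: use $x_k\equiv -y_k$ and $b_j\equiv 0\pmod Z$. The set reduces to $\{f(y_k+c_j)-y_k\}\subseteq Y$. Two of these elements are congruent modulo $Z$ exactly when they are congruent modulo $Y\cap Z=C$.
\end{itemize}

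\textbf{Main obstacle.} The proposition only gives distinctness modulo $C_1$, whereas the $Z$-check needs distinctness modulo $C$, and $C$ may be strictly larger than $C_1$.

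My first attempt would be to exploit the cyclic layer. Distinctness modulo $C$ reduces to distinctness modulo $C_1$ provided the $k$-component of $f(y_k+c_j)-y_k$ in $Y/(A\oplus C_1)$ is determined by $k$ injectively, and provided $C\cap(A\oplus C_1+\langle\text{that component}\rangle)=C_1$. To arrange this, I would choose the auxiliary group in the proof of \autoref{prop:f-transversal} to be $Y/(A\oplus C_1)\times A$, with its first-coordinate permutation compatible with the cyclic generator $y_0$. If this fails, the fallback is to quotient first by $C/C_1$-data. Concretely, replace $C_1$ by $C$ and $A$ by a subgroup of $A$ of the right order, and check whether the cyclic hypotheses survive that replacement.

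Getting this compatibility between the Hall--Paige bijection and the cyclic structure is the step I expect to be hardest.
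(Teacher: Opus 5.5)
There is a genuine gap, and it is structural rather than the technical issue you flag. It already appears in your setup. Any decomposition $z_0=x_0+y_0$ with $x_0\in X$, $y_0\in Y$ forces $x_0=z_0-y_0\in X\cap(Y+Z)=\hat{X}$. By \eqref{eq:lr}, $|\hat X|=nr^3/|C|$, so $[X:\hat X]=|C|/r$. Hence when $|C|>r$ the elements $kx_0$ cannot form a transversal of $A\oplus B$ in $X$, however $z_0$ is adjusted.

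In fact your whole set $F$ lies in the subgroup $\hat X+Y$. This subgroup has index $|C|/r$ in $G$ and contains both $Y$ and $Z$, since every $z\in Z$ is $x+y$ with $x\in\hat X$. So $F$ misses entire cosets of $Y$ and of $Z$, for every choice of $f$. Your $Z$-check shows the same thing by pigeonhole: the reduced elements $f(y_k+c_j)-y_k$ all lie in $Y$, which meets only $|Y|/|C|=nr^2/|C|<nr$ cosets of $Z$.

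So no Hall--Paige bijection, however compatible with the cyclic structure, can rescue this shape of $F$. Your fallback also fails: \autoref{prop:f-transversal} needs equal orders, so it would require a subgroup of $A$ of order $|C|>|A|$. Your argument is correct only when $|C|=r$. Then $C_1=C$, $\hat X=X$, and it is a mirror image of the paper's proof. But the case $C_1<C$ is exactly the one needed when the proposition is invoked (with $C_1=K$) in \autoref{Thm:Sum-of-Two}.

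The missing idea has two parts. First, add an extra index that covers $X/\hat X$ and $C/C_1$ simultaneously. Second, place the Hall--Paige-type map where both relevant intersections have order exactly $r$.

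The paper works with the cyclic quotients $\hat X/(A\oplus B)$, $Y/(A\oplus C)$ and $Z/(B\oplus C)$, all of order $nr/|C|$. It takes generators $\hat z_0=\hat x_0+\hat y_0$ from \autoref{lem:correspondence-Transversals}. It chooses transversals $\{x_t\}$ of $\hat X$ in $X$ and $\{d_t\}$ of $C_1$ in $C$, both indexed by $t\le |C|/r$. It then applies \autoref{prop:f-transversal} inside $\hat X$ to $A=\hat X\cap Y$ and $B=\hat X\cap Z$, and takes
\[
F=\{\, j\hat y_0+c_i+f(j\hat x_0+b_i)+d_t+x_t \,\}.
\]
The three checks go as follows.
\begin{itemize}
\item Modulo $X$, use $Y\cap(C+X)=A+C$ and $C\cap(C_1+X)=C_1$.
\item Modulo $Y$ and modulo $Z$, the identity $X\cap(\hat X+Y)=X\cap(\hat X+Z)=\hat X$ first separates the index $t$.
\item After that, $\hat X\cap Y=A$ and $\hat X\cap Z=B$ reduce everything to the two defining properties of $f$.
\end{itemize}
The larger group $C$ enters only through $c_i+d_t\in Y\cap Z$. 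It is never the modulus in a condition on $f$.
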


\begin{proof}
With our notation, $r_1=r_2=r$ and $T_X= A \oplus B$ has order $r^2$, where $l_2=l_3= nr^2$ and $l_1 r_3 = nr^3$ (see \eqref{eq:lr} for the relations of $l_i, r_i$ ). Hence $\hat{Y} = Y $ and $\hat {Z}= Z$ while $\hat{X}$ has index $\frac{nr^2}{l_1} =\frac{r_3}{r}$ in $X$.   Furthermore the  groups $\hat{X}/ T_X, \hat{Y}/T_Y $ and  $\hat{Z}/T_Z, $  are all cyclic by hypothesis (as quotients of cyclic groups), and they all have order $\frac{l_1}{r^2}= \frac{nr }{r_3}$. The situation is described in \autoref{fig:myfigureProp2}.
\begin{figure}[h]
\includegraphics[width=0.6\textwidth]{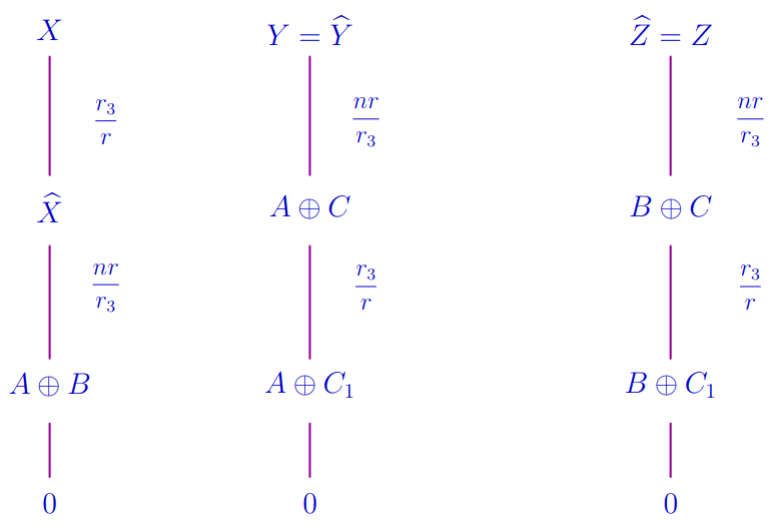}
\caption{$X,Y,Z$ along with their  involved subgroups}
\label{fig:myfigureProp2}
\end{figure}

In view of \autoref{lem:correspondence-Transversals}, we can pick  $\hat{z}_0 \in \hat{Z}$  with $\hat{z}_0 +T_Z$ being a generator of the quotient group $\hat{Z}/ T_Z$, so that    
\begin{equation}\label{eq:z_0=x_0+y_0}
\hat{z}_0 = \hat{x}_0 + \hat{y}_0
\end{equation}
for  $\hat{x}_0 \in \hat{X}$ and $\hat{y}_0 \in \hat{Y}$. 
Moreover,   $\hat{x}_0 +T_X$ and $\hat{y}_0 +T_Y$ are generators of $\hat{X}/T_X $ and $\hat{Y}/T_Y $ respectively. 
Hence for all $ 1 \leq j \leq \frac{nr }{r_3}$ the sets $\{ j \, \hat{x}_0 \} , \{ j \, \hat{y}_0 \}$ and $\{ j\, \hat{z}_0 \}$ are transversals of $T_X, T_Y, T_Z $ in $\hat{X}, \hat{Y}$ and $\hat{Z}$ respectively. 
Let $\{d_t\}_{ t \leq r_3/r}$ and $\{x_t\}_{t \leq r_3/r}$ be transversals of $C_1$ in $C$ and $\hat{X}$ in $X$, respectively. So 
\[
    \hat{X} = \bigsqcup_{j \leq \frac{nr }{r_3}} j \, \hat{x}_0 + T_X, \, \, 
    \hat{Y} = \bigsqcup_{j \leq \frac{nr }{r_3}} j \, \hat{y}_0 + T_Y, \, \, \, 
    \hat{Z} = \bigsqcup_{j \leq \frac{nr }{r_3}} j \, \hat{z}_0 + T_Z
    \]
    and 
    \[
    C= \bigsqcup_{t \leq \frac{r_3 }{r} } d_t + C_1, \, \, \, 
    X= \bigsqcup_{t \leq \frac{r_3 }{r} } x_t + \hat{X}.
    \]
Next we apply \autoref{prop:f-transversal} to the group $\hat{X} $ and its subgroups $A$ and $ B= \{ b_i \mid  i \leq r \}$. This yields a function $f : \hat{X} \to \hat{X}$ satisfying  
\begin{align}
f(j \, \hat{x}_0 + b_i )  &\not \equiv  f(j' \, \hat{x}_0 + b_{i'} ) \pmod A  \\
f(j \, \hat{x}_0 + b_i ) - j \,\hat{x}_0  &\not \equiv  f(j' \, \hat{x}_0 + b_{i'} )  - j' \,\hat{x}_0 \pmod B
\end{align}
for all $(i, j ) \neq (i', j') $. 
Let us  write $C_1= \{ c_i \mid i \leq  r\}$ and  consider the set 
\[
F= \left\{ j \, \hat{y}_0 + c_i +f(j \, \hat{x}_0 +b_i) + d_t + x_t \mid  j \leq \frac{nr }{r_3}, \, i \leq r, \, t \leq r_3/r \right\} 
\]
 Clearly $|F|= nr = [G:X]$. We claim that $F$ is the desired common transversal of $X, Y, Z$. 

We first observe that $F \pmod{X}$ simplifies to the set $\{ j \, \hat{y}_0 + c_i + d_t \}$ for valid indices $i, j$ and $t$.  If $j\, \hat{y}_0 + c_i + d_t \equiv j'\, \hat{y}_0 + c_{i'} + d_{t'} \pmod X$ then 
$(j -j') \, \hat{y}_0 \in Y \cap (C+X) = C + (Y \cap X ) = C+A=T_Y$, so $j = j'$. Therefore, $d_t - d_{t'} \in C \cap (C_1 + X) = C_1$, implying $t = t'$. Consequently, $c_i - c_{i'} \in C_1 \cap X = \{0\}$, demonstrating that all elements of $F$ are inequivalent modulo $X$.

Similarly  $F$ modulo $Y$ reduces to $\{ f(j \, \hat{x}_0 +b_i) +x_t \}$. If 
$ f(j \, \hat{x}_0 +b_i) +x_t \equiv f(j' \, \hat{x}_0 +b_{i'}) +x_{t'} \pmod Y $ then the first indices that coincide are $t=t'$ owing to $x_t -x_{t'}  \in X \cap (\hat{X} +Y) = \hat{X}$. Thus $f(j \, \hat{x}_0 + b_i )  \equiv  f(j' \, \hat{x}_0 + b_{i'} ) \pmod Y $. But  $f$ maps to $\hat{X}$ and $\hat{X} \cap Y = A$, making the last equivalence modulo $Y$ an equivalence modulo $A$. The definition of $f$ makes $(i, j) = (i', j')$ as required. 

Finally $F$ modulo $Z$ simplifies to $\{ j \, \hat{y}_0 + f(j \, \hat{x}_0 +b_i) +x_t \} \equiv  \{ -j \, \hat{x}_0 + f(j \, \hat{x}_0 +b_i) +x_t  \} \pmod {Z}$,  by \eqref{eq:z_0=x_0+y_0}. 
As previously, if  $f(j \, \hat{x}_0 +b_i) - j \, \hat{x}_0  +x_t \equiv f(j' \, \hat{x}_0 +b_{i'})- j' \, \hat{x}_0 +x_{t'} \pmod Z$, then $t = t'$ because $x_t -x_{t'} \in X \cap (\hat{X} + Z) = \hat{X}$. 
So $f(j \, \hat{x}_0 +b_i) - j \, \hat{x}_0  \equiv f(j' \, \hat{x}_0 +b_{i'})- j' \, \hat{x}_0  \pmod Z$. But $Z\cap \hat{X} = B$ and thus $f(j \, \hat{x}_0 +b_i) - j \, \hat{x}_0  \equiv f(j' \, \hat{x}_0 +b_{i'})- j' \, \hat{x}_0  \pmod B$ which in turn implies $(i, j)=(i', j')$,  by the definition of $f$. This shows that all elements of $F$ are inequivalent modulo $Z$, and the proposition follows.
\end{proof}

We are now ready to prove our next  theorem. Let us observe that the following  theorem  along with \autoref{Thm:Direct-Sum-of-Two},  implies  that if $G=X+Y$ is a $p$-group, that   is not a direct sum of two of the subgroups $X, Y, Z$, then a common transversal always exists. 

\begin{theorem}\label{Thm:Sum-of-Two}
 Let $X, Y, Z$ be proper subgroups of an abelian $p$-group $G$ with equal index such that $G = X + Y$ and $X \cap Y \cap Z = \{0\}$. Then $\T_G(X, Y, Z) = \emptyset$ if and only if $p=2$, the groups $X, Y, Z$ are cyclic and  $G = X \oplus Y = X \oplus Z = Y \oplus Z$.
\end{theorem}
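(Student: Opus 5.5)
The plan is to prove only the ``if'' direction of existence; the other direction is immediate. If $p=2$, the groups are cyclic and $G=X\oplus Y=X\oplus Z=Y\oplus Z$, then $A=X\cap Y=\{0\}$ and \autoref{Thm:Direct-Sum-of-Two} already says $\T_G(X,Y,Z)=\emptyset$. Conversely, if $A=\{0\}$ then $G=X\oplus Y$ and \autoref{Thm:Direct-Sum-of-Two} gives exactly the stated dichotomy. So it remains to show that a common transversal exists whenever $G=X+Y$ and $A=X\cap Y\neq\{0\}$. I would do this by induction on $|G|$ (equivalently on $|A|$), keeping $X\cap Y\cap Z=\{0\}$ throughout.

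\textbf{Bookkeeping.} Since $G=X+Y$ we have $\hat Y=Y$ and $\hat Z=Z$. Using \eqref{eq:orderG} and \eqref{eq:lr}, this gives $|G|=m^2/r_1$ and $l_2=l_3=m$, hence $r_1=r_2$ and $l_1r_3=mr_1$. So $r:=|A|=|B|>1$, and we may assume $|C|=r_3\ge r$ after possibly swapping the roles of $Y$ and $Z$ (which is allowed once one checks $G=X+Z$ too, using $|X+Z|=m^2/r_2=|G|$).

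\textbf{Main case.} Choose $C_1\le C$ of order $r$; this is possible because we are in a $p$-group. Then $A+C_1\le Y$ and $B+C_1\le Z$ both have order $r^2$, as does $A+B\le X$. Write $n=m/r^2$, so $|G|=n^2r^3$.

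The key dichotomy is whether the three quotients $X/(A+B)$, $Y/(A+C_1)$ and $Z/(B+C_1)$ are all cyclic. If they are, \autoref{prop.missing-X+Y} directly produces a common transversal. If one of them, say $X/(A+B)$, is not cyclic, I would pass to a smaller instance. Choose a subgroup $P\le A$ of order $p$ and work in a quotient or sub-configuration in which the roles are preserved, for instance replacing the triple by $(X/P,\,Y/P,\,(Z+P)/P)$ after shrinking $Z$ to a subgroup $Z_1$ of index $p$ so that the orders agree. The smaller triple has a common transversal by induction, unless it lands in the exceptional configuration. I would then lift that transversal back to $G$, using a complete mapping on the non-cyclic quotient (Hall--Paige, in the form of \autoref{lem:2-bijections} or \autoref{lem:bijectionInQuotient}) to resolve the extra $p$-fold ambiguity in each coset.

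\textbf{Expected obstacle.} The lifting step for $p=2$ is where I expect the difficulty. Choosing the smaller instance so that (a) the three indices remain equal and (b) the lift can be repaired by a Hall-type bijection requires a sum-zero condition, as in \autoref{lem:2-bijections}. That condition fails exactly when the relevant quotient is a cyclic $2$-group, and this is precisely the situation isolated by \autoref{prop.missing-X+Y}.

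One must also rule out that the inductive step lands on the exceptional triple of \autoref{Thm:Direct-Sum-of-Two}. I would handle this by noting that the exceptional configuration forces the intersection with $P$ to be trivial, i.e.\ $A=\{0\}$, which contradicts $r>1$. In that borderline case I would construct the transversal directly, in the style of Case~2 of \autoref{Thm:Direct-Sum-of-Two}, with \autoref{lem:bijectionInQuotient} applied to $Y\ge \hat Y$ and $C\neq\{0\}$.
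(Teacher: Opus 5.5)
Your reduction to the case $A\neq\{0\}$ via \autoref{Thm:Direct-Sum-of-Two} is fine, but three steps after it do not hold up.

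\textbf{The bookkeeping is wrong.} From $G=X+Y$ you get $\hat Z=Z$, but not $\hat Y=Y$. The latter means $Y\le X+Z$, i.e.\ $X+Z=G$, i.e.\ $r_2=r_1$, and this is not automatic. The relation $l_1r_3=l_2r_2=mr_1$ only yields $r_1\le r_2,r_3$. For instance, take $G=\Z_p^5$ with basis $e_0,\dots,e_4$ and
$X=\langle e_0,e_1,e_2\rangle$, $Y=\langle e_0,e_3,e_4\rangle$, $Z=\langle e_1,e_2,e_0+e_3\rangle$. Then $G=X+Y$ and $X\cap Y\cap Z=\{0\}$, yet $|A|=p$, $|B|=p^2$ and $\hat Y=\langle e_0,e_3\rangle\neq Y$. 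So the normalization $|A|=|B|=r$ and the swap of $Y$ and $Z$ (which would need $G=X+Z$) are only available when $r_1=r_2$. The same goes for your dichotomy built on \autoref{prop.missing-X+Y}, which requires $|A|=|B|$.

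\textbf{The inductive step is never carried out.} You do not say how the lifted transversal is repaired. The obstruction you anticipate there, a sum-zero condition failing for cyclic $2$-quotients, does not actually arise.

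\textbf{The exceptional smaller triple is mishandled.} If $(X/P,Y/P,(Z_1+P)/P)$ is exceptional, then $(X/P)\cap(Y/P)=A/P$ must be trivial. This forces $A=P$ (so $|A|=p=2$), not $A=\{0\}$, so there is no contradiction with $r>1$. Your fallback to Case~2 of \autoref{Thm:Direct-Sum-of-Two} presupposes $G=X\oplus Y$, which fails exactly in this situation.

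\textbf{How to repair it.} Your skeleton is sound once fixed, and it then gives a different and shorter proof than the paper's.

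Take $P\le A$ of order $p$ and any $Z_1\le Z$ of index $p$. Since $P\cap Z=\{0\}$, the sum $Z_1+P=Z_1\oplus P$ has order $m$, and $X\cap Y\cap(Z_1\oplus P)=P$. Hence $X/P$, $Y/P$, $(Z_1\oplus P)/P$ satisfy the hypotheses of the theorem in $G/P$, with the same index and with $(X/P)\cap(Y/P)=A/P$.

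Lift a common transversal to $F\subseteq G$. It is a transversal of $X$, $Y$ and $Z_1\oplus P$. So each coset of $Z\oplus P$ contains exactly $p$ elements of $F$. Translating these by suitable elements of $P\le X\cap Y$ puts them into the $p$ distinct $Z$-cosets of that coset without changing their $X$- and $Y$-cosets. No complete mapping is needed.

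Now induct on $|A|$, with base case \autoref{Thm:Direct-Sum-of-Two}. The only possible failure is an exceptional smaller triple. In that case $X\cap Z_1=Y\cap Z_1=\{0\}$, so $|B|,|C|\le[Z:Z_1]=2$. Using $r_1\le r_2,r_3$, this gives $|A|=|B|=|C|=2$ and $Z=Z_1\oplus B$. Then $X/(A+B)$, $Y/(A+C)$ and $Z/(B+C)$ are cyclic of order $m/4$. These are precisely the hypotheses of \autoref{prop.missing-X+Y} with $r=2$ and $C_1=C$.

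\textbf{Comparison with the paper.} The paper inducts on $[G:X]$ instead. It quotients by $N=K\oplus L\oplus A$ of order $r_1^3$, where $K\le C$ and $L\le B$ have order $r_1$. It then splices a common transversal of the quotient triple with one of $K\oplus A$, $K\oplus L$, $L\oplus A$ inside $N$. It treats $r_1^2=m$ and $r_1=1$ separately, and relies on Theorem~B of \cite{ALS24} for cyclic triples. The repaired version of your argument uses only \autoref{Thm:Direct-Sum-of-Two} and \autoref{prop.missing-X+Y}.
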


\begin{proofof}
If $X, Y, Z$ are cyclic, the result follows from Theorem B in~\cite{ALS24}. Otherwise, assume $X, Y, Z$ are not all cyclic. We will show that (in this non-cyclic case) $\T_G(X, Y, Z) \neq \emptyset$, from which the theorem follows.

We proceed by induction on $[G:X]$, with the base case $[G:X]=p$ covered in \autoref{Lem:Index-p} initiating the induction. 

Since $Z \leq X+Y$, we have $\hat{Z}= Z$, thus $l_3= m$, and \eqref{eq:lr} simplifies to
\[
l_1r_3= l_2r_2=mr_1.
\]
Also $l_3 \geq l_1, l_2$ as $l_i \leq m$ and thus $r_1 \leq r_2, r_3$ and $r_1^2 \leq r_1 r_2 \leq l_1  \leq m$.  In addition, equation \eqref{eq:orderG}  implies   $|G|= \frac{m^2}{r_1}$ and the index of $X$  in $G$ is $\frac{m}{r_1}$. At this point,  where the index $[G:X]$ is fixed, let us state the inductive hypothesis for clarity; assume $H= H_1+H_2$ is an abelian $p$-group and  $H_1, H_2, H_3$ are  subgroups of the same order, that   intersect trivially and are not all cyclic, then they share a common transversal in $H$ as long as their index is  strictly less than  $\frac{m}{r_1}$. 

Because $G$ is a $p$-group, the fact that $r_1 \leq r_2, r_3$ ensures the existence of subgroups $L \leq B$ and $K \leq C $  of order $r_1$. Then  
\begin{align}
|X+K|&= |X \oplus K| = mr_1\\
|Y+L|&= |Y \oplus L| = mr_1\\
|Z+ A |&= |Z \oplus A |= mr_1. 
\end{align}
The group $K+L+A$ is in fact a direct sum that we denote by  $N := K \oplus L \oplus A$. This can be verified by noting, for example, that $K \cap (L+A) \leq C \cap (B +A) \leq C \cap X = \{0\}$ (the other intersections are similar). Moreover, $|N|= r_1^3$, and \autoref{fig:myfigure3} illustrates the selected subgroups. 

 \begin{figure}[h]
\includegraphics[width=0.5\textwidth]{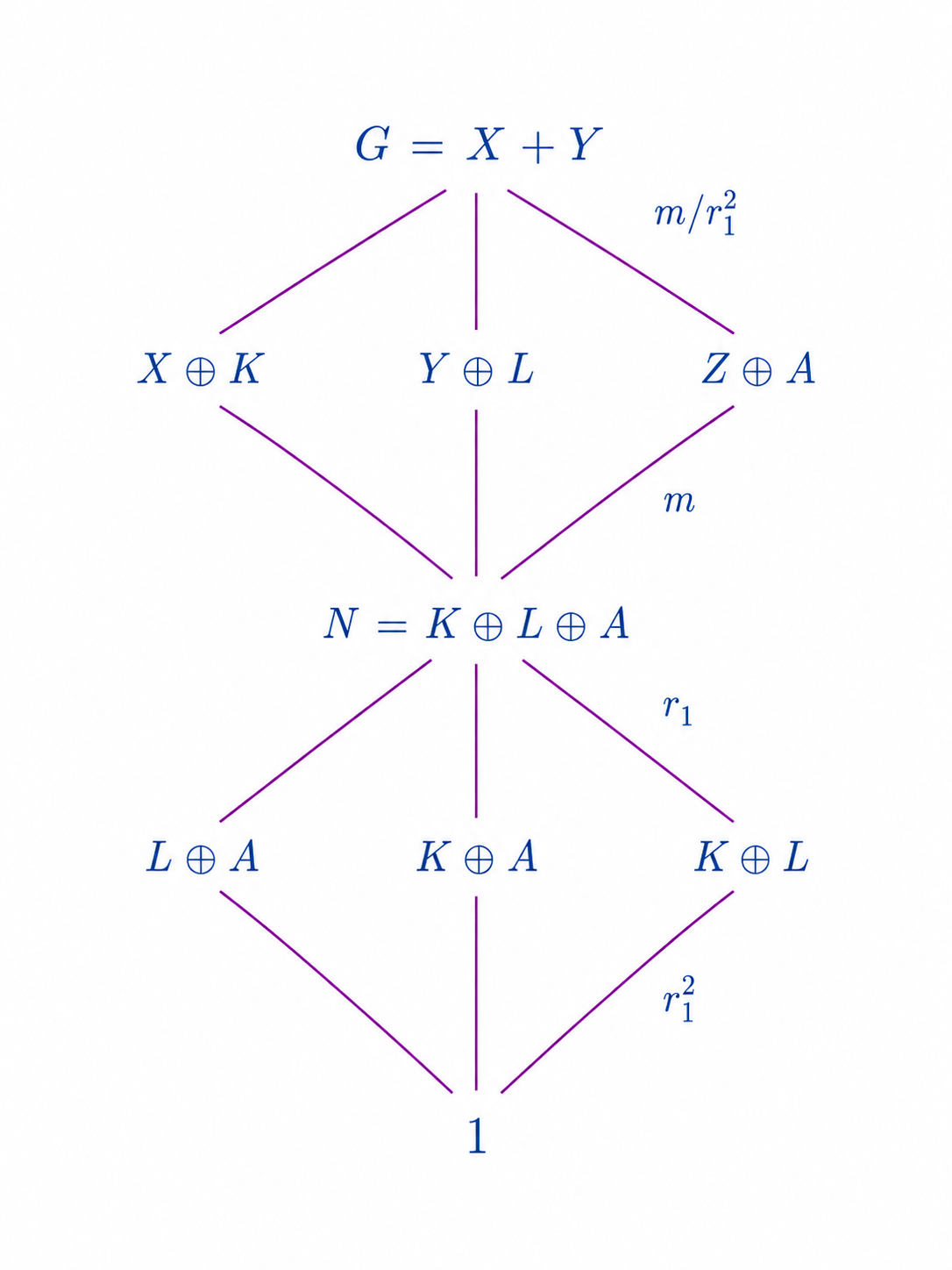}
\caption{The groups involved in this case}
\label{fig:myfigure3}
\end{figure}

We next distinguish three cases for $r_1$. 

{\bf Case 1.} ($r_1 \neq 1$ and $r_1^2 \neq m$): \\
In this case, we can apply the inductive hypothesis twice to get a common transversal for $X, Y, Z$. 

On the one hand, the subgroups  $X\oplus K, Y\oplus L$ and $Z \oplus A $ are 
proper in $G$ (since $r_1^2 \neq m$) of the same order $mr_1$ and   $G$ remains the sum of two of them. In addition, their index is   $\frac{m}{r_1^2} $. 
We can easily verify that 
\[
N= K \oplus L \oplus A= (X\oplus K) \cap ( Y\oplus L)  \cap  (Z \oplus A).
\]
Consider the quotient group $\widebar{G}= G /N$ and its subgroups $\widebar{X}:= (X\oplus K) / N , \widebar{Y} : = (Y\oplus L)/N$, and $\widebar{Z} := (Z \oplus A) /N$, each with index $\frac{m}{r_1^2}$ in $\widebar{G}$. Clearly $1< \frac{m}{r_1^2} < \frac{m}{r_1}$, as $r_1^2 < m$ and  $1< r_1$. If $\widebar{X}, \, \widebar{Y}, \, \widebar{Z}$  are not all cyclic, the inductive hypothesis applies and yields a common transversal in $\widebar{G}$. If, on the other hand, $\widebar{X},\, \widebar{Y},\, \widebar{Z}$ are all cyclic, then Theorem B of \cite{ALS24} applies to this smaller triple. Therefore,  $\widebar{X}, \widebar{Y},$ and $\widebar{Z}$ do not have a common transversal in $\widebar{G}$ only if all are cyclic $2$-groups  and    
\begin{equation}\label{eq:widebarG}
\widebar{G}= \widebar{X} \oplus \widebar{Y}  = \widebar{X} \oplus \widebar{Z}=\widebar{Y} \oplus \widebar{Z}.
\end{equation}
Now 
\[
\widebar {Z} =\frac{Z\oplus A}{N}= 
\frac{Z+N}{N} \cong \frac{Z}{Z \cap N} = \frac{Z}{L\oplus K}, 
\]
and thus $Z/ (L \oplus K)$ is a cyclic group. So  $T_Z/(L \oplus K) = (B \oplus C)/(L\oplus K) \leq Z/(L \oplus K)$ should be a cyclic group, forcing $r_1=r_2$ or $r_1=r_3$. Without loss  we may assume $r_1=r_2$  and thus $L=B$ and $l_2 = l_3=m$ (by \eqref{eq:lr}). If $\widebar{Z} \cong  \Z_{2^k} $ for some $k \geq 1$, then $m =|Z|= |\widebar{Z}|r_1^2=  2^k r_1^2$ and $|G|= 2^{2k} r_1^3$. 

But now we are in the situation of \autoref{prop.missing-X+Y} (with $n=2^k$, $r=r_1 >1$, and $C_1=K$) and a   common transversal for $X, Y, Z$ in $G$ exists, proving  our induction. Thus, we may assume that \eqref{eq:widebarG} does not occur in which case $\T_{\widebar{G}}(\widebar{X}, \widebar{Y}, \widebar{Z})  \neq  \emptyset$ and thus  $\T_G(X\oplus K, Y \oplus L, Z \oplus (X \cap Y) ) \neq \emptyset$,   by \autoref{lem:factor}.  Let 
$\{ g_i \} _{i \leq \frac{m}{r_1^2}} $  be a common transversal of $X\oplus K, Y\oplus L$ and $Z \oplus A $ in $G$, that is  
\begin{equation}\label{eq:Sum2-g}
g_i \not\equiv g_{i'}  \bmod {X+K},  \, \, \bmod {Y+L}, \, \,  \bmod {Z+A}, 
\end{equation}
  for all   $ 1 \leq i \neq i' \leq m/ r_1^2 $.
  
On the other hand, the groups $K\oplus A,\, K \oplus L, \, L \oplus A$ intersect trivially,  have the same order $r_1^2 $ and  are proper noncyclic subgroups of $N$ (since $r_1 \neq 1$). Their index in $N$ is $r_1 < m/r_1$ (since $r_1^2 \neq m$), and $N= (K\oplus A) + (K \oplus L) $. Therefore, the inductive hypothesis applies, producing a common transversal $\{ w_j \}_{j \leq r_1}$ of $K\oplus A,\, K \oplus L$, and $ L \oplus A$ in $N$, that is,
\begin{equation}\label{eq:Sum2-w}
w_j  \not\equiv w_{j'}  \bmod {K \oplus A },  \, \, \bmod {K \oplus L}, \, \,  \bmod {L \oplus A}, 
\end{equation}
for all  $ 1 \leq j  \neq j' \leq r_1$.

We claim that 
\[
F = \left\{ g_i + w_j \mid i \leq m/r_1^2, \, j \leq r_1 \right\},
\]
is a common transversal of $X$, $Y$, and $Z$ in $G$. Since $|F|= m/r_1$, we only need to verify that the $g_i +w_j$ are uniquely determined modulo $X$, $Y$,  and $Z$.
If $g_i +w_j \equiv g_{i'} +w_{j'} \pmod X$, then $g_i -g_{i'} \in X + N = X+K$. By the conditions in \eqref{eq:Sum2-g}, this implies $i = i'$. Thus, $w_j -w_{j'} \equiv 0 \pmod X$, so $w_j - w_{j'} \in X \cap N = (L+ A) +(X \cap K)= L+ A$, where the first equality follows from Dedekind's modular law since $L +A  \leq X$ and the last equality from  $X \cap K \leq X \cap Y  \cap Z = \{0\}$. Finally, equation \eqref{eq:Sum2-w} implies that $w_j \equiv w_{j'} \bmod {L+ A}$ only if $j =j'$. This proves that $F$ is a complete system of coset representatives of $X$ as claimed. 

The arguments for the remaining two cases $\bmod Y$ and $\bmod Z$ are similar, and we omit the details. We only remark that  regarding  $Y$, we use the equations $Y+N = Y+L$ and $Y\cap N = K + A +(Y \cap L)= K + A$. For $Z$, we use $Z+N= Z+ A $ and $Z \cap N= (L+K) +(Z \cap A)= L+K$.

This concludes Case 1, demonstrating that $\T_G(X, Y, Z) \neq \emptyset$ and thus proving the inductive step. 

{\bf Case 2.} ($r_1^2 = m$): \\
In this case $G$ is of a very specific type. 
Indeed, as $m= r_1^2 \leq r_1 r_2 \leq l_1 \leq m$ and $m=r_1^2 \leq r_1 r_3 \leq l_2 \leq m$, we have $r_1 = r_2 = r_3$ and $l_1 = l_2 = l_3=m$. Thus,
\begin{align}
G &= A \oplus B \oplus C\\
X&=T_X= A \oplus B\\
Y&=T_Y= A\oplus  C\\
Z&=T_Z= B\oplus C.
\end{align}
Let  $A= \{a_i \mid i \leq r_1\}$,  $B = \{b_i \mid i \leq r_1\}$ and  $C = \{c_i \mid i \leq r_1\}$. Then the set 
\[
F= \left\{ a_i+b_i+c_i \mid i \leq r_1 \right\}
\]
 is a common transversal for $X, Y $ and $Z$ in $G$, as can be easily verified. Hence $T_G(X, Y, Z) \neq \emptyset$, as desired.

{\bf Case 3.} ($r_1= 1$):\\
This forces $G$ to be a direct sum $G = X \oplus Y$,  a case already covered in \autoref{Thm:Direct-Sum-of-Two} according to which  $T_G(X, Y, Z) \neq \emptyset $ (as $X, Y, Z$ are not all cyclic). 

This completes the inductive argument and the proof of the theorem.  
\end{proofof}

\section{ The general case for $p$-groups }\label{sec:GeneralCase}

This section considers the general case where subgroups $X$, $Y$, and $Z$ all contribute to $G$. As in \autoref{Thm:Sum-of-Two}, we proceed by induction on the index $[G:X]$, reducing to the case where $G$ is the sum of two of the subgroups. As earlier, the odd prime case is straightforward, but 
the prime $p=2$ requires special attention due to an exceptional case treated in \autoref{prop.missing-X+Y+Z-2}. Let us start with a lemma that will be needed for the proof of \autoref{prop.missing-X+Y+Z-2}.
\begin{lemma}\label{lem:X+Y+Z-hat{X}Direct}
Let $G$ be a finite abelian $2$-group. Suppose $X, Y, Z$ are proper subgroups of $G$ of order $m>2$ with trivial pairwise intersections, such that $G = X+Y+Z$. Assume further that there exists a non-trivial subgroup  $1 < K \leq   X$ with  $X = \hat{X}  \oplus K$, where  $\hat{X}= X \cap (Y+Z)$.  Then $ \T_G(X, Y, Z) \neq \emptyset$. 
\end{lemma}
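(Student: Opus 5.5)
Since $A=B=C=\{0\}$, relation \eqref{eq:lr} gives $l_1=l_2=l_3=:l$ and \eqref{eq:orderG} gives $|G|=m^3/l$. The plan is to make the problem explicitly combinatorial. Because $Y\cap Z=\{0\}$, the sum $Y+Z=Y\oplus Z$ has order $m^2$. Because $K\cap (Y+Z)\le K\cap \hat X=\{0\}$, we get
\[
G=K\oplus Y\oplus Z, \qquad |K|=m/l>1 .
\]
Moreover $\hat X\le Y\oplus Z$, and $\hat X$ meets $Y$ and $Z$ trivially. So by \autoref{lem:correspondence-Transversals} the projections of $\hat X$ to $Y$ and to $Z$ are $\hat Y$ and $\hat Z$, each of order $l$. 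We may write $\hat z_h=\hat x_h+\hat y_h$, where $h$ runs over an index set for $\hat X$, so that $\hat z_h\equiv \hat y_h \pmod{X}$ up to sign conventions.

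We look for a transversal of the form
\[
F=\{\,k+z+g_k(z)\mid k\in K,\ z\in Z\,\},
\]
where each $g_k:Z\to Y$ is a bijection. Then $|F|=m^2/l=[G:X]$. Modulo $Y$ the elements reduce to $k+z$, which exhaust $K\oplus Z$. Modulo $Z$ they reduce to $k+g_k(z)$, which exhaust $K\oplus Y$ because each $g_k$ is bijective. Modulo $X$ the $K$-part disappears, so the only real condition is the following: the $m\cdot|K|$ elements $z+g_k(z)$ must be pairwise distinct modulo $\hat X$ in $Y\oplus Z$. The count matches exactly, since $[Y\oplus Z:\hat X]=m^2/l$.

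To construct the $g_k$, fix transversals $\{z_s\}$ of $\hat Z$ in $Z$ and $\{y_t\}$ of $\hat Y$ in $Y$, each of size $m/l=|K|$, and identify $K$ with the index set of the $t$'s via any Latin-square labelling $t=s\ast k$. Modulo $\hat X$ we have $\hat z_h\equiv -\hat y_h$, up to the fixed sign choice. Hence an element $z_s+\hat z_h+y_t+\hat y_{h'}$ is determined modulo $\hat X$ by the triple $(s,t,\hat y_{h'}-\hat y_h)$, and this gives a bijection between the $\hat X$-cosets and such triples. Defining $g_k(z_s+\hat z_h)=y_{s\ast k}+\hat y_{\sigma(h)}$, the requirement becomes that $h\mapsto \sigma(h)$ and $h\mapsto \hat y_{\sigma(h)}-\hat y_h$ are both bijections of $\hat Y$. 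That is, $\sigma$ must be essentially a complete map of $\hat Y$.

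This is the main obstacle. When $\hat Y$ is a cyclic $2$-group no complete map exists. To get around it, I would not decouple $t$ from $h$. Instead I would let the $\hat Y$-component and the coset index $t$ be produced jointly by a bijection of the whole group $Y$. I would apply \autoref{lem:2-bijections} to $Y$ with $H=\hat Y$, which is proper because $K\neq 1$, and with $S=\{y_t\}$. This gives a bijection $\psi$ of $Y$ such that $\psi(y_t+\hat y_h)-\hat y_h$ is also a bijection of $Y$. For fixed $s$, set $g_k(z_s+\hat z_h)=\psi(y_{s\ast k}+\hat y_h)$. Modulo $\hat X$, the element $z+g_k(z)$ then corresponds to $z_s+\big(\psi(y_{s\ast k}+\hat y_h)-\hat y_h\big)$. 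As $(k,h)$ ranges, this second summand covers $Y$ bijectively for each fixed $s$, and the $z_s$ separate the $\hat Z$-cosets. Together these give distinctness modulo $\hat X$.

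The details to verify are the following. First, the $\hat X$-coset of $z_s+y$ determines the pair $(s,y)$; this follows from $\hat X\cap Y=0$ and the fact that the $Z$-projection of $\hat X$ is exactly $\hat Z$. Second, each $g_k$ is a bijection $Z\to Y$; this needs $t=s\ast k$ to be a permutation in $s$ for fixed $k$, which holds for a Latin square. Finally, the degenerate case $l=1$, where $\hat Y$ is trivial and \autoref{lem:2-bijections} does not apply, must be handled directly. There $X=K$, and taking $g_k(z)$ to be a fixed isomorphism-type bijection shifted by $k$ should work; this is where I expect $m>2$ to be used, to avoid a $V_4$-type parity obstruction.
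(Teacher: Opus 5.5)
Your proposal is correct and is essentially the paper's proof. After writing $G=K\oplus Y\oplus Z$, you use the same first part of \autoref{lem:2-bijections} for $1<\hat Y<Y$ and the same Latin-square coupling of the $K$-coordinate with the coset indices (the paper's $k_i+k_t$); you merely organize the key verification as distinctness modulo $\hat X$ in $Y\oplus Z$ rather than modulo $Z$. Two small points: choose the enumeration so that $\hat z_h+\hat y_h\in\hat X$ (i.e.\ negate the $\hat y_h$ from \autoref{lem:correspondence-Transversals}) so the sign matches \autoref{lem:2-bijections}; and for $l=1$ your construction (indeed just $\psi=\mathrm{id}$) works for every $m$, since $G=X\oplus Y\oplus Z$ has no parity obstruction, so $m>2$ is not needed there.
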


\begin{proof}
In the context of the lemma, we have $r_1=r_2=r_3=1$ and $l_1=l_2=l_3=:l$ (where the notation is as in \eqref{eq:lr}). So $|G|= m^3/l$ and the index of $X$ in $G$ is $m^2/l$. 
If $l=1$ then $\hat{X} = \hat{Y} = \hat{Z} = \{0\}$ and thus $G= X \oplus Y \oplus Z$. This was the first case where the existence of a common transversal was established (see Theorem 1 in~\cite{kol97}); for completeness, we state (without proof) the actual transversal 
\[
F=\left\{ x_i+y_i+ x_j+z_j \mid i , j \leq m  \right \} 
\]
where $X= \{x_i\}, Y= \{y_i\}$ and $Z= \{ z_i \}$ for $i \leq m$.

Thus we may assume now that $l >1$.   Let $K = \{ k_i \mid i \leq m/l \}$. For $i \leq m/l$, let $\{ y_i \}$ and $\{ z_i \}$ be complete sets of coset representatives of $\hat{Y}$ and $\hat{Z}$ in $Y$ and $Z$, respectively. 
By \autoref{lem:correspondence-Transversals},  we can enumerate $\hat{X} = \{ \hat{x}_j \}$, $\hat{Y} =\{\hat{y}_j\}$, and $\hat{Z}= \{\hat{z}_j\}$ such that
\begin{equation} \label{eq:hat{X}Direct-1}
    \hat{z}_j =\hat{x}_j + \hat{y}_j 
\end{equation}
for all $j \leq l$. This works because  $T_X=T_Y = T_Z=\{0\}$, so the coset representatives in \autoref{lem:correspondence-Transversals} correspond directly to elements of $\hat{X}$, $\hat{Y}$, and $\hat{Z}$.

As $m \neq l$ we can apply  \autoref{lem:2-bijections} to $1< \hat{Y} < Y $. Thus  each element  in $Y$ can be uniquely expressed as \( y_i + \hat{y}_j \) for indices $i \leq m/l$ and $j \leq l$,  and there exists a bijection $f: Y \to Y $ such that $f(y_i +\hat{y}_j ) - \hat{y_j}$ is also a bijection on $Y$.

Now consider the following set
\[
F=\left\{ k_i +\hat{x}_j +f(y_i+\hat{y}_j) + z_t +k_t \mid  i,t \leq m/l, \, \,  j \leq l\right \}, 
\]
 of order $m^2/l = [G:X]$. We claim that $F$ is a common transversal of $X, Y, Z$.

Modulo $X$, the set is $\{ f(y_i+\hat{y}_j) + z_t \mid i,t \leq m/l, \, \,  j \leq l \}$. We need to show that all its elements are inequivalent modulo $X$. If $f(y_i+\hat{y}_j) + z_t = f(y_{i'}+\hat{y}_{j'}) + z_{t'} +x $, for some $x \in X$,  then $z_t - z_{t'} \in Z \cap (Y+X) = \hat{Z}$, implying $t = t'$. This forces $f(y_i+\hat{y}_j) = f(y_{i'}+\hat{y}_{j'}) +x $. Since $X \cap Y = \{0\}$, we have $f(y_i+\hat{y}_j) = f(y_{i'}+\hat{y}_{j'})$, which contradicts the bijectivity of $f$ unless $(i, j) = (i', j')$.

Modulo $Y$, the set $\{ k_i +\hat{x}_j + z_t +k_t \mid  i,t \leq m/l, \, \,  j \leq l \}$ also contains inequivalent elements. Indeed, if $ k_i +\hat{x}_j + z_t +k_t \equiv k_{i'} +\hat{x}_{j'} + z_{t'} +k_{t'} \pmod{Y}$, then $t=t'$ (since $z_t - z_{t'} \in \hat{Z}$, as earlier). Consequently, $k_i +\hat{x}_j \equiv k_{i'} +\hat{x}_{j'} \pmod{Y}$, implying $k_i +\hat{x}_j - (k_{i'} +\hat{x}_{j'}) \in X\cap Y =\{0\}$. Thus, $k_i +\hat{x}_j = k_{i'} +\hat{x}_{j'}$, forcing $(i,j)= (i',j')$ as the sum $X=\hat{X}\oplus K$ is direct.

Finally, modulo $Z$ and under the light of  \eqref{eq:hat{X}Direct-1},  the set becomes 
\[
\{k_i +\hat{x}_j +f(y_i+\hat{y}_j)  +k_t \}=\{k_i -\hat{y}_j +f(y_i+\hat{y}_j)  +k_t \},  
\]
for $i,t \leq m/l$ and $ j \leq l$. 
If $k_i -\hat{y}_j +f(y_i+\hat{y}_j)  +k_t= k_{i'} -\hat{y}_{j'} +f(y_{i'}+\hat{y}_{j'})  +k_{t'}+z$, then $k_i+k_{t}-k_{i'}-k_{t'} \in K \cap (Z+Y)\leq K \cap (X \cap (Z+Y) ) = K \cap \hat{X}= \{0\}$. Thus, $k_i+k_{t}-k_{i'}-k_{t'} =0$ and $f(y_i+\hat{y}_j) -\hat{y}_j \equiv  f(y_{i'}+\hat{y}_{j'})-\hat{y}_{j'} \pmod {Z}$. Since $Y \cap Z= \{0\}$, we have $f(y_i+\hat{y}_j) -\hat{y}_j = f(y_{i'}+\hat{y}_{j'})-\hat{y}_{j'}$. As $f(y_i +\hat{y}_j ) - \hat{y_j}$ is a bijection of $Y$, it follows that $(i, j)=(i',j')$. Consequently, $k_i+k_{t}-k_{i'}-k_{t'} =0$ simplifies to $k_{t}- k_{t'} =0$, implying $t=t'$.

This completes the proof of the lemma.
\end{proof}

Now we can prove:
\begin{proposition}\label{prop.missing-X+Y+Z-2}
Let $G$ be a finite abelian $2$-group. Let $X, Y, Z $ proper, not all cyclic,  subgroups of $G$ of the same order $m$, with trivial pairwise intersections and $\hat{X} \cong \hat{Y} \cong  \hat{Z} \cong  \Z_{2^n} $, for some $n \geq 1$, where $\hat{X}= X \cap (Y+Z), \, \hat{Y}= Y \cap (X+Z)$ and  $\hat{Z}= Z \cap (X+Y)$.  Then $\T_G(X, Y, Z) \neq \emptyset$.
\end{proposition}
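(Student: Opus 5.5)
The plan is to reduce to \autoref{lem:X+Y+Z-hat{X}Direct} whenever possible, and to handle the remaining configuration by adapting that lemma's explicit construction. Throughout, $r_1=r_2=r_3=1$ and $l_1=l_2=l_3=l=2^n$. Hence $|G|=m^3/l$, $[G:X]=m^2/l$, and $[X:\hat X]=[Y:\hat Y]=[Z:\hat Z]=m/l$. By \autoref{lem:correspondence-Transversals} I fix enumerations $\hat z_j=\hat x_j+\hat y_j$ for $j\le l$, with $\{\hat x_j\}=\hat X$, $\{\hat y_j\}=\hat Y$ and $\{\hat z_j\}=\hat Z$.

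\textbf{Step 1 (reduction to a proper hat subgroup).} Since $X,Y,Z$ are not all cyclic, one of them, after relabelling say $X$, is non-cyclic. In particular $X\neq\hat X$, because $\hat X$ is cyclic, and $m\ge 4>2$. The hypotheses of the statement are symmetric in $X,Y,Z$, so this relabelling is harmless. Then $1<\hat X<X$, and likewise $1<\hat Y<Y$ and $1<\hat Z<Z$, since the three indices are equal.

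\textbf{Step 2 (complemented case).} If $\hat X$ has a complement $K$ in $X$, so that $X=\hat X\oplus K$ with $K\neq 1$, then \autoref{lem:X+Y+Z-hat{X}Direct} gives $\T_G(X,Y,Z)\neq\emptyset$ directly. The same holds, after relabelling, if $\hat Y$ is complemented in $Y$ or $\hat Z$ is complemented in $Z$.

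\textbf{Step 3 (no hat subgroup is complemented).} This is the main obstacle: a cyclic subgroup of an abelian $2$-group need not be a direct summand, as with $\langle 2\rangle\le \Z_4$ inside $\Z_4\times\Z_2$. Here I would imitate the set used in the proof of \autoref{lem:X+Y+Z-hat{X}Direct}. The subgroup $K$ was used there in two places:
\begin{itemize}
\item[(i)] as a transversal of $\hat X$ in $X$, which made the $X$-components distinct modulo $Y$;
\item[(ii)] through the identity $K\cap(Y+Z)=0$, which let the term $k_i+k_t$ be separated modulo $Z$.
\end{itemize}
Instead of $K$, I take any transversal $\{x_i\}_{i\le m/l}$ of $\hat X$ in $X$. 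I then look for a bijection $\sigma$ of the index set $\{1,\dots,m/l\}$ and a bijection $f$ of $Y$ from \autoref{lem:2-bijections} (applicable since $1<\hat Y<Y$), and consider
\[
F=\{x_i+\hat x_j+f(y_i+\hat y_j)+z_t+x_{\sigma(t)} \mid i,t\le m/l,\ j\le l\}.
\]

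The checks modulo $X$ and modulo $Y$ go through exactly as in the lemma. In both, the index $t$ is recovered first from $z_t$, using $Z\cap(X+Y)=\hat Z$. Modulo $Y$, the remaining part $x_i+\hat x_j$ then runs over all of $X$ exactly once.

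Modulo $Z$ the set becomes $\{x_i+x_{\sigma(t)}-\hat y_j+f(y_i+\hat y_j)\}$. Reducing further modulo $\hat X+Y+Z$ shows that the class of $x_i+x_{\sigma(t)}$ in $X/\hat X$ determines the pair $(i,t)$ only up to the fibres of addition in $X/\hat X$. So the real requirement is this. Let $\bar x_i$ denote the class of $x_i$ in $X/\hat X$. I need a map $(i,t)\mapsto\bar x_i+\bar x_{\sigma(t)}$, combined with the $\hat X$-correction carried by $x_i+x_{\sigma(t)}-\bar{(\cdot)}$ and with the bijection $f(y_i+\hat y_j)-\hat y_j$, to separate all triples $(i,j,t)$.

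I would first try to choose $x_t$ and $\sigma$ via Hall's theorem on $X/\hat X$, in the spirit of \autoref{lem:bijectionInQuotient}, so that the carries $x_i+x_{\sigma(t)}-x_{\text{rep}}\in\hat X$ are absorbed by relabelling $\hat x_j$. The non-complemented hypothesis says precisely that the involution of $\hat X$ lies in $2X$, and I expect to exploit it by choosing $x_0\in X$ with $2x_0$ generating a suitable subgroup of $\hat X$. If this direct route fails, the fallback is to pass to the quotient by the order-$2$ subgroup of $\hat X\oplus\hat Y$ spanned diagonally with $\hat Z$ and argue as in Case 1 of \autoref{Thm:Sum-of-Two}. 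However, that quotient does not preserve trivial pairwise intersections, so it would have to be checked carefully.
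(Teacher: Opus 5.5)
\textbf{There is a genuine gap.} Steps 1 and 2 are fine, but Step 3 is the only case not already settled by \autoref{lem:X+Y+Z-hat{X}Direct}, and you do not prove it. You only describe a construction you would try, plus a fallback whose validity you yourself doubt.

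The direct route also does not close as you set it up. Modulo $Z$, a coincidence between two elements of your $F$ only gives $h:=x_i+x_{\sigma(t)}-x_{i'}-x_{\sigma(t')}\in X\cap(Y+Z)=\hat X$. It does not give $h=0$, which is what happened in the lemma, where $K\cap\hat X=0$ killed this term. Write $h=\hat x_k=\hat z_k-\hat y_k$ and use $Y\cap Z=0$. Then the coincidence occurs exactly when $\bigl(f(y_i+\hat y_j)-\hat y_j\bigr)-\bigl(f(y_{i'}+\hat y_{j'})-\hat y_{j'}\bigr)=\hat y_k$. \autoref{lem:2-bijections} rules this out only when $h=0$.

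Nonzero carries cannot be avoided in your Step 3 situation. Every pair of representatives occurs as $(x_i,x_{\sigma(t)})$. So if all carries vanished, then $\{x_i-x_{i_0}\}$, with $x_{i_0}\in\hat X$, would be closed under addition. It would then be a complement of $\hat X$ in $X$, contradicting your hypothesis. You would therefore need an extra condition tying $f$ to the carries of the $x_i$, and nothing in the proposal supplies it.

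\textbf{The missing idea.} $\hat X$ need not be complemented in $X$ itself; it suffices that it be complemented in a suitable subgroup of $X$, after which you lift.

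\begin{itemize}
\item Since $X$ is a non-cyclic abelian $2$-group, it has at least three involutions, while the cyclic $\hat X$ has only one. So there is $K\le X$ of order $2$ with $K\cap\hat X=0$; set $X_1:=\hat X\oplus K$.
\item Choose $\hat Y<Y_1\le Y$ and $\hat Z<Z_1\le Z$ of order $2^{n+1}$, and put $G_1:=X_1+Y_1+Z_1$.
\item Since $\hat X\le \hat Y+\hat Z\le Y_1+Z_1$, you get $X_1\cap(Y_1+Z_1)=\hat X$. Hence \autoref{lem:X+Y+Z-hat{X}Direct} applies inside $G_1$ (note $|X_1|=2^{n+1}\ge 4$). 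It gives a common transversal $\{g_i\}$ of $X_1,Y_1,Z_1$ in $G_1$.
\item Take transversals $\{x_j\},\{y_j\},\{z_j\}$ of $X_1,Y_1,Z_1$ in $X,Y,Z$. The set $\{g_i+x_j+y_j+x_t+z_t\}$ has size $m^2/2^n=[G:X]$ and is a common transversal of $X,Y,Z$ in $G$.
\item Each check peels off the indices one at a time using Dedekind identities: $Y\cap(G_1+X+Z)=Y_1+\hat Y=Y_1$, $X\cap(G_1+Z)=X_1+\bigl(X\cap(Y_1+Z)\bigr)=X_1$, $Z\cap G_1=Z_1+\hat Z=Z_1$, and their symmetric versions.
\end{itemize}

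This is the paper's argument. It avoids your carry problem entirely: the complement $K$ exists inside $X_1$, and passing from $X_1$ up to $X$ uses only transversals, never additive structure.
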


\begin{proof} 
Without loss we may assume $X$ is non-cyclic, so $\hat{X} \neq X$. Then $\hat{X}, \hat{Y},$ and $\hat{Z}$ are all proper subgroups of $X, Y,$ and $Z$, respectively, since all families of groups (hat and non-hat)  have the same order.   Because $\mathbb{Z}_{2^n} \cong \hat{X} \leq X$, there exists a subgroup $K \leq X$ of order $2$ such that $X_1:= K \oplus \hat{X} \leq X$, and thus $|X_1| = 2^{n+1} \leq m$. 
Suppose $\hat{Y} < Y_1 \leq Y$ and $\hat{Z} < Z_1 \leq Z$ are subgroups of $Y$ and $Z$ respectively, with order $2^{n+1}$. If $X = X_1$, then by \autoref{lem:X+Y+Z-hat{X}Direct} the proposition holds. Thus,  we may assume $X_1 < X$ and $2^{n+1} < m$.

Now we look at the group $G_1:=X_1+Y_1+Z_1$. Clearly the intersection of any two of $X_1, Y_1, Z_1$  is trivial. Also 
\[
\hat{X} \leq  \hat{X}_1= X_1 \cap (Y_1 +Z_1)  \leq X \cap (Y+Z) = \hat{X}
\]
and thus $\hat{X}_1= \hat{X}$. Hence  $X_1 = \hat{X}_1 \oplus K$,   and \autoref{lem:X+Y+Z-hat{X}Direct} applies to $G_1$. 
Let us note that $|G_1|= 2^{2n+3}$ and   $[G_1: X_1]= 2^{n+2}$.
Let $\{ g_i \mid i \leq  2^{n+2}\} \leq G_1$
be a common transversal of $X_1, Y_1, Z_1$ in $G_1$, that is, 
\begin{equation*}
g_i \not\equiv g_{i'}  \bmod {X_1},  \, \, \bmod {Y_1}, \, \,  \bmod {Z_1}, 
\end{equation*}
  for all   $ 1 \leq i \neq i' \leq 2^{n+2}$.

For every $1 \leq i\leq m/2^{n+1}$, 
let $\{ x_i\}, \{y_i\}$ and $\{z_i\}$ be transversals of $X_1$, $Y_1$ and $Z_1$ in $X, Y$, and $Z$ respectively, so 
\[
X= \bigsqcup_{i \leq \frac{m}{2^{n+1}}} x_i +X_1, \, \, Y= \bigsqcup_{i \leq \frac{m}{2^{n+1}}} y_i +Y_1, \, \, Z= \bigsqcup_{i \leq \frac{m}{2^{n+1}}} z_i +Z_1. 
\]

We claim the set 
\[
F =\left\{ g_i +x_j +y_j +x_t +z_t  \mid  i \leq 2^{n+2},  \,\, j, \, t \leq m/2^{n+1} \right\}  
\]
is a common transversal for $X, Y, Z$. 
We first observe that $|F|= m^2/2^n=[G:X]$, as $|G|= m^3/2^n$.

Modulo $Z$, the set $F$ becomes $\{ g_i +x_j +y_j +x_t \mid i \leq 2^{n+2}$ and $ j, \, t \leq m/2^{n+1}\}$. To prove that its elements are inequivalent modulo $Z$, suppose $g_i +x_j +y_j +x_t = g_{i'}+x_{j'}+ y_{j'} +x_{t'} +z$ for some $z \in Z$. This implies $y_j- y_{j'} \in Y \cap (G_1 +X+Z)$. We have
\[
Y \cap (G_1 +X+Z) = Y \cap (Y_1+X+Z) = Y_1 +(Y \cap (X +Z) ) = Y_1+ \hat{Y} = Y_1, 
\]
where the second equality follows from Dedekind's modular law. Hence $y_j- y_{j'} \in Y_1$ implying  $j = j'$. 
Now $x_t - x_{t'} = g_{i'}-g_i +z \in X \cap (G_1+Z)$. But  $X \cap (G_1+Z) = X_1 + (X \cap (Y_1 +Z))  = X_1 $, where the last equality follows from   $\hat{X} \leq X \cap (Y_1 +Z) \leq X \cap (Y+Z) = \hat{X}$. 
So $x_t \equiv x_{t'} \pmod {X_1}$ forcing $t = t'$. Finally, if  $g_i -g_{i'} = z \in Z \cap G_1$ then $g_i \equiv g_{i'} \pmod {Z_1}$ (because $Z \cap G_1 = Z_1 + (Z \cap (X_1 +Y_1) ) = Z_1+\hat{Z} = Z_1$). Thus $i=i'$,  proving that the elements in $F$ modulo $Z$ are all inequivalent.

The arguments for Modulo X and Modulo Y are similar to the Z-case, so we omit them. The proposition now follows. 

\end{proof}

Now we can prove:
\begin{theorem}\label{Thm:General-case}
  Assume $G$ is an  abelian $p$-group. Let $X, Y, Z$ be proper subgroups of the same index in $G$ with $X \cap Y \cap Z= \{0\}$ and $G=X+Y+Z$.  Then $\T_G(X, Y, Z) =  \emptyset $ if and only if $p=2$, the groups $X, Y, Z$ are all cyclic and   $G = X \oplus Y = X \oplus Z = Y \oplus Z$.
\end{theorem}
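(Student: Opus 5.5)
We argue by induction on the common index $[G:X]$, mirroring the proof of \autoref{Thm:Sum-of-Two}. The base case $[G:X]=p$ is \autoref{Lem:Index-p}. If $G$ equals the sum of two of the subgroups, say $G=X+Y$ after relabeling, we are in the setting of \autoref{Thm:Sum-of-Two}, which already gives the full characterization. The ``if'' direction is contained there, since the obstruction forces $G=X\oplus Y$. Hence it suffices to show that $\T_G(X,Y,Z)\neq\emptyset$ whenever no two of $X,Y,Z$ sum to $G$. In that situation all of $\hat X,\hat Y,\hat Z$ are proper in $X,Y,Z$: for instance $\hat Z=Z$ would give $Z\le X+Y$ and hence $G=X+Y$. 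For cyclic $X,Y,Z$ we may instead invoke Theorem B of \cite{ALS24}, so we may assume the three groups are not all cyclic.

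The core of the inductive step is to enlarge the three subgroups inside a smaller ``sum of two'' configuration and then glue, exactly as in Case~1 of \autoref{Thm:Sum-of-Two}. First suppose some pairwise intersection is nontrivial, say $r_1=|A|>1$. Choose a subgroup $N\le T=A\oplus B\oplus C$ of the form $N=K\oplus L\oplus A'$, with suitably chosen subgroups of $C$, $B$ and $A$ of a common order $r$ (possible since $G$ is a $p$-group). We then pass to $\overline G=G/N$ with the subgroups $(X+N)/N$, $(Y+N)/N$ and $(Z+N)/N$, which have equal order and index strictly less than $[G:X]$. Induction, or Theorem B of \cite{ALS24} when these are all cyclic, gives a common transversal $\{g_i\}$ there unless the exceptional Klein-type configuration occurs. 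Next we find a common transversal $\{w_j\}$ of $X\cap N$, $Y\cap N$ and $Z\cap N$ inside $N$; this is the direct-sum situation handled in Case~2 of \autoref{Thm:Sum-of-Two}. We then check, via Dedekind's modular law as before, that $\{g_i+w_j\}$ is a common transversal.

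If all pairwise intersections are trivial, we instead quotient by nothing and reduce through the hat groups. We pick a subgroup $X_1$ with $\hat X<X_1\le X$ of minimal size, and similarly $Y_1$ and $Z_1$, and work in $G_1=X_1+Y_1+Z_1$. We then use the lifting argument from the proof of \autoref{prop.missing-X+Y+Z-2}: a common transversal of $X_1,Y_1,Z_1$ in $G_1$, combined with transversals $\{x_j\},\{y_j\},\{z_t\}$ of $X_1,Y_1,Z_1$ in $X,Y,Z$, yields the set $\{g_i+x_j+y_j+x_t+z_t\}$. The inequivalences follow from $Y\cap(G_1+X+Z)=Y_1$ and its analogues. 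When $X_1=X$, the subgroup $G_1$ is the whole group and we fall back on \autoref{lem:X+Y+Z-hat{X}Direct}, or on \autoref{Lem:G=X+Y+Z} together with the direct-sum construction when $l=1$.

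The main obstacle will be the exceptional case in which the reduced triple is of Klein type, i.e.\ cyclic $2$-groups pairwise summing directly. This can only happen for $p=2$. When it occurs in the quotient $\overline G$, we will show, as in \autoref{Thm:Sum-of-Two}, that the cyclicity forces two of the $r_i$ to coincide and the relevant quotients $X/T_X$ etc.\ to be cyclic. We then construct the transversal directly using \autoref{prop:f-transversal} and \autoref{lem:bijectionInQuotient}, in the style of \autoref{prop.missing-X+Y}, and we expect to need a three-summand analogue of that proposition. When the exceptional case occurs for the hat groups in the trivial-intersection case, $\hat X\cong\hat Y\cong\hat Z$ are cyclic $2$-groups, and this is precisely \autoref{prop.missing-X+Y+Z-2}. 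Getting the bookkeeping of orders \eqref{eq:lr} right, so that the induced subgroups in the quotient have equal order and strictly smaller index, is the routine but delicate part.
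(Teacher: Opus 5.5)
\textbf{There is a genuine gap.} Your case split does not cover all configurations, and in one of your two cases the reduction can stall.

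\emph{The mixed case is missing.} The subgroup $N=K\oplus L\oplus A'$ with summands of a common order $r>1$ in $C$, $B$, $A$ requires all three of $A,B,C$ to be nontrivial; $r_1>1$ alone is not enough. Consider $G=\langle b\rangle\oplus\langle c\rangle\oplus\langle x'\rangle\oplus\langle y'\rangle\oplus\langle z'\rangle$, where $b,c,z'$ have order $2$ and $x',y'$ have order $4$, with $X=\langle b,x'\rangle$, $Y=\langle c,y'\rangle$, $Z=\langle b,c,z'\rangle$. Here $r_1=1$, $r_2=r_3=2$, no two of the subgroups sum to $G$, and they are not all cyclic. Whatever labelling you choose, you are forced to take $r=1$, so the index does not drop. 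Building $N$ only from pieces of $B$ and $C$ gives $X+N$, $Y+N$, $Z+N$ of different orders. Your second case assumes all pairwise intersections trivial, so it does not apply either.

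\emph{The trivial-intersection case can stall.} If you choose $X_1$ minimal strictly above $\hat X$, there is no reduction when $[X:\hat X]=p$. Your fallback, \autoref{lem:X+Y+Z-hat{X}Direct}, needs some hat group to be a direct summand, and this can fail. Take $G=\langle x\rangle\oplus\langle y\rangle\oplus\langle w\rangle\oplus\langle e\rangle\oplus\langle f\rangle\cong\Z_{p^2}^2\times\Z_p^3$ with $X=\langle x,e\rangle$, $Y=\langle y,f\rangle$, $Z=\langle x+y+w,\,e+f\rangle$. The pairwise intersections are trivial and $X,Y,Z\cong\Z_{p^2}\times\Z_p$. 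The hat groups are
$\hat X=\langle px,e\rangle$, $\hat Y=\langle py,f\rangle$, $\hat Z=\langle px+py,\,e+f\rangle$.
Each has index $p$ and contains every element of order $p$ of its group, so none has a complement.

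\emph{The missing idea} is the paper's uniform reduction. Order the subgroups so that $r_1\le r_2\le r_3$. Pick $X_1\supseteq\hat X$ and $Y_1\supseteq\hat Y$, both of order $l_3$, and set $S=X_1+Y_1$. Then $S\supseteq\hat Z$ and $Z\cap S=\hat Z$.
\begin{itemize}
\item The subgroups $(X+Y_1)/S$, $(Y+X_1)/S$, $(Z+S)/S$ have index $m^2/l_3^2<[G:X]$; in fact $G/S$ is their direct sum.
\item $X_1,Y_1,\hat Z$ form a sum-of-two configuration inside $S$, handled by \autoref{Thm:Sum-of-Two}. Its exceptional case forces trivial intersections and cyclic hat groups, which is exactly \autoref{prop.missing-X+Y+Z-2}.
\end{itemize}
Gluing $\{g_i+w_j\}$ then works for every intersection pattern with $r_1<l_3<m$. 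In your lifting language, this means taking $X_1,Y_1$ of order $l_3$ and $Z_1=\hat Z$, rather than one step above the hat groups.

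\emph{A smaller point.} The ``three-summand analogue'' of \autoref{prop.missing-X+Y} that you anticipate is unnecessary. In your first case $N\le X+Y$, so $\overline X+\overline Y=(X+Y)/N$. The Klein-type configuration in $G/N$ would therefore force $G=X+Y$, which you have already excluded.
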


\begin{proof}If $X, Y, Z$ are cyclic, the theorem follows from Theorem B in~\cite{ALS24}. Therefore, 
it suffices to show that $T_G(X, Y, Z) \neq \emptyset$ when $X, Y, Z$ are not all cyclic. Without loss of generality, assume $r_1 \leq r_2 \leq r_3$, which implies $l_1 \leq l_2 \leq l_3$,  by \eqref{eq:lr}. 

We proceed by induction on the index $[G:X]= \frac{m^2}{l_3r_1}$ to prove the existence of a common transversal in the non-cyclic case. The base case $[G:X]=p$ is established in Lemma \ref{Lem:Index-p}, initiating our induction. Assume the theorem holds for any group $H = H_1 + H_2 + H_3$, where the $H_i$ are trivially intersecting subgroups, not all cyclic, of the same order and index strictly less than $\frac{m^2}{l_3r_1}$; we aim to prove it also holds for $X, Y, Z$.

Since $\frac{l_1 r_3}{r_1} = l_3 \leq m$, there exists a subgroup $X_1$ such that $\hat{X} \leq X_1 \leq X$ and $[X_1 : \hat{X}] = \frac{r_3}{r_1}$, implying $|X_1|= l_3$. Similarly, as $l_2 r_2/r_1 = l_3 \leq m$, there exists a subgroup $Y_1 \leq Y$ with $\hat{Y} \leq Y_1$ and $[Y_1:\hat{Y}]=  \frac{r_2}{r_1}$, thus $|Y_1|= l_3$. Observe that $|X+Y_1|= |Y +X_1|=  \frac{ml_3}{ r_1}$. Furthermore, 
\[
\hat{Z} \leq Z \cap (X_1 + Y_1) \leq Z \cap (X+Y) = \hat{Z},  
\]
 where the first inequality follows from $\hat{X} + \hat{Y} \leq X_1 +Y_1$ along with equation \eqref{eq:hat-groups}. Hence,
\[
|Z +X_1 +Y_1|= \frac{m|X_1 +Y_1|}{l_3}= \frac{m l_3}{r_1}, 
\] 
 since $|X_1+Y_1|= \frac{l_3^2}{r_1}$.  The situation is described in \autoref{fig:myfigure4}.
 \begin{figure}[h]
\includegraphics[width=0.55\textwidth]{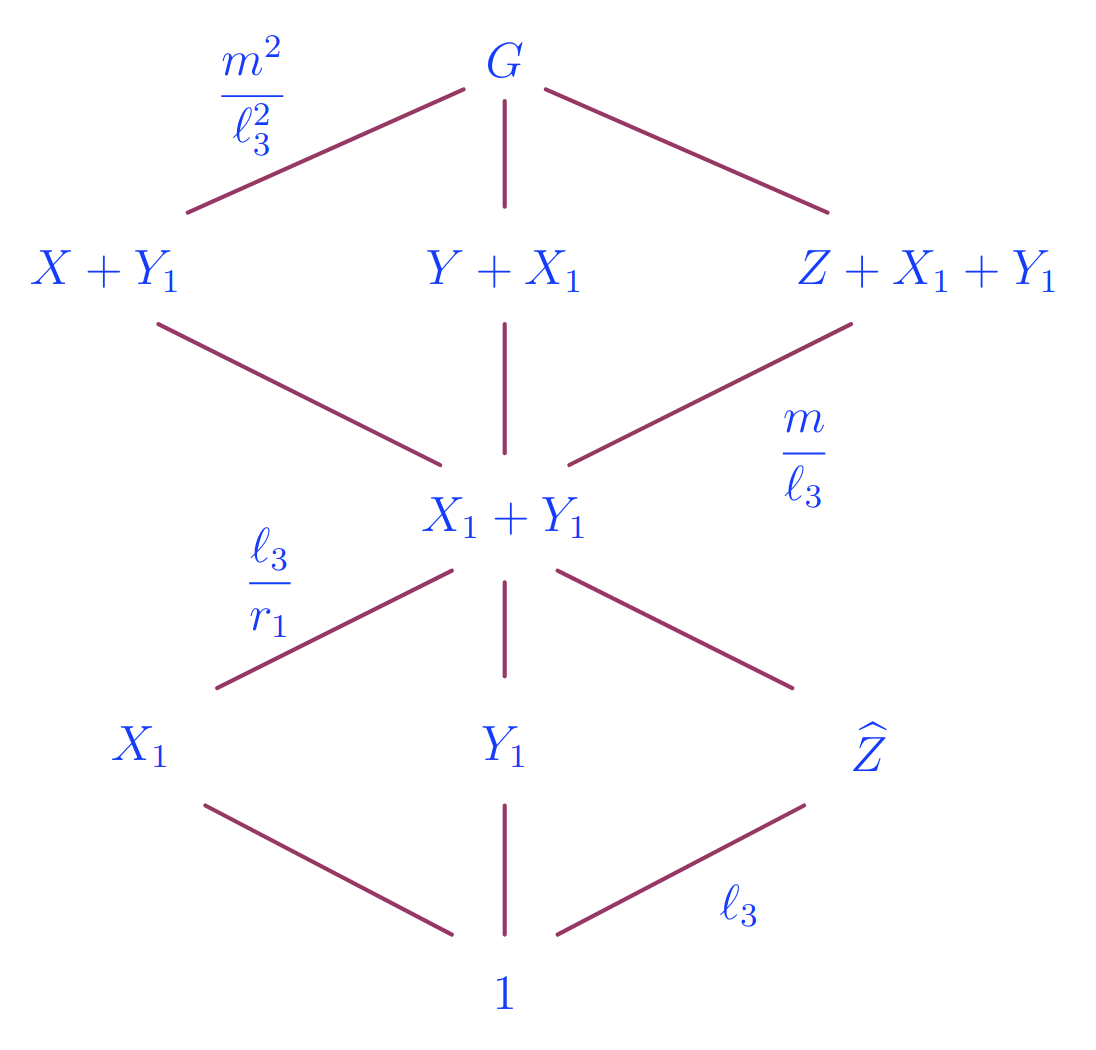}
\caption{The groups involved in this case}
\label{fig:myfigure4}
\end{figure}

 We would like to apply our inductive hypothesis twice: first to the subgroups $X+Y_1, Y +X_1, Z+X_1+Y_1$ of $G$, and second to the subgroups $X_1, Y_1, \hat{Z} $ of $X_1 +Y_1$. We distinguish  cases  for $l_3$ to ensure the inductive hypothesis is applicable. 
 
{\bf Case 1.} ($r_1 < l_3 < m$):

Let us first  consider the subgroups  $X + Y_1, Y+ X_1$ and $Z +  X_1 +  Y_1$  whose index in $G$ is  $m^2/l_3^2$ and thus they are proper (since $l_3 \neq m$) while $m^2/l_3^2 < m^2/l_3 r_1 = [G:X]$ (as $l_3 > r_1$).  
Furthermore one can show (by multiple applications of Dedekind's law), 
\[
S:= (X+Y_1) \cap (Y+X_1)  \cap (Z+X_1+Y_1) = X_1+Y_1. 
\]
Now we look at the quotient group $\widebar{G} = G/S$ and its subgroups $\widebar{X} := (X + Y_1) /S, \widebar{Y }:= (Y + X_1) /S$,
and $\widebar{Z} := (Z + X_1 +Y_1) /S$, each with index $m^2/l_3^2$ in $\widebar{G}$.
If $\widebar{X}, \, \widebar{Y}, \, \widebar{Z}$  are not all cyclic, then the inductive hypothesis applies, since their common index $m^2/l_3^2$ is strictly smaller than $[G:X]$. If, on the other hand, $\widebar{X}, \, \widebar{Y}, \, \widebar{Z}$  are all cyclic, then Theorem B of \cite{ALS24} applies to this smaller triple. Therefore, $\widebar{X}, \widebar{Y}$, and $\widebar{Z}$  fail to have
a common transversal in $\widebar{G} $ only if they are all cyclic and 
\begin{equation}\label{eq:X+Y+Z-wide}
 \widebar{G}  = \widebar{X} \oplus \widebar{Y}  = \widebar{X} \oplus \widebar{Z} = \widebar{Y} \oplus \widebar{Z}. 
\end{equation}
However, if the above equation holds, then $G = X+Y$, and \autoref{Thm:Sum-of-Two} applies to $X, Y, Z$, completing the induction.

Thus we may assume that \eqref{eq:X+Y+Z-wide} does not occur. Hence  $\T_{\bar{G}} (\bar{X}, \bar{Y}, \bar{Z}) \neq \emptyset $ and so $\T_G(X+Y_1, Y+X_1, Z+X_1+Y_1) \neq \emptyset$.  Let 
$\{ g_i  \mid i \leq m^2/ l_3^2\} $  be a common transversal of $X+Y_1, Y+X_1$ and $Z+X_1 +Y_1 $ in $G$, that is 
\begin{equation}\label{eq:Sum3-g}
g_i \not\equiv g_{i'}  \bmod {X+Y_1},  \, \, \bmod {Y+X_1}, \, \,  \bmod {Z+X_1+Y_1}, 
\end{equation}
  for all   $ 1 \leq i \neq i' \leq \frac{m^2}{l_3^2} $.
  
At this point we look at the subgroups  $X_1, Y_1, \hat{Z}$ of $X_1+Y_1$  that intersect trivially and the sum of two of them is $X_1+Y_1$. By \autoref{Thm:Sum-of-Two}, $X_1, Y_1, \hat{Z}$ share a common transversal in $X_1+Y_1$, unless  $X_1, Y_1, \hat{Z}$ are all cyclic $2$-groups and 
\begin{equation}\label{eq:X_1+Y_1}
X_1 +Y_1 = X_1 \oplus Y_1 = X_1 \oplus \hat{Z} =  Y_1 \oplus \hat{Z}.
\end{equation}
This would imply that the intersections of any two of  $X_1, Y_1$ and $\hat{Z}$ are trivial and thus  $r_1=r_2=r_3=1$. Hence   $l_1=l_2=l_3>1$ (as $l_3 > r_1=1$),  and $\hat{X}= X_1, \hat{Y}=Y_1, \hat{Z}$ are all cyclic groups.  This corresponds precisely to the exceptional situation (for non-cyclic $X, Y, Z \leq G$) handled in \autoref{prop.missing-X+Y+Z-2}, thereby establishing the induction step.
We may therefore assume that \eqref{eq:X_1+Y_1} does not hold. Consequently, there exists a common transversal $\{ w_j \mid j \leq l_3/r_1\}$ of $X_1, \, Y_1, \hat{Z}$ in $X_1 + Y_1$, namely,
\begin{equation}\label{eq:Sum3-w}
w_j  \not\equiv w_{j'}  \bmod {X_1},  \, \, \bmod {Y_1}, \, \,  \bmod {\hat{Z}}, 
\end{equation}
for all  $ 1 \leq j  \neq j' \leq l_3/r_1$.

We claim that 
\[
F = \left\{ g_i + w_j \mid i \leq m^2/l_3^2, \, j \leq l_3/r_1 \right \}
\]
is a common transversal of $X$, $Y$, and $Z$ in $G$. Since $|F|= m^2/l_3r_1$, we only need to verify that the elements $g_i +w_j$ are uniquely determined modulo $X$, $Y$,  and $Z$.

If $g_i +w_j \equiv g_{i'} +w_{j'} \pmod X$, then $g_i -g_{i'} \in X + Y_1$. By the conditions in \eqref{eq:Sum3-g}, this implies $i = i'$. Thus, $w_j -w_{j'} \equiv 0 \pmod X$, so $w_j - w_{j'} \in X \cap (X_1+Y_1)= X_1$, by  Dedekind's modular law as $(X \cap Y_1 = A \leq X_1$). Finally, equation \eqref{eq:Sum3-w} implies that $w_j \equiv w_{j'} \bmod X_1$ only if $j =j'$. Hence $g_i +w_j \not\equiv g_{i'} +w_{j'} \pmod X$ for all $(i, j) \neq (i',j')$, proving that $F$ is a complete system of coset  representatives of $X$. 

The arguments for the remaining two cases $\bmod Y$ and $\bmod Z$ are similar, and we omit the details. We only remark that  regarding  $Z$, we use the equation  $Z \cap (X_1 +Y_1)= \hat{Z}$.
This completes the proof of the induction and concludes Case 1. 

{\bf Case 2.} ($r_1=l_3$):

We have already seen,  by \eqref{eq:def.ABC} and \eqref{eq:lr},  that $r_2 r_3 \mid l_3$. So, under the assumptions $r_1 \leq r_2 \leq r_3$ and  $r_1=l_3$ we conclude that $r_1=r_2=r_3=l_1=l_2=l_3=1$. Hence 
\[
G= X \oplus Y \oplus Z.
\]
As we have already seen a common transversal,  in this case, is known to exist by  Theorem 1 in~\cite{kol97} and is given as  
\[
F=\left\{ x_i+y_i+ x_j+z_j \mid i , j \leq m  \right \} 
\]
where $X= \{x_i\}, Y= \{y_i\}$ and $Z= \{ z_i \}$ for $i \leq m$.

{\bf Case 3.} ($l_3=m$):

If $l_3 = m$, then $\hat{Z} = Z$, and $G = X+Y$, a case treated in \autoref{Thm:Sum-of-Two}. This completes the inductive argument and the  proof of the theorem.

\end{proof}

\section{Proof of \autoref{Thm:main}}\label{sec:ThmA}
To complete the proof of our main theorem, \autoref{Thm:main}, it remains to address a  missing step: passing from transversals of the subgroups $X, Y, Z$ in $G$ to transversals of their $p$-Sylow subgroups $X_p, Y_p, Z_p$ in $G_p$, and vice versa. This is precisely the content of the next lemma, and we note that Corollary 4.3 in~\cite{ALS24} is a special case of it.
\begin{lemma} \label{Lem:reduction-to-p-groups}
Assume $X, Y, Z$ are subgroups of the same order of a finite  abelian group $G= X+Y+Z$ with $X \cap Y \cap Z=\{0\}$.  Then $X, Y, Z$ share a common transversal in $G$ if and only if $X_p, Y_p$ and $Z_p$ share a common transversal in $G_p$ for all prime divisors $p$  of $|G|$.
\end{lemma}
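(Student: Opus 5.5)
We decompose $G=\bigoplus_p G_p$. Since $G$ is abelian, every subgroup splits as $X=\bigoplus_p X_p$ with $X_p=X\cap G_p$, and likewise for $Y$ and $Z$. We first record two structural facts.

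First, $|X|=|Y|=|Z|$ forces $|X_p|=|Y_p|=|Z_p|$ for every $p$, since $|X_p|$ is the $p$-part of $|X|$.

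Second, $G=X+Y+Z$ gives $G_p=X_p+Y_p+Z_p$. Indeed, projecting onto $G_p$ along the complement $\bigoplus_{q\ne p}G_q$ maps $X$ onto $X_p$, and similarly for $Y$ and $Z$. In the same way $X_p\cap Y_p\cap Z_p=\{0\}$.

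Moreover, for $g\in G$ with components $g=\sum_p g^{(p)}$, we have $g\in X$ if and only if $g^{(p)}\in X_p$ for all $p$. Hence $g\equiv g' \pmod X$ if and only if $g^{(p)}\equiv g'^{(p)} \pmod{X_p}$ for every $p$. Consequently the natural map
\[
G/X\to \prod_p G_p/X_p
\]
is an isomorphism, and the same holds for $Y$ and $Z$.

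For the "if" direction, let $F_p\in\T_{G_p}(X_p,Y_p,Z_p)$ for each prime $p$. We take
\[
F=\Bigl\{\sum_p f_p \;\Bigm|\; f_p\in F_p\Bigr\}.
\]
Then $|F|=\prod_p|F_p|=\prod_p[G_p:X_p]=[G:X]$. Two elements $\sum f_p$ and $\sum f'_p$ are congruent modulo $X$ exactly when $f_p\equiv f'_p \pmod{X_p}$ for each $p$, which forces $f_p=f'_p$ for each $p$. So the elements of $F$ are pairwise inequivalent modulo $X$, and by the cardinality count $F$ is a transversal of $X$. The same argument works for $Y$ and $Z$.

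The "only if" direction is the main obstacle, because the projection of a common transversal $F$ to $G_p$ is in general a multiset rather than a transversal. We first try the following. Fix $p$ and let $\pi_p\colon G\to G_p$ be the projection. Under the isomorphism above, $F$ corresponds to a transversal of $\prod_q G_q/X_q$. Hence the multiset $\pi_p(F)$ covers each coset of $X_p$ in $G_p$ exactly $M:=\prod_{q\ne p}[G_q:X_q]$ times, and similarly for $Y_p$ and $Z_p$. Since $M$ is coprime to $p$, a counting trick is not available. A cleaner route is to exploit the coordinate structure instead. Choose any $h\in\prod_{q\ne p}G_q$ and consider the fibre
\[
F_h=\{f\in F \mid f-\pi_p(f)\equiv h\}.
\]
This fibre, however, need not have the right size.

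So we switch to a different tactic. Consider the map from $F$ to $\prod_{q\ne p}G_q/X_q$. Each fibre over a fixed class consists of elements of $F$ whose $q$-components are fixed modulo $X_q$ for $q\ne p$. These fibres project injectively into $G_p/X_p$ and cover it, but the fibres for $Y$ and for $Z$ differ from those for $X$, so this does not directly produce a common transversal either.

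The robust fallback is to avoid the converse via $F$ altogether. We show that when some $X_p,Y_p,Z_p$ has no common transversal, $X,Y,Z$ have none. By Theorem~\ref{Thm:General-case} (applied after factoring out the intersection via Lemma~\ref{lem:factor}), the only possible failure is at $p=2$, with $X_2,Y_2,Z_2$ cyclic of order $2^n$ and $G_2=X_2\oplus Y_2=X_2\oplus Z_2=Y_2\oplus Z_2$.

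In that case we rerun the parity argument of the remark after Proposition~\ref{prop:f-transversal}, or equivalently the argument in Case 1 of Theorem~\ref{Thm:Direct-Sum-of-Two}:

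\begin{enumerate}
\item Write the elements of $F$ in coordinates adapted to $G_2=X_2\oplus Y_2$, reducing to the $2$-component.
\item Each coset of $X_2$ in $G_2$ is hit exactly $M$ times, with $M$ odd. Compute $\sum_{f\in F}\pi_2(f)$ modulo $X_2$, modulo $Y_2$ and modulo $Z_2$.
\item The sum of all elements of a cyclic $2$-group is its unique involution, and $M$ is odd, so these three sums are forced to be the involutions $e_Y\in Y_2\cong G_2/X_2$, $e_X\in X_2\cong G_2/Y_2$ and the involution of $G_2/Z_2$.
\item Compatibility then requires $e_X+e_Y\notin Z_2$. But $e_X+e_Y$ is the involution of $Z_2$, since $Z_2$ is the graph of an isomorphism $Y_2\to X_2$. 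This is a contradiction.
\end{enumerate}

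This last parity computation is where the real care is needed, and we expect it to be the crux of the lemma.
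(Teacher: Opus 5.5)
Your proposal is correct and follows the paper's route. The ``if'' direction is the product of the $p$-transversals. For the converse you appeal to Theorem~\ref{Thm:General-case} to isolate the exceptional configuration at $p=2$, and rule it out by observing that the $2$-components of a common transversal cover every coset of $X_2$, $Y_2$, $Z_2$ an odd number $M$ ($=n_q$ in the paper) of times, so their sum is pinned down by the involutions of the cyclic quotients.

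The only difference is in the last step. The paper equates the two expressions $n_qx_0+n_qy_0=n_qx_0+n_qz_0$ obtained from $G_2=X_2\oplus Y_2$ and $G_2=X_2\oplus Z_2$. You instead identify the sum as $e_X+e_Y$ and note that this is the involution of $Z_2$, even though it must lie outside $Z_2$. The exploratory fibre attempts and the factoring through Lemma~\ref{lem:factor} are unnecessary, since $X_p\cap Y_p\cap Z_p=\{0\}$ already holds, and can be dropped.
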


\begin{proof}
If $X_p, Y_p, Z_p$ share a common transversal $T_p$  in $G_p$, for every prime divisor $p$ of $|G|$ then the product $T= \prod_p T_p$ is a common transversal for $X, Y, Z$ in $G$. For the converse, common transversals exist for $X_p, Y_p,$ and $Z_p$ in $G_p$ by \autoref{Thm:General-case}, except when $p=2$, $X_2, Y_2, Z_2$ are cyclic, and $G_2 = X_2\oplus Y_2 = X_2 \oplus Z_2 = Y_2 \oplus Z_2$. Thus, to prove the lemma, it suffices to show that this case for Sylow 2-groups is impossible when $X, Y, Z$ share a common transversal in $G$. We proceed by contradiction. So assume  
\begin{equation}\label{eq:finalsum}
G_2 =X_2\oplus Y_2 = X_2 \oplus Z_2 = Y_2 \oplus Z_2
\end{equation}
with  $X_2, Y_2, Z_2$ being cyclic and let  $T= \{ t_i  \mid i \leq n \} $ be a  common transversal of  $X, Y, Z$ in $G$, where $n=[G:X]$. Let $|G|= 2^k q$ where $k$ and $q$ are integers with $q$ odd. Then $n= n_2 \times n_q$ with $n_2 = [G_2 : X_2]= |X_2|$ and $n_q= [G_q :X_q]$, where $G_q$ is the Hall $q$-subgroup of $G$. Thus, each $g \in G$ can be uniquely written as $g = g_2 +g_q$, where $g_2 \in G_2$ and $g_q \in G_q$ are its $2$-part and $2'$-part, respectively.
So, for each element  $t_i \in T$, we have 
\[
t_i = a_i + b_i,
\]
where $a_i \in G_2$ and $b_i \in G_q$. Note that the $a_i$ and $b_i$ need not be distinct for different $i$. 
Consider the sequence $A= \{ a_1, a_2,  \cdots, a_n\}$ of the  $2$-elements $a_i$ taken with multiplicities. 

Fix $a \in A$. We claim that 
\[
n_q= \left|\{ i \leq n  \mid     a_i  \equiv a \pmod{X_2} \} \right|.
\]
To prove our claim first note that if $\{c_1, \cdots c_{n_q}\}$ is a transversal of $X_{q} $ in $G_q$ then the elements $\{ a+c_j  \mid j \leq n_q \} $ are clearly inequivalent modulo $X$. Thus, for each $j = 1, \cdots , n_q$, there exists $t_j=a_j +b_j   \in T$ such that $t_j \equiv a+c_j \pmod X$. Note that the $t_j$ are distinct, and the equivalence relation implies $a_j \equiv a \pmod {X_2} $ and $b_j \equiv c_j  \pmod {X_q}$. Hence  $\left|\{ i \leq n \mid  a_i  \equiv a \pmod{X_2} \} \right|
\geq n_q$. 

For the reverse inequality, assume $a_i \equiv a_j \pmod{X_2}$. Then $t_i - t_j = a_i - a_j + b_i - b_j \equiv  b_i - b_j  \pmod X$, as $X_2 \leq X$.  Since $t_i \not\equiv t_j \pmod X$, we have $b_i \not\equiv b_j \pmod{X_q}$. Because there are $n_q$ inequivalent elements of $G_q$ modulo $X_q$, there are at most $n_q$ distinct elements $t_i$ with $a \equiv a_i \pmod{X_2}$. Thus, $\left|\{ i \leq n  \mid a_i \equiv a \pmod{X_2} \} \right| \leq n_q$, and the claim follows.

But $X_2$ is no more special than $Y_2$ and $Z_2$ and thus our claim really applies to all three subgroups, i.e.,
$n_q=\left|\{ i \leq n \mid a_i \equiv a \pmod {Y_2} \} \right|=\left|\{ i \leq n \mid a_i \equiv a \pmod {Z_2} \} \right|$.

Let us next observe that any coset of $X_2$ in $G_2$ must be represented by an element of $T$ because $T$ is a transversal of $X$ in $G$.  As $G_2 = X_2 \oplus Y_2$,  $Y_2$  is a transversal of $X_2$ in $G_2$. Thus,  for every $y \in Y_2$,  there exists  $t_i \in T$ of the form   $t_i = a_i +b_i $ so that $a_i \equiv y  \pmod {X_2}$, and by our claim, there are exactly $n_q$ such $t_i \in T$.  (Indeed, consider the coset $y + X$ in $G$. Since $T$ is a transversal, there exists exactly one element in $T$ that lies in $y\pmod X$ call it $t=a + b$ for some $a\in A, b\in G_q$. Then $y \equiv t \pmod{X}$ implying that   $a\equiv y\pmod {X_2}$.)

Similarly, there are exactly $n_q$ elements $t_i \in T$ of the form $t_i = a_i +b_i $ with $a_i \equiv z \pmod {X_2}$ for every $z \in Z$, and similarly for every other 
combination of $X_2, Y_2, Z_2$.

Since $a_i \in G_2 = X_2 \oplus Y_2$, we can write $a_i = x_i + y_i$ with $x_i \in X_2$ and $y_i \in Y_2$. The observation above implies that the sequence $\{x_i \mid i \leq n\}$ contains each element of $X_2$ exactly $n_q$ times, and similarly for $\{y_i \mid i \leq n\}$ and $Y_2$. Thus,
\[
\sum_{i=1}^n a_i = \sum_{i=1}^n x_i + \sum_{i=1}^n y_i = n_q x_0 + n_q y_0,
\]
where $x_0$ and $y_0$ are the unique involutions in $X_2$ and $Y_2$, respectively.

Similarly, since $G_2 = X_2 \oplus Z_2$, we can write $a_i = x'_i + z_i$. Applying the same argument, we have
\[
\sum_{i=1}^n a_i = \sum_{i=1}^n x'_i + \sum_{i=1}^n z_i = n_q x_0 + n_q z_0,
\]
where $z_0$ is the unique involution of $Z_2$. But now we should have $n_q y_0= n_q z_0$. Since $Y_2 \cap Z_2 = \{0\}$, this forces $n_q y_0 = n_q z_0 = 0$. As $n_q$ is odd and $y_0, z_0$ are involutions, we reach a contradiction. Thus, the lemma is proven.
 
\end{proof}

The proof of our  main theorem is now immediate. 

\begin{proofofA}
 By    \autoref{Lem:G=X+Y+Z}, we may assume that $G = X+Y+Z$. Also,   according to \autoref{lem:factor} we get 
$\T_G(X, Y, Z) \neq   \emptyset $ if and only if $\T_{G/N} (X/N, Y/N, Z/N) \neq \emptyset$. 

 If $G_{p'}$ denotes the $p'$-Hall subgroup of $G$ for a prime $p$ dividing $|G|$, then $G$ decomposes as $G = G_p \times G_{p'}$. The same type of decomposition holds for $N = N_p \times N_{p'}$ and for each of the subgroups $X = X_p \times X_{p'}$, $Y = Y_p \times Y_{p'}$, and $Z = Z_p \times Z_{p'}$. Observe that $G_p/N_p$ is the Sylow $p$-subgroup of $G/N$, and, analogously, $X_p/N_p, Y_p/N_p$, and $Z_p/N_p$ are the Sylow $p$-subgroups of $X/N, Y/N$, and $Z/N$, respectively.
Now we invoke \autoref{Lem:reduction-to-p-groups} for the quotient groups $X/N, Y/N, Z/N$ and $G/N$. Hence,
\[
\T_{G/N} (X/N, Y/N, Z/N) \neq \emptyset \quad \text{if and only if} \quad \T_{G_p/N_p} (X_p/N_p, Y_p/N_p, Z_p/N_p) \neq \emptyset
\]
for each prime divisor \(p\) of \(|G|\). By \autoref{Thm:General-case}, the proof of the theorem follows.
\end{proofofA}

We conclude the session and the paper with the proof of \autoref{thm:Lattices}.

\begin{proofofB}
By  \autoref{Lem:G=X+Y+Z}, for $R=\RR^d$, it is necessary and sufficient  for the existence of a common fundamental domain of $K,L,M$ in $\RR^d$, the existence of a common fundamental domain of $K, L,M$ inside their sum $K+L+M$. Without loss  (applying an invertible linear map) we may assume $K+L+M = \ZZ^d$. Furthermore, by  \autoref{lem:factor}, for $H = K \cap L \cap M $, a common fundamental domain of $K,L,M$ in $\ZZ^d$ exists if and only if the quotient groups
$    X:= K/H, \,  Y:= L/H$ and $Z:= M/H$
have a common fundamental domain in $G:= \ZZ^d/H$.  But $G$ is a finite abelian group,  since $H$ is a full-rank lattice in $\RR^d$ (see the  comments in  the introduction).
Now  the theorem follows from  \autoref{Thm:main}.
\end{proofofB}

\section{Examples of Lattices with no common transversal}\label{sec:Lattices} 
In this section we will provide examples of full-rank lattices (of equal volume) as subgroups of $\ZZ^2$ that do not admit a common fundamental domain.

Our first such example is about full-rank lattices $K, L, M$  whose quotients  with their common intersection $H = K \cap L \cap M$  are  cyclic groups. Consider $m\geq 1$ an odd integer and define
\begin{equation*}
    K= \begin{pmatrix}
        2^n m & 0\\
        0 & 1
    \end{pmatrix}\ZZ^2, \quad L= \begin{pmatrix}
        1 & 0\\
        0 & 2^n m
    \end{pmatrix}\ZZ^2, \quad M= \{ (k,l) \in \ZZ \times m\ZZ \: \mid  \: k \equiv \frac{l}{m} \pmod {2^n}\}.   
\end{equation*}
We note that $M = \langle (1,m), \,  (0, 2^n m ) \rangle _{\ZZ}$.  
Now it is easy to see that   \[  H =2^nm\ZZ \times 2^nm\ZZ \] and 
\[
G:=\frac{ \ZZ \times \ZZ}{H}= \frac{ \ZZ \times \ZZ}{2^n m \ZZ \times 2^n m  \ZZ}\cong  \Z_{2^nm} \times \Z_{2^nm}.
\]
Furthermore,  the quotient subgroups corresponding to $K, L, M$ are all cyclic isomorphic to $\ZZ_{2^nm}$;
\begin{align}
 X&:= K/H \cong \{0\}\times \Z_{2^nm}, \\
 Y&:= L/H \cong \Z_{2^nm} \times \{0\},\\
Z&:= M/H = \{ k(1, m ) +H  \mid  k \in \ZZ \}\cong  \Z_{2^nm }.
\end{align}
The Sylow $2$-subgroups of $X, Y$ are  $X_2= \langle  (0,m) +H \rangle $ and $Y_2= \langle  (m,0) +H \rangle $, respectively. Moreover  the Sylow $2$-subgroup of $Z$ is $Z_2=\langle  m(1,m) +H \rangle $. Thus, the requirements of \autoref{thm:Lattices}   are met, confirming that $K, L, M$ do not share a common fundamental domain.
 
 The special case where $m=1$ (under the convention that $\Z_1=\{0\}$)  yields the following full-rank lattices: 
\begin{equation*}
   K=  \begin{pmatrix}
        2^n & 0 \\
        0 & 1
    \end{pmatrix}\ZZ^2 ,\quad  L= \begin{pmatrix}
        1 & 0\\
        0 & 2^n
    \end{pmatrix}\ZZ^2, \quad M=\{(k,l)\in \ZZ^2 \: | \: k\equiv l \bmod(2^n)\}.
\end{equation*}
We remark here that for $n=1$ this is exactly the construction given in~\cite[3.1]{kol97}.

 For our next example we consider full-rank lattices $K, L, M$ where the  quotients  by their intersection $H = K \cap L \cap M$ are not all cyclic subgroups. We define  
\begin{equation*}
    K:= \begin{pmatrix}
        18 & 0\\
        0 & 1
    \end{pmatrix}\ZZ^2 , \quad  L:=\begin{pmatrix}
        1 & 0\\
        0 & 18
    \end{pmatrix}\ZZ^2, \quad M:= \{ (k,l) \in 3\ZZ \times 3\ZZ \: | \: \frac{k}{3} \equiv \frac{l}{3}
    \pmod 2\}.
\end{equation*}
Observe that $M = \langle  (3,3), \, (6,0) \rangle _{\ZZ}$ and   is 
\begin{equation*}
    H= 18\ZZ \times 18 \ZZ.
\end{equation*}
So the quotient group is 
 \begin{equation*}
     G:= \ZZ^2/H \cong \Z_{18}\times \Z_{18}.
 \end{equation*}
and the subgroups  corresponding to $K, L, M$ are 
\begin{align*}
    X&:= K/H = \{0\} \times \Z_{18},\\
     Y&:= L/H = \Z_{18} \times \{0\},\\
     Z&:= \{ (3k, \, 3l) \mid  k,l \in \Z_{6}, \, \, k\equiv l\pmod 2 \} \cong \Z_3 \times \Z_6. 
\end{align*}
While $X$ and $Y$ are cyclic, $Z$ is not; however, the Sylow $2$-subgroups of all three are cyclic of order $2$. Indeed
\begin{align*}
    X_2 &= \langle  (0, 9) +H \rangle, \\ 
    Y_2 &= \langle  (9, 0) +H \rangle, \\
     Z_2 &= \langle  (9, 9) +H \rangle.
\end{align*}
Clearly $G_2 = X_2 \oplus Y_2 = X_2 \oplus Z_2 =Y_2 \oplus Z_2 $ and thus $ K, L, M$ do not share a common fundamental domain in $\ZZ^2$,  by \autoref{thm:Lattices}.

\bibliographystyle{amsalpha}
\bibliography{Bibliography}
\end{document}